\def\thm@space@setup{%
  \thm@preskip=\parskip \thm@postskip=0pt
}
\newcommand\blankfootnote[1]{%
  \begingroup
  \renewcommand\thefootnote{}\footnote{#1}%
  \addtocounter{footnote}{-1}%
  \endgroup
}
\let\le\leqslant
\let\ge\geqslant
\let\a\alpha
\let\eps\varepsilon
\let\mc\mathcal
\let\mb\mathbb
\newcommand{\ZZ}{\mathbb{Z}}
\newcommand{\RR}{\mathbb{R}}
\newcommand{\NN}{\mathbb{N}}
\newcommand{\QQ}{\mathbb{Q}}
\newcommand{\CC}{\mathbb{C}}
\newcommand{\FF}{\mathbb{F}}
\newcommand{\EE}{\mathbb{E}}
\newtheorem{lemma}{Lemma}[section]
\newtheorem{proposition}[lemma]{Proposition}
\newtheorem{theorem}[lemma]{Theorem}
\newtheorem{fact}[lemma]{Fact}
\crefname{fact}{Fact}{Facts}
\theoremstyle{definition}
\newtheorem{definition}[lemma]{Definition}
\newtheorem{example}[lemma]{Example}
\newtheorem{remark}[lemma]{Remark}
\newtheorem*{remark*}{Remark}
\DeclareMathOperator{\Span}{span}
\DeclareMathOperator{\codim}{codim}
\newcommand{\prob}[1]{\mathbb{P}\left[#1\right]}
\newcommand{\probs}[2]{\mathbb{P}_{#1}\left[#2\right]}
\let\originalleft\left
\let\originalright\right
\renewcommand{\left}{\mathopen{}\mathclose\bgroup\originalleft}
\renewcommand{\right}{\aftergroup\egroup\originalright}
\title{Geometric Littlewood--Offord problems via lattice point counting}
\author{
Alexandr Grebennikov\footnote{Institute of Science and Technology Austria (ISTA), \href{mailto:aleksandr.grebennikov@ist.ac.at}{\nolinkurl{aleksandr.grebennikov@ist.ac.at}}.}
\and 
Matthew Kwan\footnote{Institute of Science and Technology Austria (ISTA), 
\href{mailto:matthew.kwan@ist.ac.at}{\nolinkurl{matthew.kwan@ist.ac.at}}.}
}
\begin{document}

    \maketitle

    \begin{abstract}
Consider nonzero vectors $a_{1},\dots,a_{n}\in\mathbb{C}^{k}$, independent
Rademacher random variables $\xi_{1},\dots,\xi_{n}$, and a set $S\subseteq\mathbb{C}^{k}$.
What upper bounds can we prove on the probability that the random
sum $\xi_{1}a_{1}+\dots+\xi_{n}a_{n}$ lies in $S$? We develop a
general framework that allows us to reduce problems of this type
to counting lattice points in $S$. We apply this framework with known results from diophantine geometry to prove various bounds when $S$ is a set of points in convex position, an algebraic variety,
or a semialgebraic set. In particular, this resolves conjectures of
Fox--Kwan--Spink and Kwan--Sauermann.

We also obtain some corollaries for the \emph{polynomial Littlewood--Offord
problem}, for polynomials that have bounded \emph{Chow rank} (i.e.,
can be written as a polynomial of a bounded number of linear forms).
For example, one of our results confirms a conjecture of Nguyen and Vu in the special case of polynomials with bounded Chow rank: if a bounded-degree polynomial
$F\in\mathbb{C}[x_{1},\dots,x_{n}]$ has bounded Chow rank and ``robustly
depends on at least $b$ of its variables'', then $\mathbb{P}[F(\xi_{1},\dots,\xi_{n})=0]\le O(1/\sqrt{b})$. We also prove significantly stronger bounds when $F$ is ``robustly
irreducible'', towards a conjecture of Costello.
    \end{abstract}

    \blankfootnote{Both authors are supported by ERC Starting Grant ``RANDSTRUCT'' No.~101076777.}

    \section{Introduction}
    Throughout this paper $\xi_1, \ldots, \xi_n$ will always denote a sequence of independent Rademacher random variables (that is, taking values $1$ or $-1$ with probability $1/2$).
    
    In 1943, motivated by their study of random algebraic equations, Littlewood and Offord \cite{littlewood-offord-43} considered the following question: given a sequence of $n$ nonzero real numbers $c_1, \ldots, c_n$, what is the maximum probability that the random variable $\xi_1 c_1 + \ldots + \xi_n c_n$ equals a given value\footnote{Here and for the rest of this introduction, we specialise to the ``discrete'' form of the Littlewood--Offord problem. (There is also a ``small-ball'' form, where one assumes that the $c_i$ have absolute value at least 1, and studies the likelihood that $\xi_1 c_1 + \ldots + \xi_n c_n$ falls in a given interval of radius 1.)}? They proved an upper bound of the form $O(\log n / \sqrt{n})$, which was sharpened by an elegant argument of Erd\H{o}s \cite{erdos-45} to the following precise result (now called the Erd\H{o}s--Littlewood--Offord theorem):
    \begin{equation}
    \sup_{z \in \RR}\prob{\xi_1 c_1 + \ldots + \xi_n c_n = z} \le 2^{-n}\binom{n}{\lfloor n/2 \rfloor} =\left(\sqrt{2/\pi} + o(1)\right)\frac{1}{\sqrt{n}}= O\left(\frac1{\sqrt n}\right).\label{eq:LO}
    \end{equation}
    Since then, the Erd\H{o}s--Littlewood--Offord theorem has been generalised in many different directions, and these results have found applications in a wide variety of  different fields (e.g.,\ random matrix theory, the theory of Boolean functions, extremal combinatorics; see \cite{nguyen-survey-13} for a survey). In this paper we introduce a general approach to attack several different questions in Littlewood--Offord theory of a ``geometric'' flavour.

    \subsection{Polynomial Littlewood--Offord problem}

    First, a natural direction of generalisation is to replace the linear form $\xi_1 c_1 + \ldots + \xi_n c_n$ by a polynomial $F(\xi_1, \ldots, \xi_n)$ of higher degree. This direction was first considered by Rosi\'nski and Samorodnitsky \cite{RS-96}, Costello, Tao and Vu \cite{costello-tao-vu-06}, and Razborov and Viola \cite{razborov-viola-13} in the contexts of L\'evy chaos, discrete random matrices and Boolean functions, respectively. 

It is widely believed that if $F$ is an $n$-variable degree-$d$
polynomial that is ``robustly nonzero'' then a bound
analogous to \cref{eq:LO} should hold. For example, it was conjectured by Nguyen and Vu (see \cite{meka-nguyen-vu-16, razborov-viola-13}) that if $F$ has at least $bn^{d-1}$ nonzero coefficients, then\footnote{Subscripts on asymptotic notation indicate quantities that should be treated as constants.}
\begin{equation}
\prob{F(\xi_{1},\ldots,\xi_{n})=0}\le O_{d}\left(\frac1{\sqrt{b}}\right).\label{eq:poly-LO}
\end{equation}
This is known to hold for $d=1$ (thanks to the Erd\H os--Littlewood--Offord
theorem) and for $d=2$ (thanks to recent work of Kwan and Sauermann~\cite{kwan-sauermann-23}).
For general $d$, the best available bound (due to Meka, Nguyen and
Vu~\cite{meka-nguyen-vu-16}, via a result of Kane~\cite{kane-14}) falls short 
of \cref{eq:poly-LO} by a factor of $(\log b)^{O_d(1)}$.
    
    If true, the bound in \cref{eq:poly-LO} is best-possible: for example,
    one can see this by considering the polynomial $(x_1+\dots+x_n)^d$. However, it is natural to wonder whether one can prove much stronger bounds if one makes assumptions to rule out this kind of example. Indeed, it was conjectured by Costello \cite{costello-13} that \cref{eq:poly-LO} can be significantly improved when the polynomial $F$ is ``robustly irreducible''. Though his original conjecture was recently disproved by Kwan, Sah and Sawhney (see \cite[Appendix B]{JKSW}), it is plausible that the following ``repaired'' version of his conjecture
still holds: consider any polynomial $F\in\CC[t_{1},\ldots,t_{n}]$
of degree $d\ge2$, and suppose that for any reducible\footnote{In this paper, we use the convention that the zero polynomial is reducible.} polynomial
$G$ of degree at least $d$, the difference $F-G$ has at least $bn^{d-1}$ nonzero  coefficients. 
Then, \[\prob{F(\xi_1,\dots,\xi_n)=0}\le O_{d,\eps}(b^{-1+\eps}).\]
We remark that a slight variation on this
``repaired'' conjecture was also suggested by Jin, Kwan, Sauermann and Wang~\cite{JKSW}; they observed that this bound, if true, would be best possible.

As our first results in this paper, we essentially resolve the above
conjectures, under an assumption that $F$ has ``bounded complexity''.
Formally, for a polynomial $F\in\FF[t_{1},\ldots,t_{n}]$ of degree
$d$ (over some field $\FF\subseteq\CC$), define its \textit{Chow
rank (over $\FF$)} to be the minimal number $c$ such that $F$ can
be represented as $\sum_{i=1}^{c}P_{i}$, where each $P_{i}$ is a
product of $d$ (not necessarily homogeneous) linear forms with coefficients
in $\FF$. One can check that the Chow rank of any homogeneous polynomial
of a fixed degree $d$ is ``equivalent'' to the tensor rank of its coefficient
tensor (in the sense that each of them is bounded by some function
of the other).
    
    \begin{theorem} \label{polynomials-all}
        Let $1 \le b \le n$ and $d, c \ge 1$ be integers. Let $F \in \CC[t_1, \ldots, t_n]$ be a polynomial of degree $d$ and Chow rank at most $c$.
        \begin{enumerate}
            \item[(1)] 
            Suppose that $F$ ``robustly depends on at least $b$ of its variables'', in the sense that it is not possible to make $F(\xi_1,\dots,\xi_n)$ identically zero by fixing fewer than $b$ of the $\xi_i$ to $\pm1$ values. (In particular, this holds whenever $F$ has at least $bn^{d-1}$ nonzero degree-$d$ coefficients\footnote{Note that the total number of terms in $F$ with degree less than $d$ is at most $d n^{d-1}$, so these terms can be essentially ignored.}.) 
            Then
            \[
            \prob{F(\xi_1, \ldots, \xi_n) = 0} \le O_{d, c}(b^{-1/2}).
            \] 
            \item[(2)] Suppose that $d \ge 2$ and $d\ne 3$. Also, suppose that for any reducible polynomial $G \in \CC[x_1, \ldots, x_n]$ of degree at most $d$, the difference $F-G$ has at least $bn^{d-1}$ nonzero coefficients.
            Then for any $\eps > 0$,
            \[
            \prob{F(\xi_1, \ldots, \xi_n) = 0} = O_{d, c, \eps}(b^{-1+\eps}).
            \]
        \end{enumerate}
    \end{theorem}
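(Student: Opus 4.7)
My plan is to reduce to the paper's geometric Littlewood--Offord framework and then invoke appropriate lattice-point bounds for algebraic varieties. Using the Chow-rank decomposition $F(t_1,\dots,t_n)=\sum_{i=1}^{c}\prod_{j=1}^{d}L_{ij}(t)$ with affine linear forms $L_{ij}$, I define
\[
\Lambda(\xi)=(L_{ij}(\xi))_{i,j}=v_{0}+\sum_{k=1}^{n}\xi_{k}v_{k}\in\CC^{cd},
\]
for vectors $v_{0},v_{1},\dots,v_{n}$ read off from the coefficients of the $L_{ij}$. Then $F(\xi)=0$ is exactly the event $\Lambda(\xi)\in V$, where
\[
V=\Bigl\{y\in\CC^{cd}:\sum_{i=1}^{c}\prod_{j=1}^{d}y_{ij}=0\Bigr\}
\]
is a fixed hypersurface of degree $d$ in $\CC^{cd}$ depending only on $c$ and $d$. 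This places the problem squarely in the paper's framework, with translated target set $V-v_{0}$ and vectors $v_{1},\dots,v_{n}$.

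For part~(1), the lattice count to plug in is the trivial hypersurface bound: $V$ meets any axis-aligned box of side $R$ in $O_{d,c}(R^{cd-1})$ lattice points. Heuristically, the coordinates of $\Lambda(\xi)$ concentrate on a box of side $O(\sqrt{b})$ once one identifies $b$ ``effective'' coordinates, so the framework should return a probability bound of order $b^{(cd-1)/2}/b^{cd/2}=b^{-1/2}$, matching the target. The delicate translation step is where the robust-dependence hypothesis must enter: I would argue that if fewer than $\Omega_{d,c}(b)$ of the $v_{k}$ satisfied the framework's nondegeneracy condition, then setting the corresponding $\xi_{k}$ to sign values would annihilate $F$, contradicting robust dependence on $b$ variables.

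For part~(2), the hypersurface estimate must be replaced by a Bombieri--Pila-type count of $O_{d,c,\eps}(R^{cd-2+\eps})$ lattice points on the irreducible components of $V$ that $\Lambda(\xi)$ can actually reach; plugging this in yields $O_{d,c,\eps}(b^{-1+\eps})$. To justify using this sharper count, one must transfer robust irreducibility of $F$ into a genuine irreducibility statement for the image variety, or for an appropriate generic slice of $V$ cut out by the affine span of $\Lambda$. I expect this transfer to be the main technical obstacle: coefficient-level ``robust irreducibility'' of $F$ is a priori unrelated to irreducibility of the restriction of $P(y)=\sum_{i}\prod_{j}y_{ij}$ to a subspace, and degenerate Chow decompositions must be ruled out. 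The $d\neq 3$ exclusion most plausibly reflects a failure of this generic-slice irreducibility argument exactly in degree $3$, where linear-times-quadratic factorisations of a cubic offer enough algebraic flexibility to evade any crude dimension-counting reduction.
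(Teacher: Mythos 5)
Your opening reduction (encoding $F(\xi)=0$ as the event that the random walk $\sum_k\xi_k v_k$ hits a degree-$d$ hypersurface in $\CC^{dc}$) is exactly the paper's first step, but both parts of your plan have genuine gaps at the point where the hypotheses on $F$ must be converted into hypotheses the geometric framework can use. For part (1), the framework needs the coefficient vectors to have large basis packing number in the ambient space, and your proposed way of securing this is wrong: it is simply false that if few of the $v_k$ are ``nondegenerate'' then one can annihilate $F$ by fixing few variables (take $F=(t_1+\cdots+t_n)^d$, where all $v_k$ are proportional, so no basis of $\CC^{dc}$ can be formed at all, yet $F$ cannot be killed by any substitution). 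The correct mechanism, which your sketch is missing, is a ``dropping to a subspace'' step: pass to a large subsequence of the $v_k$ lying in a subspace $V'$ with linear basis packing number there, condition on the at most $b/2$ remaining variables, and use robust dependence only to conclude that the \emph{restricted} polynomial on $V'$ is not identically zero. Separately, plugging the trivial $O(R^{cd-1})$ lattice count into the general reduction cannot give the clean $O_{d,c}(b^{-1/2})$: the framework's output carries $(\log b)^{O(1)}$ losses, and when the restricted hypersurface contains codimension-one affine pieces no lattice-count savings are available; the paper instead invokes its Theorem~\ref{varieties-(k-ell)/2}, handling linear components by a Hal\'asz-type theorem (no logs) and higher-degree irreducible components by Pila's bound (polynomial savings absorb the logs).

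For part (2), you correctly identify the transfer from the coefficient-counting hypothesis to irreducibility of the relevant restricted polynomial as the crux, but you leave it unresolved, and a ``generic slice'' argument cannot work because the subspace spanned by the $v_k$ is anything but generic. The paper closes this gap with a short combinatorial argument: fixing at most $b/2$ variables changes fewer than $bn^{d-1}$ coefficients relative to the homogeneous degree-$d$ part of the restricted polynomial $F_*$, so by hypothesis $F_*^{=d}$ is irreducible; since the coefficient vectors span $V'$, this forces the restricted polynomial $f_*$ on $V'$ to be irreducible of degree $d$ and, moreover, not expressible as a polynomial in two linear forms (an irreducible homogeneous top part cannot split into linear forms). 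That last property is essential, because the lattice-point input you need is not a Bombieri--Pila/Pila bound for irreducible varieties (which only gives $R^{cd-2+1/d}$, hence $b^{-1+1/(2d)}$, i.e.\ Theorem~\ref{polynomials-1/d}) but the uniform \emph{affine dimension growth} bound of Vermeulen and Browning--Gorodnik, whose hypothesis is exactly ``irreducible and not a polynomial of two linear forms''. This also shows your explanation of the $d\neq3$ exclusion is incorrect: it has nothing to do with slice irreducibility in degree $3$; it is simply that the uniform affine dimension growth estimate is currently available for $d=2$ and $d\ge4$ but not for $d=3$.
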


We will discuss our proof approach properly later in this introduction, but very briefly: we prove \cref{polynomials-all} via a connection to diophantine geometry.
Specifically, we leverage various known estimates for the number of lattice
points on algebraic varieties; the reason for the ``$d\ne 3$'' exception in \cref{polynomials-all}(2) is that uniform estimates for Heath-Brown's so-called \emph{dimension growth conjecture} are available for affine algebraic varieties of all degrees except 3.

It is worth remarking that related geometric considerations have already
played an important role in the resolution of the quadratic Littlewood--Offord
problem (i.e., the proof of \cref{eq:poly-LO} in the case $d=2$). Indeed, the first
bound of the form $b^{-1/2+o(1)}$, due to Costello~\cite{costello-13}, was proved via
a low-rank/high-rank dichotomy, using geometric techniques (related
to the Szemer\'edi--Trotter theorem) in the low-rank case, and using
completely different ``decoupling'' techniques in the high-rank
case. For Kwan and Sauermann's recent $O(1/\sqrt{b})$ bound~\cite{kwan-sauermann-23}, they
also took a geometric point of view for low-rank quadratic polynomials
(though their proof did not as cleanly split into a low rank and high
rank case). In general, it seems to us (in some very vague sense) that the worst-case behaviour for polynomial anticoncentration is driven by geometric considerations for ``low-complexity polynomials'', and driven by statistical/mixing considerations for ``high-complexity polynomials''.

Regarding the assumptions on $F$ in \cref{polynomials-all}: in the earliest work on the polynomial Littlewood--Offord problem, the usual assumption was that $F$ has many nonzero coefficients (as in the Nguyen--Vu conjecture at the start of the section). It was first observed by Razborov and Viola~\cite{razborov-viola-13}\footnote{The Razborov--Viola assumption only took multilinear degree-$d$ terms into account; the specific assumption in \cref{polynomials-all}(1) was first considered by Kwan and Sauermann~\cite{kwan-sauermann-23}.} that one can state polynomial Littlewood--Offord bounds with a much weaker assumption that $F$ ``robustly depends on many variables'', as in \cref{polynomials-all}(1). One can attempt to restate (the repaired version of) Costello's conjecture with an analogous type of assumption (namely, that it is not possible to make $F$ reducible by fixing fewer than $b$ variables), but such an assumption leads to a different worst-case bound. Indeed, by considering the polynomial $(x_1+\dots+x_{n/2})^d-(x_{n/2+1}+\dots+x_n)$, it is not hard to see that in this setting we cannot hope for a bound stronger than about $b^{-1+1/(2d)}$. We are able to match this lower bound up to logarithmic factors, as follows.

    \begin{theorem} \label{polynomials-1/d}
        Let $2 \le b \le n$ and $d, c \ge 1$ be integers, and let $\FF$ be a subfield of $\CC$. Let $F \in \FF[t_1, \ldots, t_n]$ be a degree-$d$ polynomial which is irreducible (over $\FF$) and has Chow rank at most $c$ (over $\FF$). Suppose that $F$ remains an irreducible degree-$d$ polynomial (over $\FF$) after any substitution of $\pm 1$ values into fewer than $b$ of its variables. Then
        \[
        \prob{F(\xi_1, \ldots, \xi_n) = 0} \le O_{d, c}(b^{-1 + 1/(2d)} (\log b)^{C_{d, c}}),
        \]        
        for some constant $C_{d, c}$ depending only on $d$ and $c$.
    \end{theorem}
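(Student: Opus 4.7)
The plan is to combine the Chow rank decomposition with the paper's general lattice-point-counting framework, fed by dimension-growth estimates for irreducible affine hypersurfaces to produce the $b^{1/(2d)}$ saving. The starting point is the following reduction: since $F$ has Chow rank at most $c$, I can write $F(t)=\sum_{i=1}^{c}\prod_{j=1}^{d}L_{ij}(t)$ and pick a maximal $\FF$-linearly independent subset $L_1,\dots,L_N$ (with $N\le cd$) of the $L_{ij}$. There is then a polynomial $G\in\FF[y_1,\dots,y_N]$ of degree $d$ with $F=G(L_1,\dots,L_N)$; since the $L_k$ are $\FF$-linearly independent, any factorization of $G$ pulls back to one of $F$, so $G$ is irreducible of degree $d$. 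Setting $w_i=(L_1(e_i),\dots,L_N(e_i))\in\FF^N$, the event $\{F(\xi)=0\}$ is exactly the event that the Rademacher sum $\sum_i w_i\xi_i\in\CC^N$ lies on the irreducible affine hypersurface $V:=\{G=0\}$.

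Next I would use the robust-irreducibility hypothesis to set up a conditional framework on subsets of size $b$. For any $I\subseteq[n]$ with $|I|<b$ and any $\epsilon\in\{\pm 1\}^I$, the restricted polynomial $F|_{\xi_I=\epsilon}$ is still irreducible of degree $d$ with Chow rank at most $c$. Applying the reduction above to each such restriction, the conditional problem becomes a Rademacher-sum anticoncentration problem (in the remaining $n-|I|$ variables) against a still-irreducible, same-degree hypersurface. Arranging matters so that exactly $b$ Rademachers remain unfixed, each conditional problem is to bound the probability that a $b$-term Rademacher sum in $\CC^N$ hits a translate of $V$, which is still an irreducible affine hypersurface of degree $d$.

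Now the paper's general framework applies to each such conditional problem: after an integer change of basis and rescaling, it bounds the conditional probability by
\[
b^{-N/2}(\log b)^{O_{d,c}(1)}\cdot \bigl|V'\cap\ZZ^N\cap[-C\sqrt{b},C\sqrt{b}]^N\bigr|,
\]
where $V'$ is the resulting irreducible degree-$d$ hypersurface and the $b^{-N/2}$ prefactor is one Erd\H os--Littlewood--Offord saving per effective direction. By dimension-growth estimates of Heath-Brown, Browning, Salberger, and collaborators (valid for all $d\ge 2$ in the hypersurface setting), an irreducible affine hypersurface of degree $d\ge 2$ in $\AI^N$ has at most $O_{d,N}(H^{N-2+1/d}(\log H)^{C'_{d,N}})$ lattice points in a box of side $H$. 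Plugging in $H\asymp\sqrt{b}$ and averaging the conditional bounds over the fixing gives $\prob{F(\xi)=0}\le O_{d,c}(b^{-1+1/(2d)}(\log b)^{C_{d,c}})$.

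The main obstacle is the translation from robust algebraic irreducibility to the linear-algebraic non-degeneracy that the framework requires. Specifically, after fixing some of the $\xi_i$, the linear forms $L_k$ might become linearly dependent on the remaining variables even while $F|_{\xi_I}$ remains irreducible of degree $d$ via a \emph{new} Chow decomposition with $N'<N$ forms; I would need to argue either that this rank-drop cannot happen too often, or that when it does happen one gains a matching factor from having fewer effective directions. A secondary subtlety is the passage from the complex-valued geometric problem to an integer lattice-point count: this requires clearing denominators and a Minkowski-type rounding argument in a way that preserves the irreducibility and degree of $V$, which I would handle via a standard arithmetic-genericity argument absorbed into the $(\log b)^{C_{d,c}}$ factor.
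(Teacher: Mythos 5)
Your high-level shape (Chow decomposition to a bounded-dimensional hypersurface problem, then a lattice-point count with exponent $N-2+1/d$ at scale $\sqrt{b}$, giving $b^{-1+1/(2d)}$) is the same as the paper's, but two steps of your outline do not work as described. First, the conditioning scheme: you propose to fix variables so that ``exactly $b$ Rademachers remain unfixed'', i.e.\ to fix $n-b$ of them, but the hypothesis only guarantees irreducibility of degree $d$ after fixing \emph{fewer than $b$} variables, so for $n\ge 2b$ nothing controls these restrictions. Moreover, the claimed prefactor $b^{-N/2}$ (``one Erd\H{o}s--Littlewood--Offord saving per effective direction'') would require the $b$ surviving coefficient vectors to robustly span an $N$-dimensional space, which is exactly the unresolved obstacle you flag at the end. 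The paper resolves both points simultaneously: it applies the ``dropping to a subspace'' lemma (\cref{dropping-to-subspace}) to the vectors $a_i$ to extract a subsequence of size at least $n-b/2$ lying in a subspace $V'$ with basis packing number $\Omega_k(b)$ there, conditions only on the at most $b/2$ complementary variables (so the robust-irreducibility hypothesis does apply, and the restricted polynomial $f_*$ on $V'$ is still irreducible of degree $d$ over $\FF$), and then obtains the $\sqrt{b}$-scale equidistribution from the basis packing number via \cref{structure-randomness} and the inverse Littlewood--Offord machinery inside \cref{varieties-1/d} --- not from literally having $b$ unfixed coordinates. In this formulation the rank-drop you worry about costs nothing, since one simply works with the hypersurface cut out by $f_*$ in the (possibly lower-dimensional) space $V'$.

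Second, the field-of-definition issue: the point-counting input you invoke (the bound $H^{N-2+1/d}$, which is Pila's theorem, \cref{Pila}, for \emph{complex-irreducible} affine varieties, rather than dimension growth --- the latter, \cref{affine-dimension-growth}, needs $d\neq 3$ and a non-degeneracy hypothesis and is used by the paper for a different result) does not apply to your $V=\{G=0\}$, because $G$ is only irreducible over $\FF$ and may factor over $\CC$; a $\CC$-component could even be a hyperplane, making the asserted lattice-point bound false for the complex hypersurface. The paper treats this case separately via a Galois-automorphism argument (\cref{galois-trick}): each $\CC$-irreducible factor not proportional to an $\FF$-polynomial has its $\FF$-points contained in a subvariety of codimension at least $2$ and degree at most $d^2$, so \cref{varieties-(k-ell)/2} already gives the stronger bound $O(b^{-1})$ there; Pila is only applied when $f_*$ is irreducible over $\CC$. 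Your ``clearing denominators / arithmetic-genericity'' remark does not address this. With these two repairs your outline becomes the paper's proof, but as written both are genuine gaps.
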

    We remark that \cref{polynomials-1/d} also illustrates that our methods are applicable for polynomials that are reducible over $\CC$ but irreducible over a smaller field $\FF \subseteq \CC$ (provided that certain necessary lattice point enumeration estimates are available).
    
    \subsection{Littlewood--Offord problem for algebraic varieties}
    \label{sec:intro-varieties}
    It turns out that the polynomial Littlewood--Offord problem, under a bounded Chow rank assumption, can be naturally interpreted as a geometric problem in low-dimensional space. Indeed, recall that a polynomial $F \in \FF[t_1, \ldots, t_n]$ of degree $d$ and Chow rank $c$ can be represented as $P_1+\dots+P_c$, where each $P_i$ is a product of $d$ (not necessarily homogeneous)   linear forms. Alternatively, one can write this as
    \[
    F(t_1, \ldots, t_n) = f(L_1(t_1, \ldots, t_n), \ldots,  L_k(t_1, \ldots, t_n)).
    \]
    for $k = dc$, some homogeneous linear forms $L_1, \ldots, L_k$ with coefficients in $\FF$, and a polynomial $f \in \FF[x_1, \ldots, x_k]$. (In fact, the polynomial $f$ obtained this way has a certain specific form, but this turns out not to be useful for us.)
    Now, if we write $a_{ij}\in \mb F$ for the coefficient of $t_j$ in $L_i(t_1,\dots,t_n)$, and let $a_j = (a_{1j}, \ldots, a_{kj}) \in \FF^k$, then 
 the event that $F(\xi_1,\dots,\xi_n)=0$ can be interpreted as the event that $\xi_1 a_1 + \ldots + \xi_n a_n$ lies in the algebraic variety $S=\{x \in \CC^k: f(x) = 0\}\subseteq \mb C^k$. This observation was implicitly leveraged in \cite{costello-13,kwan-sauermann-23}.

In connection with their work on the polynomial Littlewood--Offord problem, Kwan and Sauermann made a general conjecture along these lines \cite[Conjecture 12.1]{kwan-sauermann-23}. Specifically, they conjectured that if one can form at least $b$ disjoint bases of $\mb C^k$ from the vectors $a_1, \ldots, a_n\in \mb C^k$ (this is a measure of how ``robustly'' $a_1,\dots,a_n$ span the space $\mb C^k$), then for any affine algebraic variety $S\in \mb C^k$ of dimension $\ell$ and degree $d$, we have
\[\prob{\xi_1 a_1 + \ldots + \xi_n a_n\in S}\le O_{d, k}(b^{-(k-\ell)/2}).\]
We remark that the $d=1$ case of this conjecture is essentially equivalent to a classical theorem of Hal\'asz~\cite{halasz-77}, and in the course of their resolution of the quadratic Littlewood--Offord problem (for quadratic polynomials of not necessarily bounded Chow rank), Kwan and Sauermann proved this conjecture for quadrics inside affine-linear subspaces \cite[Theorem 4.2]{kwan-sauermann-23}. We prove this conjecture in full generality.

    \begin{theorem} \label{varieties-(k-ell)/2}      
        Let $0 \le \ell \le k$ and $d \ge 1$ be integers. Let $S \subseteq \CC^k$ be a (possibly reducible) affine algebraic variety of dimension at most $\ell$ and degree at most $d$. Consider vectors $a_1, \ldots, a_n \in \CC^k$, and assume that one can form $b$ disjoint bases from them. Then 
        \[
        \prob{\xi_1 a_1 + \ldots + \xi_n a_n \in S} \le O_{d, k}\left(b^{-(k-\ell)/2}\right).
        \]
    \end{theorem}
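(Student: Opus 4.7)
The plan is to combine a vector-valued Halász-type anticoncentration inequality with a lattice-point counting estimate on $S$, realizing the ``reduce to lattice counting'' framework announced in the abstract. Using the $b$ disjoint bases $B_1,\dots,B_b$, I would decompose $X := \sum_i \xi_i a_i$ as $X = X_1 + \dots + X_b + Y$, where $X_i = M_i \varepsilon_i$ is the contribution from $B_i$ (with $M_i$ a $k\times k$ invertible matrix and $\varepsilon_i\in\{\pm 1\}^k$) and $Y$ is the contribution from the leftover vectors. After rescaling so that the $M_i$ have unit operator norm, I would condition on $Y$ and work with $X' := X_1 + \dots + X_b$.

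First, the classical vector-valued Halász inequality~\cite{halasz-77}, applied to $X'$ (which is built from $b$ bases), gives the point-mass bound
\[
\sup_{z\in\CC^k}\prob{X'=z}\le O_k(b^{-k/2}).
\]
Second, Hoeffding's inequality shows $X'$ is concentrated in a ball $B(0,R)$ of radius $R = C_k\sqrt{b}$ with sub-Gaussian tails; a dyadic decomposition into annuli of radii $2^jR$ reduces the task to bounding $\prob{X'\in (S-Y)\cap B(0,2^jR)\mid Y}$ for each $j$, with large-$j$ contributions suppressed by concentration.

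The central step is the lattice-point count on $S$: invoking the Heath-Brown-style dimension-growth theorem, we have $|S'\cap\Lambda\cap B(0,R')|\le O_{d,k}((R')^\ell)$ for any affine variety $S'\subseteq\CC^k$ of dimension $\ell$ and degree $d$ and any suitably spaced lattice $\Lambda$. Setting $R' = C_k\sqrt{b}$ gives $O_{d,k}(b^{\ell/2})$ candidate values of $X'$ inside the relevant ball. Multiplying by the point-mass bound,
\[
\prob{X'\in S-Y\mid Y}\le O_{d,k}(b^{\ell/2})\cdot O_k(b^{-k/2}) = O_{d,k}(b^{-(k-\ell)/2}),
\]
and averaging over $Y$ gives the claimed bound.

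The main obstacle is the lattice-counting step. For generic complex $a_i$, the support of $X'$ need not lie on any lattice (it may be dense in $\CC^k$), so the dimension-growth bound does not apply directly to $\mathrm{supp}(X')\cap S$. I anticipate that the paper's general framework resolves this either via discretization---replacing the $a_i$ by nearby lattice vectors and carefully bounding the approximation error in $\prob{X\in S}$---or via a direct Halász-for-varieties estimate that controls $\prob{X'\in S}$ in terms of $|S\cap\Lambda\cap B|$ for a convenient auxiliary lattice $\Lambda$, sidestepping the need for $X'$ to actually live on $\Lambda$. Either route reduces the theorem to the dimension-growth theorem, together with a suitably robust version of the Halász inequality.
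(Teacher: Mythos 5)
There is a genuine gap, and it is exactly the step you flagged: the counting estimate at the heart of your union bound is not available, and in fact the quantity you want to bound by $O_{d,k}(b^{\ell/2})$ can be exponentially large in $b$. The union bound controls $\prob{X'\in S-Y}$ by $(\text{number of support points of }X'\text{ on }S-Y)\times\sup_z\prob{X'=z}$, but for general real or complex $a_i$ the support of $X'$ has no lattice structure, so no Schwartz--Zippel or dimension-growth input applies to it. Concretely, take $k=2$, $S=\{(x,y):x=0\}$ and $a_i=(1,2^i)$: these vectors form $b=\Theta(n)$ disjoint bases, $\prob{X\in S}=\Theta(b^{-1/2})$ (consistent with the theorem, which is tight here), yet all point probabilities are exponentially small and hence an exponentially large proportion of the $2^{\Theta(b)}$ support points lies on $S$. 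So the proposed count $O_{d,k}(b^{\ell/2})$ is false, and multiplying the true count by the Hal\'asz bound $O_k(b^{-k/2})$ gives nothing. Neither of your two suggested repairs closes this. Discretization fails because membership in $S$ is an exact algebraic condition: perturbing the $a_i$ to nearby lattice vectors can change $\prob{X\in S}$ discontinuously (in the example above, a generic perturbation kills the event entirely), so there is no way to bound the approximation error. And a ``direct Hal\'asz-for-varieties estimate controlling $\prob{X'\in S}$ by a lattice count'' is essentially the theorem itself; making it true is where all the work lies.

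For comparison, the paper's route supplies precisely the missing lattice structure and handles the regime where it cannot exist. When the point probability is polynomially large, the inverse Littlewood--Offord theorem (iterated, \cref{iterated-inverse}) places almost all the $a_i$ in a proper GAP of controlled rank and volume; pulling back along the GAP generators turns the support into genuine integer points in a box whose size is calibrated against $\rho$, and only then does a Schwartz--Zippel/Pila count close the union bound (\cref{lattice-points-reduction}). When the point probability is tiny (the $a_i=(1,2^i)$ situation), no such structure exists, and the paper instead runs the structure/randomness decomposition of \cref{structure-randomness}, proved by iterated decoupling, to reduce to the projection onto a ``structured'' subspace $W$ where the inverse theorem does apply. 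Finally, \cref{varieties-(k-ell)/2} itself is deduced componentwise: irreducible components of degree at least $2$ get the strictly stronger exponent of \cref{varieties-1/d} (via Pila), which absorbs the logarithmic losses, while degree-$1$ components (affine subspaces) are handled by the Ferber--Jain--Zhao refinement of Hal\'asz applied to the quotient map, with no counting at all. Your sketch captures the top-level ``point probability times lattice count'' heuristic, but the inverse-theorem and decoupling machinery that makes it legitimate is the actual content of the proof and is absent from the proposal.
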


    In fact, \cref{varieties-(k-ell)/2} is deduced as a corollary of the following more refined estimate.

    \begin{theorem} \label{varieties-1/d}
        Let $0 \le \ell \le k$ and $d \ge 2$ be integers. Let $S \subseteq \CC^k$ be an irreducible affine algebraic variety of dimension $\ell$ and degree $d$. Consider vectors $a_1, \ldots, a_n \in \CC^k$, and assume that one can form $b \ge 2$ disjoint bases from them. Then 
        \[
        \prob{\xi_1 a_1 + \ldots + \xi_n a_n \in S} \le O_{d, k}\left(b^{-(k-\ell+1-\frac{1}{d})/2} (\log b)^{C_{d, k}}\right), 
        \]
        for some constant $C_{d, k}$ depending only on $d$ and $k$.
    \end{theorem}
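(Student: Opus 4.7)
My plan is to deduce this bound by combining the general lattice-point-counting framework from earlier in the paper with estimates from diophantine geometry.

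First, I would invoke the framework to reduce the probabilistic question to a counting one. Given the $b$ disjoint bases among $a_1, \dots, a_n$, the framework should yield a bound of the shape
\[
\prob{X \in S} \;\le\; O_{k}(1)\cdot b^{-k/2} \cdot N_\Lambda(S, R),
\]
where $X = \xi_1 a_1+\dots+\xi_n a_n$, $R = O(\sqrt b)$, and $N_\Lambda(S, R)$ counts points of $\Lambda \cap S$ inside a ball of radius $R$, for an appropriate lattice $\Lambda \subseteq \CC^k$ adapted to the $a_i$. The factor $b^{-k/2}$ encodes a Hal\'asz-type per-point anticoncentration estimate for $X$, while the lattice point count absorbs the arithmetic structure of $S$.

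Second, I would apply the \emph{dimension growth} bound for irreducible affine varieties, due in various forms to Heath-Brown, Browning, Salberger and others: for an irreducible variety $V \subseteq \AI^k$ of dimension $\ell$ and degree $d \ge 2$,
\[
\#\{x \in V(\ZZ) : \|x\|_\infty \le R\} \;\le\; O_{d, k}\bigl(R^{\ell - 1 + 1/d}\,(\log R)^{C_{d, k}}\bigr).
\]
Plugging $R = O(\sqrt b)$ into this gives $N_\Lambda(S, R) \le O_{d, k}(b^{(\ell - 1 + 1/d)/2}(\log b)^{C_{d, k}})$, and multiplying by $b^{-k/2}$ yields exactly the claimed exponent $-(k - \ell + 1 - 1/d)/2$ together with the logarithmic factor $(\log b)^{C_{d,k}}$.

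The main obstacle is aligning the two ingredients. The dimension growth estimate is classically stated for integer points on varieties over $\QQ$, whereas the framework produces lattice point counts for a general lattice $\Lambda \subseteq \CC^k$ determined by the $a_i$. Since the relevant bound depends only on the degree and dimension of the variety (both $\mathrm{GL}_k(\CC)$-invariant), one should be able to absorb the specific choice of $\Lambda$ into the implicit constants via a linear change of coordinates, or alternatively via a Freiman-isomorphism-type reduction of the $a_i$ to integer vectors of controlled size. A secondary delicate point is the $d = 3$ case, historically the most recalcitrant regime for dimension growth; the required log-lossy bound is by now available for all $d \ge 2$, so it can be applied as a black box.
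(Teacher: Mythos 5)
Your second step is fine: the counting input the paper uses is exactly Pila's theorem (\cref{Pila}), $N_S(B)=O_{d,k}(B^{\ell-1+1/d}(\log B)^{C_d})$ for irreducible $S$, valid for every $d\ge 2$; your worry about $d=3$ is misplaced, since that exception concerns the stronger $B^{k-2+\eps}$ affine dimension growth bound (\cref{affine-dimension-growth}), which is used only for \cref{polynomials-all}(2), not here. Also, the change-of-coordinates issue you flag is handled in the paper by the affine-invariant density function $d_S$ (\cref{def:density}, \cref{slicing}), so that part of your plan is sound.

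The genuine gap is your first step. The inequality $\prob{X\in S}\le O_k(1)\,b^{-k/2}\,N_\Lambda(S,O(\sqrt b))$ for ``an appropriate lattice $\Lambda$ adapted to the $a_i$'' is not something the hypotheses provide, and it is not what the framework proves unconditionally. Hal\'asz gives the per-point bound $\rho(A)\le O_k(b^{-k/2})$, but to convert $\prob{X\in S}$ into (per-point probability)$\times$(lattice point count at radius $O(\sqrt b)$) you need the support of $X$ to be essentially a bounded-rank GAP/lattice box of that size, and the only source of such structure is the inverse Littlewood--Offord theorem (\cref{inverse}), which requires $\rho(A)\ge n^{-C}$. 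That is precisely the hypothesis of \cref{lattice-points-reduction}, i.e.\ the ``concentrated'' case. In general $\rho(A)$ can be exponentially small, the support of $X$ is huge and has no lattice structure, and the union bound gives nothing of the claimed shape; moreover, unlike the convex-position setting, for varieties there is no bound on $\prob{X\in S}$ in terms of $\rho(A)$ alone (see the paper's footnoted example of a line in $\RR^2$ with $a_i=(1,2^i)$). The paper bridges exactly this gap with its most technical machinery: the decomposition $\CC^k=U\oplus W$ into a disordered and a structured subspace (\cref{structure-randomness}), obtained by an iterated decoupling argument (\cref{iterated-decoupling}, \cref{one-step}), after which lattice point counting is applied only to the projection onto $W$ and to a variety $S'\subseteq W$ --- either $\pi_W^{-1}(S')=S$, in which case $d_{S'}=d_S$ and Pila applies, or $S'$ has codimension at least $k-\ell+1$ and Schwartz--Zippel suffices --- while the contribution of $S\setminus\pi_W^{-1}(S')$ is shown to be negligible by decoupling; this is packaged as \cref{varieties-general}, from which the theorem follows. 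Your proposal treats all of this as a black box yielding an intermediate inequality that is never established and whose naive proof fails outside the concentrated regime, so the essential idea of the proof is missing.
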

    We remark that although these results are stated over the field of complex numbers, one can deduce similar results for its smaller subfields 
    (this is necessary for the proof of \cref{polynomials-1/d}). Indeed, for a \textit{real} algebraic set $S_{\RR} \subseteq \RR^k$, one can consider its ``complexification'' $S_{\CC} \subseteq \CC^k$ (that is, the minimal complex variety containing $S_{\RR}$). It satisfies $S_{\CC} \cap \RR^k = S_{\RR}$, and the (complex) dimension of $S_{\CC}$ coincides with the (real) dimension of $S_{\RR}$.
    
    The main ingredient in the proof of \cref{varieties-1/d} is a general theorem (\cref{varieties-general}) estimating probabilities of the form $\prob{\xi_1a_1+\dots+\xi_na_n\in S}$ in terms of a certain ``lattice point density'' of $S$. \cref{varieties-1/d} in particular is proved using an estimate of Pila~\cite{pila-95,pila-96}, but in general one can plug other number-theoretic results about counting lattice points on varieties into \cref{varieties-general} to obtain analogous results (all the theorems mentioned so far are proved in this way). 
    The following example illustrates why the connection to number theory is not surprising in this context. 
    
    \begin{example} \label{example-lattice}
    Consider the standard basis vectors $e_1, \ldots, e_k$ of $\CC^k$, and consider the sequence of vectors $a_1, \ldots, a_{2mk}$ consisting of $2m$ copies of $e_i/2$ for each $i\in \{1,\dots,k\}$ (where $m$ is sufficiently large with respect to $k$). Then each coordinate of the random variable $X := \xi_1 a_1 + \ldots + \xi_{2mk} a_{2mk}$ is equal to $t \in \ZZ$ with probability ${2m \choose m + t} / 2^{2m}$, independently. Thus, by standard estimates on binomial coefficients, $X$ is essentially equidistributed (up to a multiplicative constant factor depending on $k$) over the integer points of the box $[-\sqrt{m}, \sqrt{m}]^k$. Therefore, the probability that $X$ lies in a variety $S \subseteq \CC^k$ is closely related to the proportion of integer points in this box that lie on $S$.
    \end{example}

    We emphasise that the assumption in \cref{varieties-1/d} only guarantees that the vectors $a_1, \ldots, a_n$ ``robustly span $\CC^k$'' as a vector space. In particular, they may be very far from resembling the standard generators of the integer grid. The main goal of this paper is to establish a connection between Littlewood--Offord-type questions and counting lattice points on varieties in this general setting.
    
    \subsection{Littlewood--Offord problem for general sets}

    In the above subsection, we have been discussing probabilities of the form $\prob{\xi_1a_1+\dots+\xi_na_n \in S}$, where $S$ is an algebraic variety. It is natural to wonder whether one can obtain similar upper bounds with more general (or completely different) assumptions on $S$: what are the geometric properties of a set $S$ which ensure that random sums are unlikely to fall in them?

    This general direction was recently initiated by Fox, Kwan and Spink \cite{fox-kwan-spink-23}, who investigated several very general conditions on $S$: namely, the condition that $S$ is a set of points in convex position (i.e., no point in $S$ can be represented as a convex combination of the others), and the condition that $S$ is ``definable with respect to an o-minimal structure'' (this is a very general model-theoretic notion which ensures that $S$ does not have ``infinitely oscillating'' structure).

    First, we discuss the case when $S$ is a set of points in convex position (which includes, in particular, boundaries of strictly convex bodies). In this setting, Fox, Kwan and Spink proved that for any nonzero vectors $a_1, \ldots, a_n$ in $\RR^k$ the probability that $\xi_1 a_1 + \ldots + \xi_n a_n$ lies in $S$ is at most $O_k(n^{-k/2^k})$ \cite[Theorem 1.9(1)]{fox-kwan-spink-23}, and conjectured that the stronger bound $n^{-1/2 + o_k(1)}$ should hold \cite[Conjecture 10.1]{fox-kwan-spink-23}. We show that this is indeed the case, and provide an asymptotically sharp bound.
    
    \begin{theorem} \label{convex-1/2}
        Let $S \subseteq \RR^k$ be a set of points in convex position. Consider arbitrary nonzero vectors $a_1, \ldots, a_n \in \RR^k$. Then, as $k$ is fixed and $n$ tends to infinity,
        \[
        \prob{\xi_1 a_1 + \ldots + \xi_n a_n \in S} \le \left(2\sqrt{2/\pi} + o_k(1)\right)n^{-1/2}.
        \]
    \end{theorem}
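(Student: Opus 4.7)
My strategy is to combine the paper's general framework (\cref{varieties-general}), which reduces Littlewood--Offord probabilities to a lattice-point enumeration on $S$, with a classical theorem of Andrews: any lattice polytope $P\subseteq\RR^k$ of volume $V$ has at most $O_k(V^{(k-1)/(k+1)})$ vertices. The proof splits into a dimension-reduction step, followed by a direct application of the framework in the ``robust spanning'' regime, with the base case $k=1$ supplying the sharp constant.

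I would first reduce to the case where the vectors $a_1,\ldots,a_n$ robustly span $\RR^k$, in the sense that they contain $b=\Omega_k(n/\log n)$ disjoint bases. If not, then by a matroid-theoretic argument (Edmonds' partition theorem applied to the vector matroid of the $a_i$), one can find a proper subspace $V\subsetneq\RR^k$ outside of which only $o_k(n)$ of the $a_i$ lie. Conditioning on the Rademacher signs of these ``outliers'' fixes a translate $w\in\RR^k$ and reduces the question to bounding $\prob{\sum_{a_i\in V}\xi_i a_i\in (S-w)\cap V}$. Since convex position is preserved under translation and under intersection with affine subspaces, $(S-w)\cap V$ is in convex position in $V\cong\RR^{\dim V}$, so we may recurse on $\dim V<k$. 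The base case $k=1$ is immediate: a convex-position set on a line has at most two points, and the Erd\H{o}s--Littlewood--Offord theorem yields
\[\prob{X\in S}\le 2\cdot 2^{-n}\binom{n}{\lfloor n/2\rfloor}=\left(2\sqrt{2/\pi}+o(1)\right)/\sqrt n,\]
which is precisely where the sharp constant $2\sqrt{2/\pi}$ appears.

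In the remaining ``robust spanning'' case ($k\ge 2$ with $b=\Omega_k(n/\log n)$ disjoint bases), \cref{varieties-general} bounds $\prob{X\in S}$ by essentially the maximum density of lattice points of $S$ within a box of side $N\sim\sqrt b$ in a suitable lattice built from the $a_i$ (cf.\ \cref{example-lattice}). Any subset of $S$ inherits convex position, so by Andrews' theorem there are at most $O_k(N^{k(k-1)/(k+1)})$ such points, and dividing by the $\sim N^k$ total lattice points in the box yields $\prob{X\in S}=O_k((n/\log n)^{-k/(k+1)})=o_k(n^{-1/2})$, since $k/(k+1)>1/2$ for $k\ge 2$. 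Thus in this regime the bound is strictly stronger than the target, and the sharp constant is really driven by the 1-dimensional base case.

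The main obstacle I foresee is the bookkeeping in the dimension-reduction step: one must choose the disjoint-bases threshold precisely enough that the outlier count is truly $o_k(n)$ (rather than only $O_k(n)$), so that conditioning on the outlier signs inflates the probability bound by only a factor of $1+o_k(1)$. A multiplicative constant slippage here would destroy the sharp leading constant $2\sqrt{2/\pi}$, so the recursion in $k$ must be set up to accumulate only an additive $o_k(1)$ error.
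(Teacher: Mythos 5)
Your overall architecture---drop to a subspace at the cost of only $o_k(n)$ discarded vectors, extract the sharp constant $2\sqrt{2/\pi}$ from the one-dimensional base case via the Erd\H{o}s--Littlewood--Offord theorem, and beat $n^{-1/2}$ outright in dimensions $\ell\ge 2$ by lattice-point counting against Andrews' theorem---is exactly the paper's route (the paper does the reduction in one shot with \cref{dropping-to-subspace} and then invokes \cref{convex-position}). However, the step where you actually bound the probability in the robustly-spanning case has a genuine gap: you invoke \cref{varieties-general}, but that theorem applies only to algebraic varieties of bounded degree, and a set of points in convex position is in general not such a variety (the proof of \cref{varieties-general} runs through irreducible components, B\'ezout's theorem and the decomposition of \cref{structure-randomness}, all of which use the algebraic structure of $S$). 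The paper's reduction to lattice-point density that works for an \emph{arbitrary} set $S$ is \cref{lattice-points-reduction}, and it carries the extra hypothesis $\rho(A)\ge n^{-C}$: the inverse Littlewood--Offord theorem only produces the GAP/box picture of \cref{example-lattice} when the point concentration is polynomially large. In your robustly-spanning case $\rho(A)$ can perfectly well be exponentially small, and then neither \cref{varieties-general} nor \cref{lattice-points-reduction} is applicable as stated.

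The missing ingredient is the treatment of this spread-out regime. The paper handles it with the Fox--Kwan--Spink bound $\rho(A,S)\le O_k\left(\rho(A)^{1/(k2^{k-1})}\right)$ (\cref{spread-out}): when $\rho(A_0)<m^{-k2^{k-1}}$ this already gives $O_k(m^{-1})$, and only in the complementary concentrated case does one run the lattice-point argument you describe. Equivalently, in your $\ell\ge 2$ branch you could simply cite \cref{convex-position}, which packages both cases and gives $O_\ell\big(b^{-\ell/(\ell+1)}(\log b)^{C_\ell}\big)$; with $b=\Omega_k(n/\log n)$ and $\ell\ge 2$ this is $o_k(n^{-1/2})$, exactly as you computed. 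With that substitution your recursion, the $o_k(n)$ bookkeeping of discarded vectors, and the one-dimensional base case are all sound and essentially coincide with the paper's proof.
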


    The following example shows that this bound is asymptotically sharp: let $a \in \RR^k$ be an arbitrary nonzero vector, and take $S = \{-a, a\}$, $a_1 = a_2 = \ldots = a_{2n+1} = a$. However, this example is essentially one-dimensional.
    We obtain a stronger bound under the assumption that the vectors $a_1, \ldots, a_n$ ``robustly span $\RR^k$'' for $k \ge 2$ (and use it to deduce \cref{convex-1/2}).

    \begin{theorem} \label{convex-position}
        Let $S \subseteq \RR^k$ be a set of points in convex position. Consider vectors $a_1, \ldots, a_n \in \RR^k$, and assume that one can form $b \ge 2$ disjoint bases from them. Then 
        \[
        \prob{\xi_1 a_1 + \ldots + \xi_n a_n \in S} \le O_{k}\big(b^{-1+1/(k+1)} (\log b)^{C_k}\big)
        \]
        for some constant $C_k$ depending only on $k$.
    \end{theorem}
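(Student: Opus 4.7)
The plan is to combine the general lattice-point-counting framework alluded to in the paper (the forthcoming \cref{varieties-general}) with a classical bound of Andrews on the number of lattice points in convex position inside a box. This matches the general strategy used for \cref{varieties-1/d}, except that the number-theoretic input is a purely combinatorial/convex-geometric estimate rather than a bound from diophantine geometry.

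First I would perform a linear change of coordinates to normalise the $b$ disjoint bases. Fix any one of the bases, call it $v_1,\dots,v_k$, and apply the invertible linear map $T$ sending $v_i\mapsto e_i$. Since $T$ is invertible and affine, the image $S':=T(S)\subseteq\RR^k$ is still in convex position, and the image vectors $a_i':=T(a_i)$ still admit $b$ disjoint bases (one of them being $e_1,\dots,e_k$). The random variable $X=\sum \xi_i a_i$ transforms to $X'=\sum \xi_i a_i'$, which now takes values in the lattice $\ZZ^k$ in its ``standard'' coordinates, and $\prob{X\in S}=\prob{X'\in S'}$.

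Next I would invoke the general framework: as illustrated by \cref{example-lattice}, the $b$ disjoint bases force the distribution of $X'$ to behave, up to a Gaussian tail, like a mixing random walk on the lattice inside a cube of side $\Theta(\sqrt{b})$, so that each lattice point receives mass of order $b^{-k/2}$. Consequently \cref{varieties-general} should yield an inequality of the form
\[
\prob{X'\in S'}\;\le\; O_k\!\left(b^{-k/2}\sum_{\substack{N\ \text{dyadic}\\ N\le C\sqrt{b\log b}}} \#\{x\in S'\cap \ZZ^k:\|x\|_\infty\le N\}\cdot w_N\right),
\]
where the weights $w_N$ decay fast enough (Gaussian in $N/\sqrt{b}$) that only $O(\log b)$ scales effectively contribute. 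This reduces the problem to bounding the number of lattice points of $S'$ inside $[-N,N]^k$ for $N\lesssim \sqrt{b\log b}$.

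Finally I would apply Andrews' theorem: a set of lattice points in convex position in $\RR^k$ contained in $[-N,N]^k$ has size at most $O_k\bigl(N^{k(k-1)/(k+1)}\bigr)$, since the convex hull of such a set is a lattice polytope of volume $\le(2N)^k$ whose vertices are exactly these points. Substituting $N\lesssim \sqrt{b\log b}$, summing over the $O(\log b)$ dyadic scales, and combining with the $b^{-k/2}$ prefactor gives
\[
\prob{X\in S}\le O_k\!\left(b^{-k/2}\cdot b^{k(k-1)/(2(k+1))}\,(\log b)^{C_k}\right)=O_k\!\left(b^{-k/(k+1)}(\log b)^{C_k}\right),
\]
which rearranges to the claimed $O_k(b^{-1+1/(k+1)}(\log b)^{C_k})$. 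The main obstacle is conceptual rather than computational: one must verify that the general framework, which in the earlier theorems is applied to algebraic varieties, applies verbatim to an arbitrary set in convex position, i.e.\ that the only property of $S$ the framework actually uses is an upper bound on $|S\cap[-N,N]^k\cap\ZZ^k|$. Assuming this, the remaining steps are a direct plug-in of Andrews' theorem.
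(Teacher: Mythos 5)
There is a genuine gap, and it sits exactly at the point you flag as ``conceptual rather than computational''. Mapping one of the $b$ bases to $e_1,\dots,e_k$ does \emph{not} place $X'=\sum_i\xi_i a_i'$ on the lattice $\ZZ^k$: only the $k$ vectors of that one basis become integer vectors, and the remaining $a_i'$ can have arbitrary real coordinates. Consequently the central heuristic of your second step --- that the $b$ disjoint bases force $X'$ to spread like a lattice random walk in a cube of side $\Theta(\sqrt b)$ with each lattice point receiving mass $\sim b^{-k/2}$ --- is simply false in general (take vectors of the form $(1,2^i)$: the point masses are exponentially small and the distribution has no lattice structure at scale $\sqrt b$). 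The reduction to counting lattice points is not automatic; in the paper it is purchased with the inverse Littlewood--Offord machinery (\cref{inverse,iterated-inverse}, packaged as \cref{lattice-points-reduction}), which produces the GAP/lattice structure \emph{only under the hypothesis} $\rho(A)\ge n^{-C}$. Moreover, the result you propose to invoke, \cref{varieties-general}, cannot serve as a substitute: it is proved only for algebraic varieties of bounded degree (its proof runs through irreducible components, B\'ezout and the decoupling decomposition of \cref{structure-randomness}), and a set of points in convex position is not such an object; the general-set statement \cref{lattice-points-reduction} is the right tool, but it carries the concentration hypothesis your argument never verifies.

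Your proposal therefore covers, in effect, only the ``concentrated'' half of the problem, and there your endgame is exactly the paper's: at scale $B\approx\sqrt{b\log b}$, Andrews' bound (\cref{Andrews}) gives $d_S(B)\le O_k(B^{-2k/(k+1)})$, and the exponent arithmetic $-k/2+k(k-1)/(2(k+1))=-k/(k+1)=-1+1/(k+1)$ is correct. What is missing is the complementary ``spread-out'' case $\rho(A_0)<m^{-k2^{k-1}}$ (with $A_0$ the $m=bk$ vectors of the bases), where no lattice structure is available at all; the paper closes this case with the Fox--Kwan--Spink bound $\rho(A_0,S)\le O_k(\rho(A_0)^{1/(k2^{k-1})})$ (\cref{spread-out}), which crucially exploits convex position. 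To repair your argument you would need to add this dichotomy: apply \cref{spread-out} when $\rho$ is tiny, and apply \cref{lattice-points-reduction} (not \cref{varieties-general}) together with \cref{Andrews} when $\rho\ge m^{-k2^{k-1}}$.
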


    In the current work, we do not pursue the most general situation when $S \subseteq \RR^k$ is a set ``definable with respect to an o-minimal structure''\footnote{It does seem to be possible to adapt our methods to this setting by combining them with the tools from \cite{fox-kwan-spink-23}, but this would require us to introduce and explain various concepts from o-minimal geometry, which are outside the scope of the present paper.}. Instead, we highlight the special case of semialgebraic sets: sets defined by a collection of polynomial equations and inequalities. When $S$ is a semialgebraic set which does not contain a line segment, Fox, Kwan and Spink proved that $\prob{\xi_1 a_1 + \ldots + \xi_n a_n \in S}\le n^{-1/2} (\log n)^{O_k(1)}$ \cite[Theorem 1.5]{fox-kwan-spink-23}. We observe that our approach allows us to remove the logarithmic factor.

    \begin{theorem} \label{semialgebraic}
        Let $S \subseteq \RR^k$ be a semialgebraic set, which does not contain a line segment. Consider arbitrary nonzero vectors $a_1, \ldots, a_n \in \RR^k$. Then
        \[
        \prob{\xi_1 a_1 + \ldots + \xi_n a_n \in S} \le O_S(n^{-1/2}).
        \]
    \end{theorem}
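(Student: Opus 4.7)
My plan is to deduce \cref{semialgebraic} from \cref{convex-1/2} via a semialgebraic decomposition of $S$ into a bounded number of pieces in convex position. Specifically, I would first establish the following decomposition claim: any semialgebraic set $S\subseteq\RR^k$ with no line segment can be written as $S=S_1\cup\cdots\cup S_N$ where $N$ depends only on $S$ (not on $n$ or on the $a_i$), and each $S_i$ is in convex position. Granted this, \cref{convex-1/2} applied to each $S_i$ yields $\prob{\xi_1a_1+\cdots+\xi_na_n\in S_i}\le(2\sqrt{2/\pi}+o_k(1))\,n^{-1/2}$, and summing over the $N=O_S(1)$ pieces gives $\prob{\xi_1a_1+\cdots+\xi_na_n\in S}\le O_S(n^{-1/2})$, as required. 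Note that this reduction is the natural one for the ``nonzero vectors'' hypothesis, since \cref{convex-1/2} already caters to it (no assumption of many disjoint bases is needed).

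For the decomposition claim I would use semialgebraic cell decomposition (cylindrical algebraic decomposition or Whitney stratification): $S$ breaks up into a bounded number of smooth semialgebraic cells, and since $S$ contains no line segment, no cell is full-dimensional, so every cell has dimension at most $k-1$. Each such cell is then further subdivided into finitely many pieces on which, after a suitable semialgebraic choice of coordinates, it is the graph of a strictly convex or strictly concave semialgebraic function; the subdivision is governed by the sign pattern of the Hessian of the defining function, which is itself semialgebraic of complexity bounded by that of $S$. A strictly convex (or concave) graph lies on the boundary of a convex body and so is in convex position, completing the decomposition. For $k=1$ this reduces to the trivial observation that a finite set can be partitioned into singletons; for $k=2$ it is the classical fact that a semialgebraic curve decomposes into $O_S(1)$ strictly convex arcs at its inflection points.

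The main obstacle will be carrying out the last step fully rigorously in higher dimensions: a semialgebraic cell is only locally a graph, and one must argue that the global strictly-convex pieces produced are genuinely in convex position and not merely locally so. This requires coordinating the local graph representations across the cell, but since all relevant auxiliary sets — sign loci of Hessians, critical sets of projections, etc. — remain semialgebraic of complexity bounded in terms of $S$, the number of pieces stays $O_S(1)$. Assuming this bookkeeping goes through, the bound $O_S(n^{-1/2})$ follows directly, with no logarithmic loss, since \cref{convex-1/2} is itself logarithm-free.
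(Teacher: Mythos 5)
Your reduction to \cref{convex-1/2} hinges on the decomposition claim that a semialgebraic set with no line segment splits into $O_S(1)$ pieces in convex position, and this claim is false once $k\ge 3$. Consider the surface patch $\Sigma=\{(x,y,x^2-y^4): x\in[0,1],\ y\in[1,2]\}\subseteq\RR^3$. It is semialgebraic and contains no line segment (along any segment, $y$ must be constant for degree reasons, and then $z=x^2-\mathrm{const}$ cannot be affine in a nonconstant $x$), yet the Hessian of $f(x,y)=x^2-y^4$ is $\mathrm{diag}(2,-12y^2)$, which is \emph{indefinite at every point} of the patch. So the mechanism you propose --- subdividing by the sign pattern of the Hessian into strictly convex or strictly concave graphs --- produces nothing here: there is no subregion on which $f$ is convex or concave. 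Worse, no decomposition of any kind can work: if $S_1\subseteq\Sigma$ is in convex position, then $S_1$ lies on the boundary of the convex body $K=\mathrm{conv}(S_1)$, and at an Alexandrov point of $\partial K$ which is a density point of $S_1$ the two surfaces share a tangent plane and $\partial K$ has a positive semidefinite second-order expansion there, while $\Sigma$ has a strictly negative second-order term in some tangent direction; points of $S_1$ arbitrarily close to that direction (which exist by positive density) then violate $f=g$ to second order. Hence every convex-position subset of $\Sigma$ has $2$-dimensional measure zero, and no finite (indeed no countable) union of such sets can cover $\Sigma$. Your plan is sound for $k=2$ (a segment-free semialgebraic curve does split into boundedly many strictly convex/concave arcs), which is presumably why it looks plausible, but the theorem is claimed for all $k$ and the convex-position route cannot reach saddle-type pieces of dimension $\ge 2$.

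For comparison, the paper avoids convexity entirely: after the ``dropping to a subspace'' step (\cref{dropping-to-subspace}), it observes that a segment-free semialgebraic set has semialgebraic dimension at most $k-1$, hence its complex Zariski closure is a proper variety of dimension at most $k'-1$ (\cref{semialgebraic-dimension}), and then applies \cref{varieties-(k-ell)/2} with codimension at least $1$ to get the clean $O_S(n^{-1/2})$ bound. If you want to salvage a convexity-based argument, you would at minimum need a substitute for \cref{convex-1/2} that applies to sets of bounded ``algebraic complexity'' rather than sets in convex position, which is essentially what the variety machinery of the paper provides.
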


    This bound is sharp up to a multiplicative constant factor, but can be further improved under the assumption that the vectors $a_1, \ldots, a_n$ ``robustly span a high-dimensional subspace'', see \cref{semialgebraic-improved}.
    
    \subsection{Organization of the paper}

    This paper is organized as follows. In \cref{sec:outline} we provide a high-level outline of the proofs of \cref{convex-position,varieties-1/d}. In \cref{sec:notation-preliminaries} we introduce notation that will be used throughout the paper, and prove several preliminary lemmas. In \cref{sec:lattice-point-counting}, we prove a convenient intermediate result (\cref{lattice-points-reduction}) that reduces the case of ``polynomial point probabilities'' to lattice point counting. Next, in \cref{sec:convex}, we present the proofs of the results for sets of points in convex position (\cref{convex-position,convex-1/2}).

    For the rest of the paper (\cref{sec:algebraic-preliminaries,,sec:decoupling,,sec:main-proofs}) we turn to the algebraic setting. In \cref{sec:algebraic-preliminaries} we review several useful ingredients from algebraic geometry and number theory. \cref{sec:decoupling} contains our most technical theorem (\cref{structure-randomness}), which provides a decomposition of the ambient vector space into subspaces via an iterative decoupling procedure. Finally, in \cref{sec:main-proofs} we prove our general result for the algebraic setting (\cref{varieties-general}), and use it to deduce \cref{polynomials-all,,polynomials-1/d,,varieties-1/d,,varieties-(k-ell)/2,,semialgebraic}.

    \textit{Basic notation.} For a positive integer $n$, we write $[n]$ to denote the set $\{1, \ldots, n\}$.
    Our use of asymptotic notation is standard: for functions $f=f(n)$ and $g=g(n)$, we write $f=O(g)$ to mean that there is a constant $C$ such that $|f|\le C|g|$, $f=\Omega(g)$ to mean that there is a constant $c>0$ such that $f(n)\ge c|g(n)|$ for sufficiently large $n$, $f=\Theta(g)$ to mean that $f=O(g)$ and $f=\Omega(g)$, and $f=o(g)$ to mean that $f/g\to0$ as $n\to\infty$. Subscripts on asymptotic notation indicate quantities that should be treated as constants.
    All logarithms are assumed to be in base $2$.

    \textbf{Acknowledgements.} We would like to thank Tim Browning and Matteo Verzobio for helpful comments and references from the number theory literature.

    \section{Proof outline}\label{sec:outline}

    In this section we provide a high-level sketch of the proofs of \cref{convex-position} for sets of points in convex position and \cref{varieties-1/d} for algebraic varieties (all our other main results are either deduced from one of these two theorems, or from the intermediate lemmas in their proofs).
    
    The proofs of \cref{convex-position} and \cref{varieties-1/d} are based on similar ideas, though the proof of \cref{convex-position} is simpler.

    \subsection{Sets of points in convex position}

    To prove \cref{convex-position} we need to obtain an upper bound on the probability that $X := \xi_1 a_1 + \ldots + \xi_n a_n$ lies in our set $S$ of points in convex position. 
    We separately treat the cases when 
    \[
    \rho := \sup_{x \in \RR^k} (X = x) < n^{-C}
    \] 
    and when $\rho \ge n^{-C}$ (for some appropriately chosen $C$ depending on the dimension $k$). 
    
    \textbf{The ``spread-out'' case ($\rho < n^{-C}$). } In this case, we just apply a result of Fox, Kwan and Spink \cite[Theorem 1.9(2)]{fox-kwan-spink-23} which implies that
    \[
    \prob{X \in S} \le O_k(\rho^{1/(k2^{k-1})}).
    \]
    That is to say, the probability of lying in $S$ can be bounded in terms of the maximum \emph{point} concentration probability. This directly implies the conclusion of \cref{convex-position}, if $C$ is sufficiently large. The proof of the above bound (in \cite{fox-kwan-spink-23}) is based on a reduction to a K\H ov\'ari--S\'os--Tur\'an-type theorem in an auxiliary hypergraph, and some simple combinatorial consequences of the fact that the points in $S$ lie in convex position.
    
    \textbf{The ``concentrated'' case ($\rho \ge n^{-C}$). } In this second case, we take advantage of the \emph{inverse theory} for the linear Littlewood--Offord problem. Roughly speaking, the philosophy of this theory is that the only way for $\rho$ to be large is for the coefficients $a_1,\dots,a_n$ to have strong additive structure.
    
    Specifically, our main tool will be the ``optimal inverse theorem'' for the linear Littlewood--Offord problem, proved by Nguyen and Vu (\cite[Theorem 2.5]{nguyen-vu-11}, stated below as \cref{inverse}). It says that if $\rho\ge n^{-C}$, then almost all of the vectors $a_1, \ldots, a_n$ are contained in a common \emph{generalized arithmetic progression} (``GAP'' for short; see \cref{def:GAP}), whose rank is bounded in terms of $C$, and whose volume depends in an ``optimal'' way on $\rho$.

    In \cref{iterated-inverse}, we show how to \emph{iterate} this optimal inverse theorem, to prove that in fact the random variable $X = \xi_1 a_1 + \ldots + \xi_n a_n$ is \emph{approximately equidistributed} in a certain GAP of bounded rank. In other words, if we eliminate\footnote{To ``eliminate'' a vector $a_i$ just means to fix an outcome of the corresponding random variable $\xi_i$, and work in the resulting conditional probability space.} a few ``exceptional'' vectors $a_i$, then up to an affine-linear transformation, we can think of $X$ as being approximately the uniform distribution on the integer points in a box of the form $[-B_1,B_1]\times\dots\times[-B_q,B_q]$.

    As a result, the problem of upper-bounding $\prob{X \in S}$ reduces to the problem of upper-bounding the number of integer points in a box $[-B_1,B_1]\times\dots\times[-B_q,B_q]$ which lie in a certain affine-linear transformation of $S$. For this, we can take advantage of classical estimates in discrete geometry (in particular, we use an estimate due to Andrews~\cite{andrews-63}, stated in this paper as \cref{Andrews}).

    The full details of the proof of \cref{convex-position} appear in \cref{sec:convex}.

    \subsection{Algebraic varieties}
    To prove \cref{varieties-1/d} we need to obtain an upper bound on the probability that $X := \xi_1 a_1 + \ldots + \xi_n a_n$ lies in our algebraic variety $S$.
    
    If we try to proceed via the same dichotomy as for the proof of \cref{convex-position}, the ``concentrated'' case ($\rho \ge n^{-C}$) works in exactly the same way: the only change is that Andrews' theorem should be replaced by a result of Pila \cite{pila-95,pila-96} (stated in this paper as \cref{Pila}), counting integer points on an affine algebraic variety.
    
    Unfortunately, we encounter some issues in the ``spread-out'' case ($\rho < n^{-C}$). Recall that for \cref{convex-position} we used a result of Fox, Kwan and Spink bounding $\prob{X\in S}$ in terms of $\rho$. Fox, Kwan and Spink also proved a similar result that can be applied to algebraic varieties (\cite[Theorem 1.14]{fox-kwan-spink-23}), but it requires the additional assumption that the variety $S$ does not contain any affine lines. In general, without such an assumption on $S$ there is no nontrivial bound\footnote{For example, suppose that $S$ is the line $\{(x,y):x=0\}\subseteq \RR^2$, and suppose $a_1, \ldots, a_n\in \RR^d$ are defined by $a_i = (1, 2^i)$. Then $\xi_1 a_1 + \ldots + \xi_n a_n$ lies in $S$ with probability $\Theta(n^{-1/2})$, while $\rho$ is exponentially small.} on $\prob{X\in S}$ in terms of $\rho$.
    
    Therefore, we take a different point of view. Instead of separately considering two extreme cases, our argument can be seen as an \emph{interpolation} between these two cases. Namely, in \cref{structure-randomness} we obtain a decomposition of the ambient vector space $\CC^k$ into a direct sum $U \oplus W$ of a ``disordered'' subspace $U$ and a ``structured'' subspace $W$ (where the decomposition is chosen with respect to the sequence of vectors $a_1, \ldots, a_n$ and the variety $S$).

    Roughly speaking, the property we will guarantee for our ``structured'' subspace $W$ is that, after eliminating a few ``exceptional'' vectors $a_i$, the \emph{projection of $X$ onto $W$} concentrates on some point with polynomially large probability (at least $n^{-C}$ for some constant $C$). Let $\pi_W: \CC^k = U \oplus W \to W$ be the projection map, and let $S'$ be the maximal subset of $W$ such that we have $\pi_W^{-1}(S') \subseteq S$. Then $X$ lies in $\pi_W^{-1}(S')$ if and only if its projection $\pi_W(X)$ lies in $S'$, so $\prob{X \in \pi_W^{-1}(S')}$ can be estimated using the same approach as for the ``concentrated'' case described in the previous subsection (replacing Andrews' theorem with Pila's theorem, as described at the beginning of this subsection).
    
    By construction of $S'$, knowing the value of the projection $\pi_W(X)$ \emph{cannot} allow us to conclude that $X$ lies in $S \setminus \pi_W^{-1}(S')$: this always depends on the ``disordered'' coordinate of $X$ (corresponding to the subspace $U$) as well. 
    So, the property we will guarantee for our ``disordered'' subspace $U$ is simply that $X$ is very unlikely to lie in $S \setminus \pi_W^{-1}(S')$. Together with the above considerations, this gives the desired upper bound on the probability that $X$ lies in $S$.

    The proof of \cref{structure-randomness} is by an iterative procedure: we begin with $U = \CC^k$, $W = \{0\}$, and then repeatedly enlarge $W$ while keeping it ``structured'' (shrinking $U$ correspondingly). At each step of this procedure, we use a \emph{decoupling} argument (\cref{iterated-decoupling}), which relates the probability that $X = \xi_1 a_1 + \ldots + \xi_n a_n$ lies in a variety $S$ with the probability that it lies in certain linear subspaces. We refer the reader to the discussion in \cref{sec:decoupling} for more details.

    \section{Notation and preliminaries}\label{sec:notation-preliminaries}
    
    Let $A = (a_1, \ldots, a_n)$ be a sequence of vectors in a finite-dimensional vector space $V$ (over some subfield $\FF$ of $\CC$). We note that the order of the vectors $a_1, \ldots, a_n$ is irrelevant for us in this work, and the word ``sequence'' is used as a synonym for the word ``multiset''.

    For a subset $I \subseteq [n]$, $I = \{i_1, \ldots, i_m\}$ we define the subsequence $A[I] = (a_{i_1}, \ldots, a_{i_m})$. We also say that $A'$ is a subsequence of $A$ of size $m$ if there exists $I \subseteq [n]$, $|I| = m$ such that $A' = A[I]$. 

    We define the \textit{basis packing number} of a sequence $A$ to be the maximum number of disjoint bases of $V$ one can form from the vectors of $A$. Formally, the basis packing number of $A$ is the largest integer $b$ for which there exist $b$ pairwise disjoint subsets $I_1, \ldots, I_b \subseteq [n]$ such that for each $1 \le j \le b$ the subsequence $A[I_j]$ is a basis of $V$.

\begin{definition}\label{def:rho}
        Define the maximum point probability $\rho(A)$ by
    \[
    \rho(A) = \sup_{x \in V} \prob{\xi_1 a_1 + \ldots + \xi_n a_n = x}.
    \]
    More generally, for any set $S \subseteq V$ we define the maximum $S$-translate probability $\rho(A,S)$ by
    \[
    \rho(A, S) = \sup_{x \in V} \prob{\xi_1 a_1 + \ldots + \xi_n a_n \in S + x},
    \]
    where $S + x = \{s + x : s \in S\}$.
\end{definition}

    In these terms, $\rho(A) = \rho(A, \{0\})$. This general definition turns out to be convenient for us, due to the following simple properties.

    \begin{fact} \label{rho(A S)-1}
        Let $S$ be a subset of $V$. If $A'$ is a subsequence of $A$ then $\rho(A, S) \le \rho(A', S)$.
    \end{fact}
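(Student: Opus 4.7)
The plan is to condition on the Rademacher variables corresponding to the indices omitted when passing from $A$ to $A'$. Write $A = (a_1,\dots,a_n)$ and $A' = A[I]$ for some $I \subseteq [n]$, and let $J = [n] \setminus I$. For any $x \in V$, the law of total probability (conditioning on $(\xi_j)_{j\in J}$) gives
\[
\prob{\sum_{i=1}^n \xi_i a_i \in S + x} = \frac{1}{2^{|J|}}\sum_{\varepsilon \in \{-1,1\}^J} \prob{\sum_{i\in I}\xi_i a_i \in S + x - \sum_{j\in J}\varepsilon_j a_j}.
\]
Each term on the right-hand side is bounded by $\rho(A', S)$, since it is the probability that the random sum over $A'$ lies in a translate of $S$. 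Averaging, we conclude that $\prob{\sum_{i=1}^n \xi_i a_i \in S+x} \le \rho(A', S)$, and taking the supremum over $x$ yields $\rho(A, S) \le \rho(A', S)$.

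This is an entirely routine conditioning argument, so there is no real obstacle to overcome; the only thing to verify is that the independence of the $\xi_i$'s makes the conditioning valid, which is immediate. The same one-line argument would also work if one replaced Rademacher variables by any independent random variables, which is a useful sanity check.
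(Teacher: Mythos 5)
Your proposal is correct and is essentially the same argument as in the paper: both condition on the variables outside $I$ (the paper conditions on the sum $Y=\sum_{j\notin I}\xi_j a_j$, you enumerate the outcomes explicitly) and bound each conditional probability by the supremum over translates, i.e.\ by $\rho(A',S)$.
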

    \begin{proof}
        Let $A' = A[I]$, and fix $x \in V$. Denote $X = \sum_{i \in I} \xi_i a_i$, $Y = \sum_{i \notin I} \xi_i a_i$. Conditioning on the outcome of $Y$, we have
        \[
        \prob{X + Y \in S - x} = \EE_{Y}\Big[\prob{X \in S - x - Y \mid Y}\Big] \le \sup_{z \in V} \prob{X \in S - z} = \rho(A', S).\qedhere
        \]
    \end{proof}
    
    \begin{fact} \label{rho(A S)-2}
        Let $S_1, S_2$ be two subsets of $V$. Then 
        \[
        \max(\rho(A, S_1), \rho(A, S_2)) \le \rho(A, S_1 \cup S_2) \le \rho(A, S_1) + \rho(A, S_2).
        \]
    \end{fact}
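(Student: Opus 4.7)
The statement is a standard monotonicity-plus-union-bound fact, so the plan is essentially to unwind the definitions. Let $X = \xi_1 a_1 + \ldots + \xi_n a_n$.

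For the left-hand inequality, I would observe that $S_1 \subseteq S_1 \cup S_2$, so for every $x \in V$ we have $S_1 + x \subseteq (S_1 \cup S_2) + x$, and therefore $\prob{X \in S_1 + x} \le \prob{X \in (S_1 \cup S_2) + x} \le \rho(A, S_1 \cup S_2)$. Taking the supremum over $x \in V$ yields $\rho(A, S_1) \le \rho(A, S_1 \cup S_2)$, and the same argument with $S_2$ in place of $S_1$ gives $\rho(A, S_2) \le \rho(A, S_1 \cup S_2)$.

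For the right-hand inequality, I would fix $x \in V$ and use $(S_1 \cup S_2) + x = (S_1 + x) \cup (S_2 + x)$, combined with the elementary union bound, to obtain
\[
\prob{X \in (S_1 \cup S_2) + x} \le \prob{X \in S_1 + x} + \prob{X \in S_2 + x} \le \rho(A, S_1) + \rho(A, S_2).
\]
Taking the supremum over $x$ on the left-hand side gives the claimed bound.

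There is no real obstacle here; the only thing to be careful about is that the translate $x$ on the right-hand side of the union bound can be chosen the same for both sets (which is why the translate definition of $\rho(A, \cdot)$ behaves well with respect to unions). Both parts are one-line arguments once the definitions are unfolded.
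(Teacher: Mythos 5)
Your proof is correct and follows essentially the same route as the paper: the first inequality by monotonicity of translate probabilities under set inclusion, the second by the union bound applied to a common translate $x$ before taking the supremum. If anything, your write-up is slightly more careful than the paper's, whose displayed line contains a typo ($\cap$ where $\cup$ is meant).
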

    \begin{proof}
        The first inequality holds trivially. For the second one, denote $X = \xi_1 a_1 + \ldots + \xi_n a_n$. Then for any $x \in V$
        \[
        \prob{X \in (S_1 \cup S_2) - x} = \prob{X \in (S_1 - x) \cap (S_2 - x)}\le \sup_{x \in V} \prob{X \in S_1 - x} + \sup_{x \in V} \prob{X \in S_2 - x}. \qedhere
        \]
    \end{proof}
    
    \begin{fact} \label{rho(A S)-3}
        Let $\pi: V \to U$ is a surjective linear map. Then for any sequence $A = (a_1, \ldots, a_n)$ of vectors in $V$, and any set $S \subseteq U$, we have
        \[
        \rho(\pi(A), S) = \rho(A, \pi^{-1}(S)).
        \]
    \end{fact}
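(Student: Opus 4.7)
The approach is to reduce both sides to the same probability statement using linearity of $\pi$ and the translation-equivariance of preimages. The key algebraic observation is that for any $y \in U$ and any $x_0 \in V$ with $\pi(x_0) = y$, one has
\[
\pi^{-1}(S + y) = \pi^{-1}(S) + x_0,
\]
since $\pi(v) \in S + y$ is equivalent to $\pi(v - x_0) \in S$, i.e.\ $v - x_0 \in \pi^{-1}(S)$. Combined with linearity, which gives $\pi(\xi_1 a_1 + \ldots + \xi_n a_n) = \xi_1 \pi(a_1) + \ldots + \xi_n \pi(a_n)$, this yields for every $y \in U$ and any preimage $x_0$ of $y$ the identity
\[
\prob{\xi_1 \pi(a_1) + \ldots + \xi_n \pi(a_n) \in S + y} = \prob{\xi_1 a_1 + \ldots + \xi_n a_n \in \pi^{-1}(S) + x_0}.
\]

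From here both inequalities follow immediately by taking suprema. For $\rho(\pi(A), S) \le \rho(A, \pi^{-1}(S))$, for each $y \in U$ I pick an arbitrary preimage $x_0$ (using surjectivity of $\pi$) and observe that the right-hand side is bounded above by $\rho(A, \pi^{-1}(S))$; taking the supremum over $y$ gives the claim. For $\rho(A, \pi^{-1}(S)) \le \rho(\pi(A), S)$, for each $x \in V$ I set $y = \pi(x)$ and apply the identity with $x_0 = x$ to bound the right-hand side by $\rho(\pi(A), S)$; taking the supremum over $x$ closes the argument. There is no genuine obstacle here: the only point requiring any care is making sure the surjectivity of $\pi$ is used (to guarantee preimages exist for every $y$), and that the translation identity $\pi^{-1}(S + y) = \pi^{-1}(S) + x_0$ is stated with the right choice of $x_0$.
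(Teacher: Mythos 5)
Your proof is correct and follows essentially the same route as the paper: both rest on the pointwise equivalence $\pi(X) \in S + \pi(x_0) \iff X \in \pi^{-1}(S) + x_0$ (via linearity of $\pi$), followed by taking suprema, with surjectivity ensuring every translate of $S$ in $U$ is accounted for. Your write-up just makes the two directions and the role of surjectivity more explicit than the paper's one-line verification.
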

    \begin{proof}
        One can verify that for any outcomes of $\xi_1, \ldots, \xi_n$ and any $x \in V$ we have
        \[
        \xi_1\pi(a_1) + \ldots + \xi_n\pi(a_n) + \pi(x) \in S \quad \text{ if and only if } \quad \xi_1 a_1 + \ldots + \xi_n a_n + x \in \pi^{-1}(S).
        \]
        The statement then follows by taking the supremum over $x \in V$ of the probabilities of both these events.
    \end{proof}

    We also record the following ``dropping to a subspace'' lemma, which also appeared in \cite{kwan-sauermann-23} (in a slightly different form). Roughly speaking, it says that any sequence of nonzero vectors in a vector space $V$ contains a large subsequence, which has linear basis packing number inside a possibly smaller subspace $V' \subseteq V$.
    
    \begin{lemma}\label{dropping-to-subspace}
        Let $n, k$ and $b$ be positive integers, such that $n - (b-1) k(k+1)/2 > 0$. Consider a sequence of nonzero vectors $A = (a_1, \ldots, a_n)$ in a vector space $V$ of dimension $k$. Then there exists a linear subspace $V' \subseteq V$, and a subsequence $A'$ of $A$ of size at least $n - (b-1) k(k+1)/{2}$, such that all elements of $A'$ lie in $V'$, and the basis packing number of $A'$ (as a sequence in $V'$) is at least $b$.
    \end{lemma}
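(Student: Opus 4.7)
I would prove this by induction on $k$. The base case $k = 1$ is immediate: the assumption $n > (b-1)\cdot 1 \cdot 2/2$ forces $n \ge b$, and every nonzero vector in a one-dimensional space is itself a basis, so we can take $V' = V$ and $A' = A$.

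For the inductive step $k \ge 2$, the strategy is a greedy basis-extraction. Starting from $A$, I repeatedly select $k$ vectors forming a basis of $V$ and set them aside, continuing as long as the remaining vectors still span $V$. Let $t$ denote the number of bases extracted. If $t \ge b$, we are done with $V' = V$ and $A' = A$. Otherwise $t \le b - 1$, and the $n - tk$ remaining vectors span only a proper subspace $W \subseteq V$ of some dimension $j \le k - 1$. Let $A'' = A[\{i : a_i \in W\}]$; since $A''$ contains all the remaining vectors, $|A''| \ge n - tk \ge n - (b-1)k$. Using $n > (b-1)k(k+1)/2$ together with $j(j+1) \le k(k-1)$, one checks that
\[
|A''| \ge n - (b-1)k \;>\; (b-1)\tfrac{k(k+1)}{2} - (b-1)k \;=\; (b-1)\tfrac{k(k-1)}{2} \;\ge\; (b-1)\tfrac{j(j+1)}{2},
\]
so the inductive hypothesis applies to $A''$ inside $W$. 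It yields a subspace $V' \subseteq W$ and a subsequence $A' \subseteq A''$ with basis-packing number $\ge b$ inside $V'$ and $|A'| \ge |A''| - (b-1)j(j+1)/2$. Combining the two lower bounds gives
\[
|A'| \ge n - (b-1)k - (b-1)(k-1)k/2 \;=\; n - (b-1)k(k+1)/2,
\]
which is exactly the bound claimed.

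There is no serious obstacle in this argument; it hinges only on the elementary observation that if no basis of $V$ can be found among the leftover vectors then those vectors must lie in a proper subspace, and on the telescoping identity $(b-1)k + (b-1)(k-1)k/2 = (b-1)k(k+1)/2$ which makes the size loss add up perfectly. The one subtle point worth flagging is that greedy basis-extraction is \emph{not} guaranteed to achieve the maximum possible packing (it may stop with $t < b$ even when $b$ disjoint bases of $V$ exist in $A$), but this is harmless: the lemma permits passing to a proper subspace $V' \subsetneq V$ rather than insisting on $V' = V$, so the inductive case handles this situation automatically.
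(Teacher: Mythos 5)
Your proof is correct and takes essentially the same route as the paper's: induction on the dimension, removing disjoint bases until the leftover vectors lie in a proper subspace, and applying the induction hypothesis there, with the same telescoping count $(b-1)k + (b-1)(k-1)k/2 = (b-1)k(k+1)/2$ controlling the size loss. The only cosmetic differences are that the paper works with the maximum basis packing number instead of a greedy extraction and places the leftovers in a subspace of dimension exactly $k-1$ rather than their span, neither of which changes the argument.
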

    \begin{proof}
        We argue by induction on $k$. In the case $k = 1$ each of $n \ge b$ vectors forms a one-element basis.
        
        Let $b'$ be the basis packing number of $A$. If $b' \ge b$, then we are done. Otherwise, let $I_1, \ldots, I_{b'}$ be the disjoint sets of indices corresponding to $b' \le b-1$ bases. Then the subsequence $A_0 = A[[n] \setminus (I_1 \cup \ldots \cup I_{b'})]$ of size $n - b'k$ does not contain a basis of $V$. Therefore, its vectors lie in a linear subspace $V_0 \subsetneq V$ of dimension $k-1$.        
        As
        \[
        n - b'k - (b-1)\frac{(k-1)k}{2} \ge n - (b-1)\frac{k(k+1)}{2} > 0, 
        \]
        we can apply the induction hypothesis to the sequence $A_0$ in the vector space $V_0$ to obtain the desired subsequence.
    \end{proof}

    Note that $\FF$ (which is a subfield of $\CC$) contains the integers. Much of our analysis will focus on counting lattice points within subsets of $\FF^k$, so we introduce the following notation.

    \begin{definition} \label{def:density}
    For a set $S \subseteq \FF^k$ and a real number $B \ge 0$, we define the integer point counting function as
    \[
    N_S(B) = |\{(x_1, \ldots, x_k) \in \ZZ^k \cap S : |x_i| \le B \text{ for } 1 \le i \le k\}|,
    \]
    and the integer points density function as
    \[
    d_S(B) = \sup_{\varphi} \left(\frac{N_{\varphi(S)}(B)}{|\ZZ^k \cap [-B, B]^k|}\right) = \sup_{\varphi}\left(\frac{N_{\varphi(S)}(B)}{(2\lfloor B\rfloor+1)^k}\right),
    \]
    where the supremum is taken over all bijective affine-linear maps $\varphi:\FF^k \to \FF^k$.
    \end{definition}
           
    Although slightly non-standard, this definition of $d_S(B)$ is convenient for our purposes. Note that $d_S(B)$ is invariant under bijective affine-linear transformations of $S$. Therefore, it does not depend on the choice of basis in $\FF^k$, and makes sense for a set $S$ in an abstract finite-dimensional vector space $V$ over $\FF$.
        
    Furthermore, we observe that it is also invariant under taking preimages of projections.

    \begin{proposition} \label{slicing}
        Let $\psi: \FF^r \to \FF^k$ be a surjective affine-linear map. Then for any set $S \subseteq \FF^k$ and any $B \ge 0$ we have
        \[
        d_{\psi^{-1}(S)}(B) = d_S(B).
        \]
    \end{proposition}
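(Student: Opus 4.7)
The plan is to establish both inequalities $d_{\psi^{-1}(S)}(B) \ge d_S(B)$ and $d_{\psi^{-1}(S)}(B) \le d_S(B)$. First, since $d_\bullet(B)$ is invariant under bijective affine-linear transformations of the ambient space, and since pre- or post-composing $\psi$ with bijective affine-linear maps does not affect either side of the claimed equality, one may reduce without loss of generality to the case where $\psi: \FF^r \to \FF^k$ is the projection onto the first $k$ coordinates, so that $\psi^{-1}(S) = S \times \FF^{r-k}$.

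The lower bound is then immediate: given any bijective affine-linear $\varphi' : \FF^k \to \FF^k$, extend it by the identity on the remaining coordinates to $\tilde\varphi(x, y) = (\varphi'(x), y)$. Then $\tilde\varphi(\psi^{-1}(S)) = \varphi'(S) \times \FF^{r-k}$, so a lattice point $(x, y) \in \ZZ^r \cap [-B, B]^r$ lies in this set if and only if $x \in \varphi'(S)$, with $y$ arbitrary. Hence $N_{\tilde\varphi(\psi^{-1}(S))}(B) = N_{\varphi'(S)}(B) \cdot (2\lfloor B\rfloor + 1)^{r-k}$; dividing by $(2\lfloor B\rfloor + 1)^r$ and taking the supremum over $\varphi'$ gives $d_{\psi^{-1}(S)}(B) \ge d_S(B)$.

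For the reverse inequality, I would take an arbitrary bijective affine-linear $\varphi : \FF^r \to \FF^r$ with $\varphi^{-1}(y) = Ay + b$, and let $A_1$ denote the first $k$ rows of $A$ and $b_1$ the first $k$ entries of $b$, so that $\varphi(\psi^{-1}(S)) = \{y \in \FF^r : A_1 y + b_1 \in S\}$. Since $A$ is invertible, $A_1$ has rank $k$, so one can choose an index set $I \subseteq [r]$ with $|I| = k$ such that the submatrix $C := (A_1)_I \in M_k(\FF)$ is invertible; set $J = [r] \setminus I$ and $D := (A_1)_J$. The key step is a decoupling by conditioning on the ``free'' coordinates $y_J$: for each fixed $y_J \in \ZZ^{r-k} \cap [-B,B]^{r-k}$, the condition $A_1 y + b_1 \in S$ becomes $y_I \in C^{-1}(S - Dy_J - b_1)$, which is the image of $S$ under a bijective affine-linear self-map of $\FF^k$. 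By the definition of $d_S(B)$, the number of valid $y_I \in \ZZ^k \cap [-B,B]^k$ is at most $d_S(B)\cdot (2\lfloor B\rfloor + 1)^k$. Summing over the $(2\lfloor B\rfloor + 1)^{r-k}$ choices of $y_J$ yields $N_{\varphi(\psi^{-1}(S))}(B) \le d_S(B)\cdot (2\lfloor B\rfloor + 1)^r$, as required. There is no serious obstacle here: the whole argument is elementary once one notices that selecting $k$ linearly independent columns of $A_1$ decomposes the $r$-dimensional lattice point count into a Cartesian-product-like sum of $k$-dimensional counts, each bounded directly by $d_S(B)$.
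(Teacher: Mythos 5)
Your proof is correct and takes essentially the same route as the paper: the lower bound rests on normalizing $\psi$ to the coordinate projection via bijective affine-linear changes of variables (the paper does this per comparison map rather than as an upfront WLOG, but it is the same fact), and your upper bound---choosing $k$ columns of $A_1$ forming an invertible submatrix and summing over the complementary integer coordinates---is exactly the paper's slicing argument (choosing $k$ standard basis vectors whose span meets $\ker(\psi\circ\varphi)$ trivially) written in matrix language. No gaps.
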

    \begin{proof}
        By replacing $B$ with its integer part $\lfloor B \rfloor$, we can assume that $B$ is a non-negative integer. 
        
        First, we prove the inequality $d_{\psi^{-1}(S)}(B) \ge d_S(B)$.
        Given a bijective affine-linear map $\varphi_1:\FF^k \to \FF^k$, there exists a bijective affine-linear map $\varphi_2:\FF^r \to \FF^r$ such that $\varphi_1 \circ \psi \circ \varphi_2 = p$, where $p:\FF^r \to \FF^k$ is the projection onto the first $k$ coordinates. Then 
        \[
        \varphi_2^{-1}(\psi^{-1}(S)) = p^{-1}(\varphi_1(S)),  
        \] 
        and therefore 
        \[
        (2B+1)^r d_{\psi^{-1}(S)}(B) \ge N_{\varphi_2^{-1}(\psi^{-1}(S))}(B) = N_{p^{-1}(\varphi_1(S))}(B) = (2B+1)^{r-k} N_{\varphi_1(S)}(B).
        \]
        Taking the supremum over $\varphi_1$ gives the desired inequality.
        
        Next, we prove the converse inequality $d_{\psi^{-1}(S)}(B) \le d_S(B)$. Let  $e_1, \ldots, e_r$ be the standard basis vectors of $\FF^r$. Consider a bijective affine-linear map $\varphi_2:\FF^r \to \FF^r$. The kernel of $\psi \circ \varphi_2$ has dimension $r-k$, thus we can choose $k$ standard basis vectors $e_{j_1}, \ldots, e_{j_k}$ such that the subspace $W$ spanned by them has trivial intersection with this kernel. 
                
        Then the restriction of $\varphi_2 \circ \psi$ to each translate of $W$ is a bijective affine-linear map. We consider ``slices'' of the box $[-B, B]^r$ by the translates of $W$, and bound the number of integer points on each of them in terms of $d_S(B)$.

        Let $J = \{j_1, \ldots, j_k\} \subseteq [r]$. Then
        \begin{align*}
        N_{\varphi_2^{-1}(\psi^{-1}(S))}(B) &= \Big|\Big\{(c_j)_{j \in [r]} \in \ZZ^r : |c_j| \le B, \; \sum_{j \in [r]} c_j e_j \in \varphi_2^{-1}(\psi^{-1}(S))\Big\}\Big| \\
        &= \sum_{\substack{(c_j)_{j \in [r] \setminus J} \in \ZZ^{r-k},\\ |c_j| \le B}} \Big|\Big\{(c_j)_{j \in J} \in \ZZ^k : |c_j| \le B, \; \sum_{j \in J} c_j e_j \in W \cap \Big(\varphi_2^{-1}(\psi^{-1}(S)) - \sum_{j \in [r] \setminus J}c_j e_j\Big)\Big\}\Big| \\
        &= \sum_{\substack{(c_j)_{j \in [r] \setminus J} \in \ZZ^{r-k},\\ |c_j| \le B}} \Big|\Big\{(c_j)_{j \in J} \in \ZZ^k : |c_j| \le B, \; \sum_{j \in J} c_j e_j \in \varphi_{(c_j)}^{-1}(S)\Big\}\Big|,
        \end{align*}
        where $\varphi_{(c_j)}: W \to \FF^k$ is the bijective affine-linear map defined by $\varphi_{(c_j)}(w) = \psi(\varphi_2(w + \sum_{j \in [r] \setminus J}c_j e_j$)). 
        Recalling the definition of the density function $d_S$, we conclude that 
        \[
        N_{\varphi_2^{-1}(\psi^{-1}(S))}(B) \le \sum_{\substack{(c_j)_{j \in [r] \setminus J} \in \ZZ^{r-k},\\ |c_j| \le B}} N_{\varphi_{(c_j)}^{-1}(S)}(B) \le (2B+1)^{r-k} \cdot (2B+1)^k d_S(B) = (2B+1)^r d_S(B).
        \]
        Taking the supremum over $\varphi_2$ implies that $d_{\psi^{-1}(S)}(B) \le d_S(B)$, completing the proof.
    \end{proof}

    We also note that the value of $d_S(B)$ does not ``jump too much'' when $B$ changes: namely, if $B_1$ and $B_2$ differ by at most a multiplicative constant factor, then the same holds true for $d_S(B_1)$ and $d_S(B_2)$.

    \begin{proposition} \label{no-jumps}
        Suppose that $0 \le B_1 \le B_2 \le cB_1$ for some $c \ge 1$. Then for any set $S \subseteq \FF^k$ we have
        \[
        \frac{1}{(3c)^k} d_S(B_1) \le d_S(B_2) \le 2^k d_S(B_1).
        \]        
    \end{proposition}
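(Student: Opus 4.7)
The plan is to pass to integer scales and then prove each direction by an elementary box-counting argument. Directly from \cref{def:density}, $d_S(B)=d_S(\lfloor B\rfloor)$, since both $N_{\varphi(S)}(B)$ and the normalisation $(2\lfloor B\rfloor+1)^k$ depend on $B$ only through $\lfloor B\rfloor$. Writing $B_1'=\lfloor B_1\rfloor$ and $B_2'=\lfloor B_2\rfloor$, the hypothesis $B_2\le cB_1$ yields the integer bound $B_2'\le cB_1'+c$, and $B_1\le B_2$ gives $B_1'\le B_2'$; so it suffices to prove the two inequalities at these integer scales.

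For the upper bound, given $\eps>0$ I would pick a bijective affine-linear $\varphi\colon\FF^k\to\FF^k$ with $N_{\varphi(S)}(B_2')\ge (d_S(B_2')-\eps)(2B_2'+1)^k$. Setting $M=\lceil(2B_2'+1)/(2B_1'+1)\rceil$, I cover a superset of $\{-B_2',\ldots,B_2'\}^k$ by $M^k$ pairwise disjoint integer translates of the box $\{0,\ldots,2B_1'\}^k$. By pigeonhole, some such translate $v+\{0,\ldots,2B_1'\}^k$ (with $v\in\ZZ^k$) contains at least $N_{\varphi(S)}(B_2')/M^k$ integer points of $\varphi(S)$. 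Composing $\varphi$ with the shift by $-v-(B_1',\ldots,B_1')$ (which is an integer vector, hence lies in $\FF^k$) produces a bijective affine-linear $\varphi'$ that moves the chosen sub-box onto $\{-B_1',\ldots,B_1'\}^k$, so
$$d_S(B_1')\ge \frac{N_{\varphi'(S)}(B_1')}{(2B_1'+1)^k}\ge \frac{(d_S(B_2')-\eps)(2B_2'+1)^k}{M^k(2B_1'+1)^k}.$$
The elementary estimate $M(2B_1'+1)\le (2B_2'+1)+(2B_1'+1)\le 2(2B_2'+1)$ (using $B_1'\le B_2'$) then gives $d_S(B_2')\le 2^k d_S(B_1')$ upon sending $\eps\to 0$.

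For the lower bound, given $\eps>0$ I would pick $\varphi$ with $N_{\varphi(S)}(B_1')\ge (d_S(B_1')-\eps)(2B_1'+1)^k$. Since $B_1'\le B_2'$, the same $\varphi$ satisfies $N_{\varphi(S)}(B_2')\ge N_{\varphi(S)}(B_1')$, and hence
$$d_S(B_2')\ge (d_S(B_1')-\eps)\biggl(\frac{2B_1'+1}{2B_2'+1}\biggr)^k.$$
A direct computation from $B_2'\le cB_1'+c$ yields $2B_2'+1\le c(2B_1'+1)+(c+1)\le (2c+1)(2B_1'+1)$, so $(2B_2'+1)/(2B_1'+1)\le 2c+1\le 3c$ for $c\ge 1$; letting $\eps\to 0$ then gives $d_S(B_1')\le (3c)^k d_S(B_2')$.

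The argument is essentially elementary, so I do not anticipate any real conceptual obstacle. The only mild subtleties are keeping the shift in the upper-bound step integer-valued (so that $\varphi'$ is defined over $\FF$ and thus admissible in the supremum defining $d_S$), and carefully tracking the constants that come from the ceiling function and from the passage to integer parts.
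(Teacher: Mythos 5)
Your proof is correct and takes essentially the same approach as the paper: the lower bound via monotonicity of $N_{\varphi(S)}$ together with the ratio estimate $(2\lfloor B_2\rfloor+1)/(2\lfloor B_1\rfloor+1)\le 3c$, and the upper bound via covering the large box by integer translates of the small box. Your pigeonhole-plus-near-optimal-$\varphi$ phrasing (with the integer shift composed into $\varphi$) is only a cosmetic variant of the paper's argument, which instead bounds $N_{\varphi(S)}(B_2)$ for every $\varphi$ by summing over the covering boxes.
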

    \begin{proof}
        The first inequality follows by observing that $N_{\varphi(S)}(B_1) \le N_{\varphi(S)}(B_2)$ and 
        \[
        2\lfloor B_2 \rfloor + 1 \le 2B_2+1 \le c(2B_1 + 1) \le 3c (2\lfloor B_1 \rfloor + 1).
        \]
        For the second inequality, we cover the integer points of the box $[-B_2, B_2]^k$ by $M = \big\lceil(2\lfloor B_2\rfloor+1)/(2\lfloor B_1\rfloor + 1)\big\rceil^k$ boxes with side lengths $2\lfloor B_1 \rfloor$ centered at points with integer coordinates. Then for any bijective affine-linear map $\varphi: \FF^k \to \FF^k$ we have 
        \[
        N_{\varphi(S)}(B_2) \le M \cdot \sup_{x \in \ZZ^k}N_{\varphi(S)-x}(B_1) \le M \cdot (2 \lfloor B_1 \rfloor+1)^k d_S(B_1) \le (2(2\lfloor B_2 \rfloor+1))^k d_S(B_1).
        \]
        Thus, $N_{\varphi(S)}(B_2)/(2 \lfloor B_2 \rfloor + 1)^k \le 2^k d_S(B_1)$, and taking the supremum over $\varphi$ completes the proof.
    \end{proof}

    \section{Reduction to lattice point counting}
    \label{sec:lattice-point-counting}
    In this section we prove \cref{lattice-points-reduction}, stated below. For a general set $S$, this theorem provides an upper bound on the maximum $S$-translate probability $\rho(A,S)$ in terms of the integer point density function from \cref{def:density}, under the assumption that the maximum point probability $\rho(A)$ is polynomially large (this is the ``concentrated'' case described in \cref{sec:outline}).
    
    \begin{theorem} \label{lattice-points-reduction}
    Fix $\delta \in (0, 1)$ and $C, C_1 > 0$. Let $A = (a_1, \ldots, a_n)$ be a sequence of vectors in a finite-dimensional vector space $V$ (over a field $\FF \subseteq \CC$), such that the basis packing number of $A$ is at least $\delta n$ and $\rho(A) \ge n^{-C}$. Then there exists $r = O_C(1)$ such that for any subset $S \subseteq V$ we have 
    \begin{equation} \label{eq:lattice-points-reduction}
    \rho(A, S) \le O_{\delta, C, C_1}\left(d_S(\sqrt{n \log n}) \cdot (\log n)^r + n^{-C_1}\right).
    \end{equation}
    \end{theorem}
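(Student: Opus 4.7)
The approach is to use inverse Littlewood--Offord theory to reduce the question to lattice point counting. Concretely, the plan is to first show that, after conditioning on a bounded number of ``exceptional'' coordinates, the random sum $X=\sum_i\xi_ia_i$ is close in total variation to the uniform distribution on an affine image of a lattice box in $\FF^r$; then the invariance properties of the integer point density function (\cref{slicing,no-jumps}) convert the resulting lattice count into $d_S(\sqrt{n\log n})$.

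To obtain the equidistribution claim, I would apply the Nguyen--Vu optimal inverse theorem (\cite[Theorem 2.5]{nguyen-vu-11}, referenced as \cref{inverse} in \cref{sec:outline}) iteratively, as outlined in \cref{sec:outline} (this is the dedicated \cref{iterated-inverse}). A single application of the inverse theorem uses $\rho(A)\ge n^{-C}$ to locate a generalized arithmetic progression (GAP) of rank $r=O_C(1)$ containing all but $O_C(1)$ of the $a_i$, and iteration upgrades ``contained in a GAP'' to ``the sum is approximately uniform on a GAP''. The output is a subset $I\subseteq[n]$ with $|I|\ge n-O_C(1)$, an affine-linear map $\psi:\FF^r\to V$, and side-lengths $B_1,\dots,B_r$ satisfying $\Omega_{\delta,C}(\sqrt{n})\le B_j\le O_{\delta,C}(\sqrt{n\log n})$, such that $X_I:=\sum_{i\in I}\xi_ia_i$ lies within total variation distance $O(n^{-C_1})$ of the uniform distribution on $\psi\bigl(\ZZ^r\cap\prod_j[-B_j,B_j]\bigr)$. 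The upper bounds on $B_j$ come from Gaussian-type tails for Rademacher sums; the lower bounds use the basis-packing hypothesis, which prevents the walk from being trapped on a coarse sublattice.

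Conditioning on the at most $O_C(1)$ exceptional $\xi_i$ with $i\notin I$ then gives
\[
\rho(A,S)\;\le\;\sup_{y\in V}\prob{X_I\in S+y}\;\le\;\sup_{y\in V}\frac{N_{\psi^{-1}(S+y)}(B_1,\dots,B_r)}{\prod_j(2\lfloor B_j\rfloor+1)}\;+\;O(n^{-C_1}),
\]
where the numerator denotes the count of lattice points of $\psi^{-1}(S+y)$ in the box $\prod_j[-B_j,B_j]$. The basis-packing hypothesis ensures that $\{a_i:i\in I\}$ still spans $V$, so $\psi$ is surjective; \cref{slicing} then gives $d_{\psi^{-1}(S+y)}=d_{S+y}$, which equals $d_S$ by the translation invariance of $d_S$ built into its definition. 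Enlarging the box $\prod_j[-B_j,B_j]$ to the cube $[-B,B]^r$ with $B=\lceil\sqrt{n\log n}\rceil$ multiplies the ratio by at most $\prod_j(B/B_j+1)=(\log n)^{O_r(1)}$ (this is where the lower bounds on $B_j$ are used, together with \cref{no-jumps}), and the cube's lattice-point count is in turn bounded by $(2B+1)^r d_S(B)$ by definition. Increasing $r$ if necessary to absorb the polylogarithmic exponent, this yields the claimed bound $O_{\delta,C,C_1}\bigl(d_S(\sqrt{n\log n})\cdot(\log n)^r+n^{-C_1}\bigr)$.

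The main obstacle is the iterated inverse theorem (\cref{iterated-inverse}): bootstrapping the Nguyen--Vu single-shot ``structure'' result (the $a_i$'s lie in a GAP) into an ``equidistribution'' statement (the sum is nearly uniform on a lattice box) requires iterating the inverse theorem while carefully controlling how errors accumulate and how successive GAP descriptions fit together. Pinning down each $B_j$ to within polylogarithmic factors of $\sqrt{n}$ uniformly across all coordinates---especially the lower bound, which prevents $X_I$ from concentrating on a sparse sublattice---is where the basis-packing assumption enters in an essential way, and is the delicate part of the argument.
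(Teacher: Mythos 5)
Your high-level plan (iterate the Nguyen--Vu inverse theorem, pass to the preimage under the GAP parametrisation $\psi$, and convert to $d_S(\sqrt{n\log n})$ via \cref{slicing}) is the right skeleton, but the central intermediate claim you rely on is both unproved and, as stated, false. You assert that after discarding $O_C(1)$ exceptional indices, $X_I=\sum_{i\in I}\xi_ia_i$ is within total variation distance $O(n^{-C_1})$ of the \emph{uniform} distribution on an affine image of a lattice box with side lengths $B_j$ satisfying $\Omega(\sqrt n)\le B_j\le O(\sqrt{n\log n})$. None of these three ingredients is available: (i) a Rademacher sum is never TV-close to uniform on a box --- already in the one-dimensional example $a_1=\dots=a_n=1$ the distribution has a binomial (bell-shaped) profile, so its TV distance to the uniform distribution on a comparable interval is bounded below by an absolute constant, not $n^{-C_1}$; (ii) the inverse theorem controls only the \emph{volume} $|Q|$ of the GAP, not the individual side lengths, so no lower bound $B_j=\Omega(\sqrt n)$ holds in general (the GAP may be very lopsided); and (iii) Nguyen--Vu requires the exceptional set to have size $s\ge n^{\eps}$, so you cannot discard only $O_C(1)$ elements --- nor do you need to. You flag this ``equidistribution'' step as the delicate part and defer it, but it is exactly where the argument breaks.

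The paper's proof shows the strong equidistribution statement is unnecessary. The iteration in \cref{iterated-inverse} is used for a different purpose: to discard $s_1=\delta n/2$ elements (so the remaining subsequence $A'$ still has basis packing number $\ge\delta n/2$, making $\psi$ surjective) while bounding the GAP volume by $O_{\delta,C}\bigl(\rho(A')^{-1}(n/\log n)^{-r/2}\bigr)$ --- i.e.\ in terms of $\rho(A')$ of the \emph{retained} subsequence rather than $\rho(A)$. One then pulls back to $\FF^r$, uses Hoeffding (\cref{Chernoff}) to confine the lifted sum to the dilated box $Q^*$ up to probability $O(n^{-C_1})$, and finishes with a plain union bound: each lattice point is hit with probability at most $\rho(A^*)\le\rho(A')$, and the number of relevant lattice points is at most $|Q^*|\cdot d_S(\sqrt{n\log n})$ by partitioning $Q^*$ into cubes of side $2\sqrt{n\log n}$ and applying \cref{slicing}. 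The crucial cancellation is $\rho(A')\cdot|Q^*|=O_{\delta,C,C_1}((\log n)^r)$, which substitutes entirely for your uniformity claim and requires no lower bounds on the GAP side lengths. To repair your write-up you would need to replace the TV-equidistribution step by this point-probability-times-count argument (or prove some genuinely new near-uniformity statement, which the known inverse theorems do not provide).
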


    \cref{example-lattice} shows that the bound in this theorem is sharp up to logarithmic factors.

    \begin{definition} \label{def:GAP}
    A subset $Q$ of an abelian group $G$ is called a \textit{proper symmetric generalized arithmetic progression} (\textit{proper symmetric GAP}, for short) of rank $r$ if there exist $v_1, \ldots, v_r \in G$ and $q_1, \ldots, q_r \in \NN$ such that
    \[
    Q = \{c_1v_1 + \ldots + c_rv_r : c_i \in \ZZ, \; |c_i| \le q_i \text{ for } 1 \le i \le r\},
    \]
    and each element of $Q$ can be represented as $c_1v_1 + \ldots + c_rv_r$ in a unique way. 
    \end{definition}

    A key ingredient in the proof of \cref{lattice-points-reduction} is an \emph{inverse theorem} for the linear Littlewood--Offord problem. 
    The first theorem of this kind was proved in seminal work of Tao and Vu \cite{tao-vu-09}; it states that if $\rho(A) \ge n^{-C}$, then almost all of the elements of $A$ are contained in a common GAP whose volume is at most $n^B$ and whose rank is at most $r$, for some $r$ and $B$ depending only on $C$. The quantitative aspects of this theorem were subsequently improved in theorems of Tao and Vu \cite{tao-vu-10} and Nguyen and Vu \cite{nguyen-vu-11}; we state the latter theorem below.

    \begin{theorem}[{Optimal inverse Littlewood--Offord theorem; Nguyen and Vu~\cite[Theorem 2.5]{nguyen-vu-11}}] \label{inverse}    
    Fix $\eps \in (0, 1)$ and $C > 0$. Let $A$ be a sequence of $n$ elements of an abelian torsion-free group, satisfying $\rho(A) \ge n^{-C}$.
    Then for any $n^\varepsilon \le s \le n$, there exists a proper symmetric GAP $Q$ of rank $r = O_{C, \varepsilon}(1)$, such that it contains all but at most $s$ elements of $A$, and
    \[
    |Q| = O_{C, \varepsilon} \left( \rho(A)^{-1} s^{-r/2} \right).
    \]
    \end{theorem}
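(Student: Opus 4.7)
The plan is to prove this following the Fourier-analytic template for inverse Littlewood--Offord theorems pioneered by Tao--Vu, with the sharpening specific to Nguyen--Vu. After passing to a finitely generated subgroup, assume $A$ lies in $\ZZ^D$ for some $D$; then Fourier inversion on $(\RR/\ZZ)^D$ gives
\[
\prob{X = x} = \int_{(\RR/\ZZ)^D} \Big(\prod_{i=1}^n \cos(2\pi \theta \cdot a_i)\Big) e^{-2\pi i \theta \cdot x} \, d\theta,
\]
so that $\rho(A) \le \int \prod_i |\cos(2\pi\theta \cdot a_i)|\, d\theta$. Using $|\cos(\pi u)| \le \exp(-c\|u\|_{\RR/\ZZ}^2)$, this bounds $\rho(A)$ by a Gaussian-type integral in the quantity $Q(\theta) := \sum_i \|\theta \cdot a_i\|^2$.

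From $\rho(A)\ge n^{-C}$ I would extract, via dyadic decomposition of the Fourier integral, a \emph{resonance set} $\Theta \subseteq (\RR/\ZZ)^D$ with Lebesgue measure $\gtrsim \rho(A)/(\log n)^{O(1)}$ on which $Q(\theta) = O_C(\log n)$. The key point is that $\Theta$ is then an approximate additive subgroup: if $\theta, \theta' \in \Theta$, then for ``most'' indices $i$ we have $\|(\theta+\theta') \cdot a_i\| \le \|\theta \cdot a_i\| + \|\theta' \cdot a_i\|$, giving $Q(\theta+\theta') = O_C(\log n)$ as well. A Plünnecke--Ruzsa / Balog--Szemer\'edi--Gowers argument applied to $\Theta$ then shows that $\Theta$ is commensurable with a Bohr set of bounded rank $r = O_{C,\eps}(1)$. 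By Pontryagin-type duality for Bohr sets, this produces a proper symmetric GAP $Q^*$ of rank $r$ in $\ZZ^D$ whose members are precisely those $a_i$ for which $\theta \cdot a_i$ stays small for every $\theta \in \Theta$; at most $s$ of the $a_i$ fall outside $Q^*$, where $s$ is governed by how much of $\Theta$ may be sacrificed in the sum-set estimate.

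To obtain the sharp volume bound $|Q^*| = O_{C,\eps}(\rho(A)^{-1}\, s^{-r/2})$, I would argue by induction on the rank. The base case $r=1$ is essentially Hal\'asz's inequality: if the $a_i$ are contained in a single arithmetic progression $\{-q v, \dots, q v\}$ with at most $s$ exceptions, then the image sequence under $v \mapsto 1$ is a scalar Littlewood--Offord configuration, and Erd\H{o}s--Littlewood--Offord plus the pigeonhole gives $2q+1 \le O(\rho(A)^{-1} s^{-1/2})$. For the inductive step, project along one of the $r$ generators $v_j$ of the GAP: the projected sequence of length $\ge n - s$ still has polynomially large point probability (by Fubini, losing only a factor of $2q_j+1$), so by induction lies mostly in a rank-$(r-1)$ GAP of volume $O(\rho^{-1} s^{-(r-1)/2}/(2q_j+1))$; then the linear Littlewood--Offord bound in direction $v_j$ itself contributes the missing $\sqrt{s}$ factor.

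The main obstacle is precisely this inductive step: one must carefully arrange the projections so that (i) the \emph{polynomial} lower bound on the projected point probability is preserved with only an $n^{o(1)}$ loss, (ii) the exceptional set accumulated across all $r$ inductive steps stays $O(s)$ rather than $O(rs)$ with an unbounded $r$, and (iii) the rank is controlled solely in terms of $C$ and $\eps$. Achieving (iii) is what forces the assumption $s \ge n^\eps$: one needs the savings $s^{r/2}$ to beat $n^{C}$ so that the rank-growth of the GAP terminates after $r = O_{C,\eps}(1)$ iterations, and this in turn controls the constants in the Plünnecke--Ruzsa/Freiman step.
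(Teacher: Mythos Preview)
The paper does not prove this theorem: it is quoted verbatim as \cite[Theorem 2.5]{nguyen-vu-11} and used as a black box (its only role is as an input to \cref{iterated-inverse} and \cref{one-step}). So there is no ``paper's own proof'' to compare your proposal against.

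That said, a few remarks on your sketch as an attempted reconstruction of the Nguyen--Vu argument. The Fourier-analytic opening (reduction to $\ZZ^D$, the Hal\'asz-type bound $\rho(A)\le \int \exp(-cQ(\theta))\,d\theta$, extraction of a large level set $\Theta$ on which $Q$ is small) is exactly right, as is the observation that $\Theta$ has approximate group structure and that a Freiman-type theorem then yields a bounded-rank GAP whose ``dual'' captures most of the $a_i$. Where your outline departs from the actual proof is the endgame for the sharp volume bound $|Q| = O(\rho(A)^{-1} s^{-r/2})$: Nguyen and Vu do \emph{not} induct on the rank by projecting out one generator at a time. Instead, they compare the measure of the level set $\Theta$ directly with the volume of the dual Bohr set, and then pass from the Bohr set to a proper GAP via a geometry-of-numbers / Minkowski-type covering lemma; the factor $s^{-r/2}$ arises because each of the $r$ Bohr frequencies contributes a saving of order $s^{-1/2}$ simultaneously in this covering step. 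Your proposed induction would, as you yourself flag in obstacle (i), lose a factor at each step when relating the projected point probability back to $\rho(A)$, and it is not clear those losses can be kept to $n^{o(1)}$ without essentially redoing the direct argument.
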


    A significant shortcoming of \cref{inverse} is that the bound on $|Q|$ is in terms of $\rho(A)$, which can be much smaller than $\rho(A')$ (we wish to ``discard'' the elements in $A\setminus A'$, and it is important to avoid a dependence on the discarded elements). We can address this issue by \emph{iterating} \cref{inverse}, yielding the following result.

    \begin{theorem} \label{iterated-inverse}
    Fix $\eps \in (0, 1)$ and $C > 0$. Let $A$ be a sequence of $n$ elements of an abelian torsion-free group, satisfying $\rho(A) \ge n^{-C}$.
    Then for any $n^\varepsilon \le s_1 \le n$, there exists a proper symmetric GAP $Q$ of rank $r = O_{C, \varepsilon}(1)$ and a subsequence $A'$ of $A$ of size at least $n - s_1$, such that all elements of $A'$ lie in $Q$, and
    \[
    |Q| = O_{C, \varepsilon} \left( \rho(A')^{-1} (s_1/\log n)^{-r/2} \right).
    \]
    \end{theorem}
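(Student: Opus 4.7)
The plan is to iterate the Nguyen--Vu theorem (\cref{inverse}), stopping once the maximum point probability $\rho$ fails to at least double on passing to the next subsequence. Since restricting to a subsequence only increases $\rho$ (by \cref{rho(A S)-1}), and since $\rho$ cannot double more than $O(\log n)$ times before reaching $1$, such a stopping step must occur within $O(\log n)$ iterations. Discarding $s$ elements per iteration then costs at most $O(s\log n)$ elements in total, which is why we can afford to take $s = \Theta(s_1/\log n)$.

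Concretely, set $s = s_1/(K\log n)$ for a constant $K = K(C)$ fixed momentarily. Starting from $A^{(0)} := A$, iteratively apply \cref{inverse} to $A^{(i)}$ with discard parameter $s$ (the hypotheses hold with slightly inflated constants throughout, since $|A^{(i)}|$ remains in $[n/2, n]$, $s \ge n^{\varepsilon/2}$ for $n$ large, and $\rho(A^{(i)}) \ge \rho(A) \ge n^{-C} \ge |A^{(i)}|^{-2C}$). This produces a proper symmetric GAP $Q^{(i)}$ of rank $r^{(i)} = O_{C,\varepsilon}(1)$ with $|Q^{(i)}| = O(\rho(A^{(i)})^{-1} s^{-r^{(i)}/2})$, and a subsequence $A^{(i+1)} \subseteq A^{(i)}$ of size at least $|A^{(i)}| - s$ consisting of all elements of $A^{(i)}$ lying in $Q^{(i)}$. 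I then terminate at the first step $i$ for which $\rho(A^{(i+1)}) \le 2\rho(A^{(i)})$, and output $Q := Q^{(i)}$ and $A' := A^{(i+1)}$.

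For the analysis: as long as the procedure has not yet terminated, $\rho(A^{(i)}) \ge 2^i\rho(A) \ge 2^i n^{-C}$, which forces termination within $\lceil C\log_2 n\rceil$ steps. With the choice $K = 2C$, the total discard is then at most $(K\log n)\cdot s = s_1$, so $|A'| \ge n - s_1$; the final GAP has rank $r := r^{(i)} = O_{C,\varepsilon}(1)$; and the stopping condition gives $\rho(A^{(i)}) \ge \rho(A')/2$, hence
\[
|Q| = |Q^{(i)}| = O\!\left(\rho(A^{(i)})^{-1} s^{-r/2}\right) = O\!\left(\rho(A')^{-1}(s_1/\log n)^{-r/2}\right),
\]
as required.

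The only real obstacle is keeping the hypotheses of \cref{inverse} uniformly valid across the $O(\log n)$ iterations, which is handled by absorbing minor factor-of-$2$ losses into the choice of constants. The one regime requiring a separate word is $s_1 > n/2$, where $|A^{(i)}|$ might dip below $n/2$ during the iteration; this case can be treated directly by applying \cref{inverse} a single time with discard parameter $s_1$, since then the bound $O(\rho(A)^{-1}s_1^{-r/2})$ is already strong enough after noting $\rho(A')\ge \rho(A)$ and paying the harmless $(\log n)^{r/2}$ factor built into the target.
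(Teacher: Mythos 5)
Your argument is essentially the paper's proof: iterate the Nguyen--Vu inverse theorem with discard parameter $s \approx s_1/\log n$, stop at the first step where $\rho$ fails to double, and use $\rho \le 1$ together with $\rho(A)\ge n^{-C}$ to force termination within $O(C\log n)$ steps; the hypothesis checks via \cref{rho(A S)-1} and the final bookkeeping are all as in the paper.

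The only flaw is your closing remark about the regime $s_1 > n/2$: the claimed shortcut is backwards. Applying \cref{inverse} once gives $|Q| = O(\rho(A)^{-1}s_1^{-r/2})$, and since $\rho(A')\ge\rho(A)$ we have $\rho(A)^{-1}\ge\rho(A')^{-1}$, so this bound may exceed the target $O(\rho(A')^{-1}(s_1/\log n)^{-r/2})$ by a factor as large as $\rho(A')/\rho(A)$, which can be polynomial in $n$; no $(\log n)^{r/2}$ headroom absorbs that --- indeed, avoiding the dependence on $\rho(A)$ (which sees the discarded elements) is the entire point of the iteration. The case is nonetheless harmless: either reduce to $s_1\le n/2$ by simply decreasing $s_1$ to $n/2$ (the conclusion for $n/2$ implies the conclusion for any $s_1\le n$ at the cost of a factor $2^{r/2}=O_{C,\eps}(1)$), which is how the paper handles it, or observe that with your choice $s=s_1/(2C\log n)$ the iteration terminates within roughly $C\log_2 n$ steps and hence never discards more than about $s_1/2\le n/2$ elements, so $|A^{(i)}|$ in fact stays bounded below by a constant fraction of $n$ and the main argument already covers $s_1>n/2$ without any separate treatment.
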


    \begin{remark*}
    We suspect that in the setting of \cref{iterated-inverse}, a stronger bound of the form $O_{C, \varepsilon} (\rho(A)^{-1} s_1^{-r/2})$ might hold (this would yield a common generalisation of \cref{inverse,iterated-inverse}).
    \end{remark*}

    \begin{proof}[\textbf{Proof of \cref{iterated-inverse}}]
        We assume that $n$ is sufficiently large compared to $C$ and $\eps$. By decreasing $s_1$, we may assume that $s_1 \le n/2$. Then by increasing $C$, we may also assume that $\rho(A) \ge (n-s_1)^{-C}$. Let $L = \lceil C \log_2 n \rceil$, and let $s = s_1/L$.

        We construct a descending chain $A_0, A_1, \ldots, A_L$ of subsequences of $A$ with sizes $n_0 \ge n_1 \ge \ldots \ge n_L$ satisfying $n_i \ge n - is$, as follows. Set $A_0 = A$. To obtain $A_{i+1}$ from $A_i$, first note that $n^{\eps/2} \le s \le n$, and that by \cref{rho(A S)-1} 
        \[
        \rho(A_i) \ge \rho(A) \ge (n-s_1)^{-C} \ge (n - is)^{-C} \ge n_i^{-C}.
        \]
        Therefore, we can apply the optimal inverse theorem (\cref{inverse}) to the sequence $A_i$. As a result, we obtain a proper symmetric GAP $Q_i$ of rank $r_i = O_{C, \varepsilon}(1)$, such that all but at most $s$ elements of $A_i$ lie in $Q_i$, and
        \begin{equation} \label{eq:|Q_i|}
        |Q_i| = O_{C, \varepsilon} \left(\rho(A_i)^{-1} s^{-r_i/2} \right).
        \end{equation}
        Let $A_{i+1}$ be the subsequence of $A_i$ consisting of all elements that lie in $Q_i$. Then we indeed have
        \[
        n_{i+1} \ge n_i - s \ge n - (i+1)s.
        \]

        Suppose that for some $0 \le i < L$ we have $\rho(A_{i+1}) \le 2 \rho(A_i)$. In this case we can replace $\rho(A_i)$ by $\rho(A_{i+1})$ in the estimate \cref{eq:|Q_i|}. Then we are done by taking $A' = A_{i+1}$ and $Q = Q_i$.

        Otherwise, we have $\rho(A_{i+1}) > 2 \rho(A_i)$ for all $0 \le i < L$. Then
        \[
        \rho(A_L) > 2^L \rho(A) \ge 2^{C \log_2 n} n^{-C} = 1.
        \]
        But $\rho(A_L)$ is a supremum of probabilities, a contradiction. 
    \end{proof}

    We will also need a simple concentration inequality (a consequence of Hoeffding's inequality \cite{hoeffding-63}). For positive reals $q_1, \ldots, q_r$ we let $Q_r(q_1, \ldots, q_r)=\{(x_1, \ldots, x_r) \in \ZZ^r : |x_i| \le q_i\}$.

    \begin{proposition} \label{Chernoff}
        For any vectors $a_1, \ldots, a_m$ in $Q_r(q_1, \ldots, q_r)$ and $t > 0$ we have 
        \[
        \prob{\xi_1 a_1 + \ldots + \xi_m a_m \notin Q_r(t q_1, \ldots, t q_r)} \le 2r \exp\left(-\frac{t^2}{2m}\right).
        \]
    \end{proposition}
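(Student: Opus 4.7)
The plan is to apply Hoeffding's inequality coordinatewise and then take a union bound over the $r$ coordinates.

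For each $j \in [r]$, let $a_i^{(j)}$ denote the $j$-th coordinate of $a_i$, so that $|a_i^{(j)}| \le q_j$ since $a_i \in Q_r(q_1, \ldots, q_r)$. The complement of the event $\xi_1 a_1 + \ldots + \xi_m a_m \in Q_r(tq_1, \ldots, tq_r)$ is the union over $j \in [r]$ of the events $E_j := \{|\xi_1 a_1^{(j)} + \ldots + \xi_m a_m^{(j)}| > tq_j\}$, so by the union bound it suffices to show $\prob{E_j} \le 2\exp(-t^2/(2m))$ for each $j$.

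Fix $j$, and consider the sum $S_j = \sum_{i=1}^m \xi_i a_i^{(j)}$ of independent mean-zero random variables, each taking values in the interval $[-q_j, q_j]$. Hoeffding's inequality then gives
\[
\prob{E_j} = \prob{|S_j| > tq_j} \le 2\exp\left(-\frac{2(tq_j)^2}{\sum_{i=1}^m (2q_j)^2}\right) = 2\exp\left(-\frac{t^2}{2m}\right),
\]
as required. Summing over the $r$ coordinates yields the desired bound. There is no substantial obstacle: the argument is just a direct application of Hoeffding (with the bounds $[-q_j, q_j]$ producing the denominator $4mq_j^2$, which cancels the $q_j^2$ in the numerator) together with a union bound, and one should only take care that the coordinates $q_j$ may differ, which is precisely why treating the coordinates separately before summing their failure probabilities gives the right dependence on $t$.
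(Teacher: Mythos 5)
Your proof is correct and follows essentially the same argument as the paper: apply Hoeffding's inequality to each coordinate (a sum of $m$ independent mean-zero variables in $[-q_j,q_j]$) and take a union bound over the $r$ coordinates. The only implicit point, handled the same way in the paper, is that the random sum automatically has integer coordinates, so failing to lie in $Q_r(tq_1,\ldots,tq_r)$ is exactly the event that some coordinate exceeds $tq_j$ in absolute value.
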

    \begin{proof}
        For each $1 \le j \le r$, the $j$-th coordinate of $\xi_1 a_1 + \ldots + \xi_m a_m$ is a sum of $m$ independent random variables; each of them has expected value equal to zero and is supported in $[-q_j, q_j]$. By Hoeffding's inequality, 
        \[
        \prob{|(\xi_1 a_1 + \ldots + \xi_m a_m)_j| > tq_j} \le 2 \exp\left(-\frac{2 \cdot (t q_j)^2}{m (2q_j)^2}\right) \le 2 \exp\left(-\frac{t^2}{2m}\right).
        \]
        The union bound over $1 \le j \le r$ completes the proof. 
    \end{proof}

    \begin{proof}[\textbf{Proof of \cref{lattice-points-reduction}}]
        We apply \cref{iterated-inverse} to $A$ with $s_1 = \delta n / 2$. As a result, we obtain a subsequence $A'$ of size at least $(1-\delta/2)n$ and a proper symmetric GAP $Q$ of rank $r = O_C(1)$ such that all elements of $A'$ lie in $Q$, and 
        \[
        |Q| \le O_{\delta, C}\left(\rho(A')^{-1} (n/\log n)^{-r/2}\right).
        \]
        Let $v_1, \ldots, v_r$ be the generators of $Q$, and let $e_1, \ldots, e_r$ be the standard basis vectors of $\FF^r$. Let $\psi: \FF^r \to V$ be the linear map defined by $\psi(e_i) = v_i$ for all $i$. Then $Q$ is the image of the set of integer points of some box $Q_r(q_1, \ldots, q_r)$ under this map $\psi$. Since $Q$ is proper, the restriction $\psi|_{Q_r(q_1, \ldots, q_r)}$ provides a bijection between $Q_r(q_1, \ldots, q_r)$ and $Q$. In particular, 
        \begin{equation} \label{eq:|Q_r|}
        |Q_r(q_1, \ldots, q_r)| = |Q| \le O_{\delta, C}\left(\frac{(\log n)^{r/2}}{\rho(A') \cdot n^{r/2}}\right).
        \end{equation}
        
        As the vectors of $A'$ lie in $Q$, we can define $A^*$ to be the sequence of vectors in $\CC^r$ obtained by taking preimages of the elements of $A'$ under this bijection.
        
        By assumption, the basis packing number of $A$ is at least $\delta n$. Hence, the basis packing number of $A'$ is at least $\delta n /2 > 0$. In particular, the sequence $A' = \psi(A^*)$ contains a basis of $V$, thus the map $\psi$ is surjective.  

        Then by \cref{rho(A S)-1,rho(A S)-3} we have
        \begin{equation} \label{eq:rho(A S)}
        \rho(A, S) \le \rho(A', S) = \rho(A^*, \psi^{-1}(S)),
        \end{equation}
        and (by \cref{rho(A S)-2,rho(A S)-3})
        \begin{equation} \label{eq:rho(A^*)}
        \rho(A^*) \le \rho(A^*, \ker \psi) = \rho(A').
        \end{equation}

        Let $a^*_1, \ldots, a^*_m$ be the vectors of the sequence $A^*$, where $m \ge (1 - \delta/2)n$, and consider $x \in \FF^r$. By \cref{eq:rho(A S)}, it suffices to estimate the probability that $\xi_1 a^*_1 + \ldots + \xi_m a^*_m$ lies in the set $S^*_x = (\psi^{-1}(S) - x)$.
        
        By increasing $C_1$, we may assume that $\sqrt{2C_1}$ is a positive integer. Define the dilated box
        \[
        Q^* = Q_r(\sqrt{2C_1 n \log n} \cdot q_1, \ldots, \sqrt{2C_1 n \log n} \cdot q_n).
        \]
        By \cref{Chernoff}, combined with the fact that $n/2 \le m \le n$, we have
        \[
        \prob{\xi_1 a^*_1 + \ldots + \xi_m a^*_m \notin Q^*} \le 2r \cdot m^{-C_1} \le O_{C, C_1}(n^{-C_1}).
        \] 
        This corresponds to the $n^{-C_1}$ term in the desired bound \cref{eq:lattice-points-reduction}.

        For the rest of the argument we focus on the probability that $\xi_1 a^*_1 + \ldots + \xi_m a^*_m$ lies in the set $S^*_x \cap Q^*$. We estimate this probability by the union bound: that is, we view it as the sum of $\prob{\xi_1 a^*_1 + \ldots + \xi_m a^*_m  = y}$ over all $y \in S^*_x \cap Q^*$. 
        By definition, $\rho(A^*)$ is the maximum point probability. Then \cref{eq:rho(A^*)} implies that for any $y \in \FF^r$
        \begin{equation} \label{eq:point-probability}
        \prob{\xi_1 a^*_1 + \ldots + \xi_m a^*_m  = y} \le \rho(A^*) \le \rho(A').
        \end{equation}

        To estimate the number of points in $|S^*_x \cap Q^*|$, we partition $Q^*$ into boxes with side lengths $2\sqrt{n \log n}$. The number of points of $S^*_x$ inside each constituent box may be expressed as a product of the total number of integer points in this box and the proportion of them that lie in $S_x^*$. This proportion, in turn, is bounded above by the relevant value of the density function $d_{S^*_x}(\sqrt{n \log n})$. Recalling that $S^*_x = \psi^{-1}(S) - x$ and $\psi$ is a surjective linear map, by \cref{slicing} we have $d_{S^*_x} = d_S$.
        
        Taking the sum over all boxes in our partition, we obtain
        \begin{equation} \label{eq:|S^*_x|}
        |S^*_x \cap Q^*| \le |Q^*| \cdot d_{S^*_x}(\sqrt{n \log n}) = |Q^*| \cdot d_S(\sqrt{n \log n}).
        \end{equation}
        From \cref{eq:|Q_r|} we have
        \begin{equation} \label{eq:|Q^*|}
        |Q^*| \le (n \log n)^{r/2} \cdot |Q_r(q_1, \ldots, q_r)| \le O_{\delta, C, C_1}\left(\rho(A')^{-1}(\log n)^r\right).
        \end{equation}
        Finally, we combine the inequalities \cref{eq:point-probability}, \cref{eq:|S^*_x|} and \cref{eq:|Q^*|} to conclude that
        \begin{align*}
        \prob{\xi_1 a^*_1 + \ldots + \xi_m a^*_m \in S^*_x \cap Q^*}
        &\le \rho(A') \cdot |S^*_x \cap Q^*|
        \le \rho(A') \cdot |Q^*| \cdot d_{S}(\sqrt{n \log n}) \\
        &\le O_{\delta, C, C_1}\left(d_{S}(\sqrt{n \log n}) \cdot (\log n)^r \right).
        \end{align*}
        This completes the proof.
    \end{proof}

    \section{Sets of points in convex position: proof of \texorpdfstring{\cref{convex-position}}{Theorem~\ref{convex-position}}}\label{sec:convex}
    In this section we prove \cref{convex-position} and deduce \cref{convex-1/2}. As described in \cref{sec:outline}, our strategy to prove \cref{convex-position} is to reduce the problem to counting integer points in a certain preimage of $S$. Therefore, we need an estimate on the maximum possible number of integer points in convex position inside $[-B, B]^k$. The following theorem is due to Andrews~\cite{andrews-63} (generalising an earlier result of Jarn\'ik~\cite{jarnik-26} for the case $k=2$).

    \begin{theorem}[{Andrews \cite{andrews-63}; see also \cite[Theorem 2]{barany-larman-98}}] \label{Andrews}
        Let $S \subseteq \RR^k$ be a set of points in convex position. Then for any $B \ge 1$
        \[
        N_S(B) \le O_k(B^{k - \frac{2k}{k+1}}).
        \]
    \end{theorem}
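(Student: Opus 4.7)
The plan is to reduce the statement to the following volume inequality, which constitutes the core of Andrews' theorem: for any convex polytope $P \subseteq \RR^k$ whose $N$ vertices are all lattice points, $\mathrm{vol}(P) \geq c_k N^{(k+1)/(k-1)}$ for some constant $c_k > 0$ depending only on $k$. Once this is established, I apply it to $P = \mathrm{conv}(S)$, whose vertex set equals $S$ since $S$ is in convex position. Together with the containment $P \subseteq [-B,B]^k$ this gives $c_k N_S(B)^{(k+1)/(k-1)} \leq \mathrm{vol}(P) \leq (2B)^k$, and rearranging yields $N_S(B) \leq O_k(B^{k(k-1)/(k+1)}) = O_k(B^{k - 2k/(k+1)})$, as required.

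The base case $k = 2$ (Jarn\'ik's theorem) admits a particularly clean proof which bypasses the volume inequality entirely. The $N$ consecutive edges of a convex lattice polygon have pairwise distinct primitive integer directions, and since there are only $\Theta(R^2)$ primitive integer vectors of length at most $R$, the $i$-th shortest such direction has length $\Omega(\sqrt{i})$. Summing over edges bounds the perimeter of $P$ below by $\Omega(N^{3/2})$, while the containment $P \subseteq [-B,B]^2$ forces its perimeter to be $O(B)$. Combining these gives $N \leq O(B^{2/3})$, which matches the target bound for $k=2$.

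For $k \geq 3$, the inductive step is the main obstacle and is genuinely delicate. A naive slicing of $P$ by $2B+1$ axis-aligned hyperplanes, combined with the inductive hypothesis on each slice, only yields the weaker bound $N \leq O_k(B^{k - 2(k-1)/k})$, which falls short of the target exponent (e.g.\ $B^{5/3}$ instead of $B^{3/2}$ when $k = 3$). To recover the sharp bound, I would try a direct simplicial decomposition of $P$: pick an interior lattice point $v$ and sum the volumes of the lattice simplices spanned by $v$ together with each lower-dimensional face. Since every full-dimensional lattice simplex has volume at least $1/k!$, a lower bound on $\mathrm{vol}(P)$ reduces to a combinatorial count of how many ``distinct-direction'' simplices must appear around $v$. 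An alternative route is to slice simultaneously in many hyperplane directions (not just along coordinate axes) and average, exploiting the full convexity of $P$. Either strategy requires extracting the correct volume-versus-vertex-count trade-off of order $N^{(k+1)/(k-1)}$, and this is where I expect the bulk of the work to lie.
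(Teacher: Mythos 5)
Your reduction is sound as far as it goes: restating Andrews' theorem as the volume inequality $\mathrm{vol}(P)\ge c_k N^{(k+1)/(k-1)}$ for a lattice polytope with $N$ vertices, applying it to the convex hull of the lattice points of $S$ in the box (they are all vertices, since $S$ is in convex position), and comparing with $\mathrm{vol}(P)\le(2B)^k$ does give the exponent $k-\frac{2k}{k+1}$, modulo the minor degenerate case where the hull is not full-dimensional (handled by passing to the affine hull and using monotonicity of the exponent in the dimension). Your $k=2$ argument via distinct edge directions and the $\Omega(\sqrt i)$ growth of the $i$-th shortest primitive vector is the standard Jarn\'ik proof and is correct. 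Note that the paper itself does not prove this statement at all; it is quoted as a black box from Andrews and B\'ar\'any--Larman, so there is no internal proof to compare against.

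The genuine gap is that for $k\ge 3$ you have not proved anything: the volume-versus-vertex-count inequality in dimension $\ge 3$ \emph{is} Andrews' theorem, and your restatement plus the $k=2$ case does not reduce its difficulty. Moreover, the specific strategies you sketch would not succeed as described. The simplicial decomposition around an interior lattice point (which need not even exist) only gives $\mathrm{vol}(P)\ge (\text{number of simplices})/k!$, and a triangulation built on $N$ vertices can have as few as $O_k(N)$ full-dimensional simplices, so this route yields only $\mathrm{vol}(P)\ge c_k N$, i.e.\ the trivial bound $N\le O_k(B^k)$ --- it cannot produce the superlinear exponent $(k+1)/(k-1)$. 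Naive coordinate slicing, as you note yourself, also falls short. The actual proofs (Andrews' original argument, or the B\'ar\'any--Larman approach via economic cap coverings) require substantially more structure --- roughly, one must exploit that near each vertex the polytope is locally ``flat'' at many different scales/directions simultaneously, which is what the cap-covering machinery quantifies. As it stands, the proposal proves the theorem only for $k=2$ and otherwise defers the core of the matter to an unproven lemma.
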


    \begin{remark*} \cref{Andrews} can be viewed as an upper bound on the number of vertices of a lattice polytope contained in $[-B, B]^k$. B\'ar\'any and Larman \cite[Theorem 1]{barany-larman-98} proved that this bound is tight up to a multiplicative constant factor: a lower bound of the same order of magnitude is achieved by the convex hull of integer points inside the ball of radius $B$.
    \end{remark*}

    As described in the outline, we use the following result of Fox, Kwan and Spink \cite{fox-kwan-spink-23} to handle the ``spread-out'' case. 

    \begin{theorem}[Fox, Kwan and Spink {\cite[Theorem 1.9(2)]{fox-kwan-spink-23}}] \label{spread-out}
    Let $S \subseteq \RR^k$ be a set of points in convex position, and let $A$ be a sequence of nonzero vectors in $\RR^k$. Then
    \[
    \rho(A, S) \le O_k\left(\rho(A)^{1/(k2^{k-1})}\right).
    \]
    \end{theorem}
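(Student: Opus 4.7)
My plan is to prove this by induction on the ambient dimension $k$, reducing the convex-position constraint in $\RR^k$ to one in $\RR^{k-1}$ via a slicing argument, and paying for the reduction through a Cauchy--Schwarz step that doubles the exponent on $\rho(A)$ at each level of the induction. Throughout, set $p := \rho(A, S)$, $\rho := \rho(A)$, and $X := \xi_1 a_1 + \dots + \xi_n a_n$. After translating $S$ by the optimal shift achieving the supremum, I may assume $p = \prob{X \in S}$. The target inequality is equivalent to $p^{k \cdot 2^{k-1}} \lesssim_k \rho$.

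The base case $k = 1$ is trivial: a convex-position set in $\RR$ has at most two elements, so $p \le 2\rho$, matching the exponent $1 \cdot 2^0 = 1$. For the inductive step in dimension $k$, I would fix a direction $v \in \RR^k$ in which all points of $S$ have distinct projections (a generic condition), and slice $S$ into fibers $S_t := S \cap \{x : \langle x, v \rangle = t\}$, each of which lies in a $(k-1)$-dimensional affine subspace and inherits convex position. I would then apply the induction hypothesis in dimension $k-1$ to the conditional law of $X$ given $\langle X, v \rangle = t$, obtaining for each $t$ a bound of the form $\prob{X \in S_t \mid \langle X, v \rangle = t} \lesssim_{k-1} \rho_t^{1/((k-1) \cdot 2^{k-2})}$, where $\rho_t$ is the corresponding conditional maximum point probability. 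Summing over $t$ with weights $\prob{\langle X, v\rangle = t}$ and invoking Cauchy--Schwarz together with the comparison $\rho_t \cdot \prob{\langle X, v\rangle = t} \le \rho$ should produce a bound of the desired form $p \lesssim_k \rho^{1/(k \cdot 2^{k-1})}$, with the doubling of the exponent arising precisely from the Cauchy--Schwarz step.

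The main obstacle will be carrying out the exponent bookkeeping cleanly: a naive direct estimate can actually give a trivial bound, because the ``spread'' of the projection $\langle X, v\rangle$ can be comparable to $\rho^{-1}$, which would cancel the gain from the induction hypothesis. Overcoming this requires averaging over the random choice of direction $v$ (or over a set of pairwise transverse directions), so that the Cauchy--Schwarz loss is paid only once rather than accumulated. An alternative route, more in the spirit of the paper's description of Fox--Kwan--Spink's original argument, is to set up a bipartite incidence structure between Rademacher outcomes $\omega$ with $X(\omega) \in S$ and the points they hit, and run a K\H{o}v\'ari--S\'os--Tur\'an forbidden-configuration argument in which convex position prohibits $k \cdot 2^{k-1}$ points from jointly having too many preimages. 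I would try both routes and commit to whichever admits cleaner exponent bookkeeping.
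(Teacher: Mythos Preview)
This theorem is not proved in the present paper: it is quoted as a black box from Fox, Kwan and Spink, and the paper only remarks that their proof ``is based on a reduction to a K\H ov\'ari--S\'os--Tur\'an-type theorem in an auxiliary hypergraph, and some simple combinatorial consequences of the fact that the points in $S$ lie in convex position''. So there is no proof here to compare against; I will just assess your proposal on its own merits.

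Your first route has a structural gap that you do not address. When you condition on $\langle X, v\rangle = t$, the conditional law of $X$ is \emph{not} a Rademacher sum $\sum_i \xi_i a_i$: the constraint $\sum_i \xi_i \langle a_i, v\rangle = t$ destroys the product structure of $(\xi_1,\dots,\xi_n)$. Hence the inductive hypothesis, which is stated only for sequences $A$ of nonzero vectors, does not apply to the slice. To make the induction run you would have to strengthen the statement to some class of random vectors closed under conditioning on linear functionals, and it is not clear what that class should be or that the base case survives. Even granting this, the exponent problem you yourself flag is real: writing $q_t=\prob{\langle X,v\rangle=t}$ and $\alpha=1/((k-1)2^{k-2})$, the naive bound $p\lesssim \rho^{\alpha}\sum_t q_t^{1-\alpha}$ is $\Theta(1)$ whenever the projection has support of size about $\rho^{-1}$, and ``averaging over directions'' is too vague a prescription to see how it rescues this.

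Your second route---an incidence/K\H ov\'ari--S\'os--Tur\'an argument exploiting a forbidden configuration among points in convex position---is the one Fox, Kwan and Spink actually take, and is the one to commit to. The exponent $k\cdot 2^{k-1}$ arises there from iterating a decoupling (Cauchy--Schwarz) step across $k$ levels, combined with the combinatorial fact that $k+1$ points in convex position in $\RR^k$ cannot all lie on a common line; your sketch of this route is on the right track but would need the specific forbidden-configuration lemma to be made explicit.
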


    \begin{proof}[\textbf{Proof of \cref{convex-position}}]
        We have a sequence $A$ of vectors in $\RR^k$ with the basis packing number at least $b$. Consider the subsequence $A_0 = A[I_0]$ containing only the vectors of $b$ disjoint bases. It has size $m := bk$ and basis packing number equal to $b$. 
        
        We need to estimate the probability that $\xi_1 a_1 + \ldots + \xi_n a_n$ lies in $S$. It is bounded by $\rho(A, S)$, which is at most $\rho(A_0, S)$ by \cref{rho(A S)-1}.

        Let $C = k2^{k-1}$. First, suppose that $\rho(A_0) < m^{-C}$. Then, by \cref{spread-out}, we have
        \[
        \rho(A_0, S) \le O_k\left(\rho(A_0)^{1/(k2^{k-1})}\right),
        \] 
        which is at most $O_k(m^{-1})$. Since $m = bk$, this gives the desired bound.
        
        Therefore, we may assume that $\rho(A_0) \ge m^{-C}$. Applying \cref{lattice-points-reduction} to $A_0$ with $\FF = \RR$, $\delta = 1/k$ and $C_1 = 1$, we obtain
        \begin{equation} \label{eq:rho(A_0 S)}
        \rho(A_0, S) \le O_{k}\left(d_S(\sqrt{m \log m}) \cdot (\log m)^r + m^{-1}\right)
        \end{equation}
        for some $r = O_k(1)$.        
        Observe that for any bijective affine-linear map $\varphi:\RR^k \to \RR^k$ the set $\varphi(S)$ is also in convex position. Thus, by the definition of the density function $d_S$ combined with Andrews' theorem (\cref{Andrews}), for any $B \ge 1$ we have
        \[
        d_S(B) = \sup_{\varphi}\left(\frac{N_{\varphi(S)}(B)}{(2\lfloor B \rfloor+1)^k}\right) \le O_k(B^{-\frac{2k}{k+1}}).
        \]
        We substitute this into \cref{eq:rho(A_0 S)}, recalling that $m = bk$, to conclude that
        \[
        \rho(A_0, S) \le O_k(b^{-\frac{k}{k+1}} (\log b)^{r-\frac{k}{k+1}}).
        \]
        This completes the proof.
    \end{proof}

    Next, we combine \cref{convex-position} with the ``dropping to a subspace'' argument (\cref{dropping-to-subspace}) to deduce \cref{convex-1/2}.
    \begin{proof}[\textbf{Proof of \cref{convex-1/2}}]
        Fix an arbitrary $\eps \in (0, 1)$. We will prove that if $n$ is sufficiently large in terms of $k$ and $\eps$ then $\prob{\xi_1 a_1 + \ldots + \xi_n a_n \in S} \le (2\sqrt{2/\pi} + \eps) n^{-1/2}$.

        We apply \cref{dropping-to-subspace} to the sequence $A = (a_1, \ldots, a_n)$ with $b = \lfloor \eps n/(k(k+1))\rfloor + 1$. As a result, we obtain a subsequence $A' = A[I]$ of size at least $(1-\eps/2)n$, such that all elements of $A'$ lie in a linear subspace $V' \subseteq \RR^k$, and the basis packing number of $A'$ inside $V'$ is at least $b = \Omega_{k, \eps}(n)$.
        Conditioning on the outcomes of the random variables $(\xi_i)_{i \in [n] \setminus I}$, we have
        \[
        \prob{\xi_1 a_1 + \ldots + \xi_n a_n \in S} \le \sup_{x \in \RR^k} \prob{\sum_{i \in I} \xi_i a_i \in (S - x) \cap V'}. 
        \]
        Fix an arbitrary $x \in \RR^k$, and let $\ell = \dim V'$. First we consider the case $\ell \ge 2$. Applying \cref{convex-position} to $A'$ and $(S-x)\cap V'$, we conclude that
        \[
        \prob{\sum_{i \in I} \xi_i a_i \in (S - x) \cap V'} \le O_{\ell}\left(b^{-\frac{\ell}{\ell+1}} (\log b)^{C_{\ell}}\right). 
        \] 
        Since $2 \le \ell \le k$ and $b = \Omega_{k, \eps}(n)$, this bound is at most $2\sqrt{2/\pi} \cdot n^{-1/2}$ when $n$ is sufficiently large (in terms of $k$ and $\eps$).

        Next, we deal with the case $\ell = 1$. In this case, $(S - x) \cap V'$ is a set of points in convex position on a line. Then it contains at most $2$ points, and the desired probability can be bounded by the classical Erd\H{o}s--Littlewood--Offord theorem:
        \[
        \prob{\sum_{i \in I} \xi_i a_i \in (S - x) \cap V'} \le 2 \cdot 2^{-|I|}\binom{|I|}{\big\lfloor|I|/2\big\rfloor} = \left(2\sqrt{2/\pi} + o(1)\right)|I|^{-1/2}.
        \]
        Since $|I| \ge (1-\eps/2)n$, one can check that this expression is at most $(2\sqrt{2/\pi} + \eps)n^{-1/2}$ when $n$ is sufficiently large (in terms of $\eps$).
    \end{proof}
    
    \section{Algebraic preliminaries} 
    \label{sec:algebraic-preliminaries}
    Now, for the rest of the paper we turn our attention to \cref{varieties-1/d} and its corollaries. We start with some preliminaries from algebraic geometry and number theory.

    \subsection{Algebraic geometry}

    In this subsection, we review some basic concepts and facts from algebraic geometry. We loosely follow the exposition in \cite[Section 7.1]{cohen-moshkovitz-23}, and refer to \cite[Chapter 1]{hartshorne-77} for more details.

    An \textit{affine algebraic variety} $S \subseteq \CC^k$ (a \textit{variety}, for short) is the set of common zeros of a finite collection of polynomials $f_1, \ldots, f_m \in \CC[x_1, \ldots, x_k]$:
    \[
    S = \{(x_1, \ldots, x_k) \in \CC^k : f_1(x_1, \ldots, x_k) = \ldots = f_m(x_1, \ldots, x_k) = 0\}.
    \]
    A variety is called \textit{irreducible} if it cannot be written as a union of two proper subvarieties. Each variety $S$ can be uniquely written as the union of irreducible subvarieties $S_1 \cup \ldots \cup S_m$, such that $S_i \not\subseteq S_j$ for any $i \neq j$. Varieties $S_1, \ldots, S_m$ are called the \textit{irreducible components} of $S$.
    
    The \textit{dimension} of an irreducible variety $S$ is the maximal integer $\ell$ such that there exists a chain of non-empty irreducible subvarieties $S_0 \subsetneq S_1 \subsetneq \ldots \subsetneq S_\ell = S$. The dimension $\dim S$ of an arbitrary variety $S$ is defined as the maximum dimension of its irreducible components. By convention, the empty variety is reducible and has dimension $-\infty$.

    The \textit{codimension} $\codim S$ of a variety $S \subseteq \CC^k$ is defined as $k - \dim S$. 

    The \textit{degree} of an irreducible variety $S$ is the cardinality of the intersection of $S$ with a ``generic'' affine subspace of dimension $\codim S$ (a well-defined positive integer). The degree $\deg S$ of an arbitrary variety $S$ is defined as the sum of the degrees of its irreducible components. By convention, the degree of the empty variety is $0$.
    
    \begin{fact} \label{unions}
        For any two varieties $S, T \subseteq \CC^k$
        \[
        \deg(S \cup T) \le \deg(S) + \deg(T).
        \]
    \end{fact}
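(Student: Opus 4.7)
The plan is to prove this by unpacking the definition of degree in terms of irreducible components, and then observing that the irreducible components of $S \cup T$ form a sub-multiset of the combined irreducible components of $S$ and $T$.

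First I would write the irreducible decompositions $S = S_1 \cup \ldots \cup S_a$ and $T = T_1 \cup \ldots \cup T_b$, so that by definition $\deg(S) = \sum_{i=1}^a \deg(S_i)$ and $\deg(T) = \sum_{j=1}^b \deg(T_j)$. Then $S \cup T = S_1 \cup \ldots \cup S_a \cup T_1 \cup \ldots \cup T_b$ is a presentation of $S \cup T$ as a union of irreducible subvarieties, but this presentation need not be the irreducible decomposition, because some $S_i$ may be contained in some $T_j$ (or vice versa, or two of them may coincide).

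The key observation is that the irreducible components of $S \cup T$ are precisely the maximal elements of the collection $\{S_1, \ldots, S_a, T_1, \ldots, T_b\}$ under inclusion: dropping any non-maximal element does not change the union, while maximal irreducible subvarieties are pairwise incomparable and hence form the irreducible decomposition. Therefore
\[
\deg(S \cup T) = \sum_{V \text{ maximal}} \deg(V) \le \sum_{i=1}^a \deg(S_i) + \sum_{j=1}^b \deg(T_j) = \deg(S) + \deg(T),
\]
where the inequality simply drops nonnegative terms corresponding to the non-maximal components. The edge case where $S$ or $T$ is empty is immediate from the convention that the empty variety has degree $0$.

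There is no real obstacle here; the statement is essentially a bookkeeping fact about the definition. The only thing one needs to verify carefully is the characterization of the irreducible components of a union, which is standard and follows from the uniqueness of the irreducible decomposition recalled just before the statement.
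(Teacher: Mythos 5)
Your proof is correct. The paper states this as a bare fact without proof, and your argument is exactly the standard one it implicitly relies on: the irreducible components of $S \cup T$ are the distinct maximal elements of the combined collection of components of $S$ and $T$ (uniqueness of the irreducible decomposition, as recalled in the paper, justifies this), and discarding the non-maximal or duplicated components only drops nonnegative terms from the sum of degrees. The only point needing a word of care is that two maximal elements may coincide as sets, in which case one keeps a single copy in the decomposition; you already flag this, and it only strengthens the inequality.
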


    \begin{fact} \label{projections}
        For any variety $S \subseteq \CC^k$ and a surjective affine-linear map $\pi: \CC^r \to \CC^k$ we have
        \[
        \codim \pi^{-1}(S) = \codim S, \quad \deg \pi^{-1}(S) = \deg S.
        \]
        Furthermore, if $S$ is irreducible, then $\pi^{-1}(S)$ is also irreducible.
    \end{fact}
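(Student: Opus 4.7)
The plan is to reduce first to the case where $\pi$ is a coordinate projection. Any surjective affine-linear map $\pi: \CC^r \to \CC^k$ factors as $\pi = \pi_0 \circ \varphi$, where $\varphi$ is a bijective affine-linear self-map of $\CC^r$ and $\pi_0: \CC^r \to \CC^k$ is the projection onto the first $k$ coordinates. Since bijective affine-linear maps are isomorphisms of affine varieties, they preserve dimension, degree, and irreducibility (as well as the irreducible decomposition); so it suffices to handle the case $\pi = \pi_0$, in which $\pi_0^{-1}(S) = S \times \CC^{r-k}$ as a subvariety of $\CC^r \cong \CC^k \times \CC^{r-k}$.

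For the dimension statement, I would invoke the standard identity $\dim(X \times Y) = \dim X + \dim Y$ for affine varieties, giving $\dim \pi_0^{-1}(S) = \dim S + (r-k)$ and hence $\codim \pi_0^{-1}(S) = r - (\dim S + (r-k)) = \codim S$. For irreducibility, I would use the classical fact that the product of two irreducible affine varieties over an algebraically closed field is irreducible; applied to $S$ and $\CC^{r-k}$, this yields irreducibility of $S \times \CC^{r-k}$.

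The degree claim is the step I expect to be the main obstacle, as it requires a transversality/genericity argument. Writing $\ell = \dim S$, I would pick a generic affine subspace $H' \subseteq \CC^r$ of dimension $k - \ell = \codim \pi_0^{-1}(S)$ and argue two points. First, $\pi_0|_{H'}$ is injective, because the underlying linear subspace of $H'$ has trivial intersection with the $(r-k)$-dimensional kernel of $\pi_0$ generically (the dimension count $(k-\ell) + (r-k) = r - \ell \le r$ gives the required generic transversality inside $\CC^r$). Second, the image $\pi_0(H')$ is itself a generic affine subspace of dimension $k - \ell$ in $\CC^k$, so that $|\pi_0(H') \cap S| = \deg S$ by definition. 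The restriction $\pi_0|_{H'}$ then yields a bijection between $H' \cap \pi_0^{-1}(S)$ and $\pi_0(H') \cap S$, establishing $\deg \pi_0^{-1}(S) = \deg S$ when $S$ is irreducible.

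Finally, for reducible $S$ with irreducible decomposition $S = S_1 \cup \dots \cup S_m$, I would argue that $\pi^{-1}(S) = \pi^{-1}(S_1) \cup \dots \cup \pi^{-1}(S_m)$ is the irreducible decomposition of $\pi^{-1}(S)$: each $\pi^{-1}(S_i)$ is irreducible by the irreducible case already proved, and the surjectivity of $\pi$ forces $\pi(\pi^{-1}(S_i)) = S_i$, so any containment $\pi^{-1}(S_i) \subseteq \pi^{-1}(S_j)$ would give $S_i \subseteq S_j$, contradicting the irreducible decomposition of $S$. Summing over components then gives $\deg \pi^{-1}(S) = \sum_i \deg \pi^{-1}(S_i) = \sum_i \deg S_i = \deg S$, completing the proof.
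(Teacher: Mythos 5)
There is nothing to compare against here: the paper states this as a background fact in its algebraic preliminaries (with pointers to Hartshorne and to Cohen--Moshkovitz) and gives no proof, so your write-up is a self-contained justification of something the paper treats as standard. Your argument is correct. The reduction to the coordinate projection $\pi_0$ is fine, with the small remark that this step also uses that bijective affine-linear maps preserve \emph{degree}, not just dimension and irreducibility; this holds because such a map acts on the parameter space of affine $(k-\ell)$-planes and carries a dense open (``generic'') family of planes to a dense open family, so the generic intersection count is unchanged. The dimension and irreducibility of $S\times\CC^{r-k}$ are the standard product facts over an algebraically closed field. In the degree computation, the one point you state loosely is that $\pi_0(H')$ is again a \emph{generic} $(k-\ell)$-plane in $\CC^k$: to make this precise, note that on the dense open set of affine $(k-\ell)$-planes $H'\subseteq\CC^r$ whose direction space meets $\ker\pi_0$ trivially, the assignment $H'\mapsto\pi_0(H')$ is a surjective morphism onto the space of affine $(k-\ell)$-planes in $\CC^k$, so the preimage of any dense open subset of the target is a nonempty open, hence dense, subset of the (irreducible) parameter space; intersecting it with the transversality locus produces the generic $H'$ for which your bijection $H'\cap\pi_0^{-1}(S)\leftrightarrow\pi_0(H')\cap S$ gives $\deg\pi_0^{-1}(S)=\deg S$. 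Your final reduction of the reducible case to components is also correct (surjectivity of $\pi$ rules out containments among the $\pi^{-1}(S_i)$), and the codimension identity for reducible $S$ follows in the same way since the dimension of a union is the maximum over its components.
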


    \begin{fact} \label{one-polynomial}
        Let $f \in \CC[x_1, \ldots, x_k]$ be an irreducible (over $\CC$) polynomial of degree $d \ge 1$. Then $S = \{x \in \CC^k : f(x) = 0\}$ is an irreducible variety of dimension $k-1$ and degree $d$.
    \end{fact}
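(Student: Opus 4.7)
The plan is to verify the three claims — irreducibility, $\dim S = k-1$, and $\deg S = d$ — separately, using standard commutative algebra together with a direct intersection computation.

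For irreducibility, I would appeal to the Nullstellensatz: since $\CC[x_1, \ldots, x_k]$ is a unique factorisation domain and $f$ is irreducible, the principal ideal $(f)$ is prime, hence radical, so the vanishing ideal satisfies $I(S) = \sqrt{(f)} = (f)$. If $S$ were reducible as $S_1 \cup S_2$ with both $S_i \subsetneq S$, one could pick $g_i \in I(S_i) \setminus (f)$; then $g_1 g_2$ vanishes on $S$, forcing $g_1 g_2 \in (f)$, which contradicts primality of $(f)$. For the dimension, I would invoke Krull's principal ideal theorem applied to $(f) \subset \CC[x_1, \ldots, x_k]$: every minimal prime over the nonzero non-unit principal ideal $(f)$ has height $1$, and since $(f)$ is itself prime, this forces $\codim S = 1$, i.e., $\dim S = k-1$.

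For the degree, I would use the definition given in the excerpt: $\deg S$ equals $|S \cap L|$ for a generic affine line $L \subset \CC^k$ (noting $\codim S = 1$). Parametrising $L = \{p + tv : t \in \CC\}$, the restriction $f(p+tv)$ is a polynomial in $t$ whose leading coefficient is $f_d(v)$, where $f_d$ denotes the nonzero homogeneous degree-$d$ part of $f$. For generic $v$ this coefficient is nonzero, so the restriction has degree exactly $d$ in $t$ and hence $d$ roots in $\CC$ counted with multiplicity. For generic $(p, v)$ these $d$ roots are also distinct, since the discriminant of $f(p+tv)$ with respect to $t$ is a polynomial in $(p,v)$ that does not vanish identically: in characteristic zero an irreducible $f$ of positive degree is coprime to each of its nonzero partial derivatives, which ensures that some restriction $f(p+tv)$ has no repeated root.

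The main subtlety is this last genericity argument — specifically, verifying that the discriminant of $f(p+tv)$ in $t$ is a non-identically-zero polynomial in the line parameters $(p,v)$. Once this is established, a standard Zariski-open/dense argument isolates the generic intersection count as exactly $d$, completing the proof.
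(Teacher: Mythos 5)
Your proposal is correct in substance, but note that the paper does not prove this statement at all: it is recorded as a background \emph{fact} in the algebraic preliminaries, with the reader referred to standard sources (Hartshorne, Chapter 1, and the exposition of Cohen--Moshkovitz), so there is no in-paper argument to compare against. What you supply is a self-contained verification: irreducibility of $S$ from primality of the principal ideal $(f)$ in the UFD $\CC[x_1,\dots,x_k]$ together with the Nullstellensatz identification $I(S)=(f)$; $\dim S=k-1$ from Krull's principal ideal theorem plus the dimension theory of affine algebras over a field; and $\deg S=d$ by intersecting with a generic affine line, which matches the paper's definition of degree since $\codim S=1$. The only step you leave thin is the one you flag yourself: that the discriminant of $f(p+tv)$ in $t$ is not identically zero as a function of $(p,v)$. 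Coprimality of $f$ with its partial derivatives does not by itself hand you a good line; a clean way to finish is to fix a direction $v$ with $f_d(v)\neq 0$ and $v\cdot\nabla f\not\equiv 0$ (possible since the characteristic is zero and $d\ge 1$), note that $v\cdot\nabla f$ has degree at most $d-1$, hence is not divisible by the irreducible $f$, so $Z_v=\{x: f(x)=(v\cdot\nabla f)(x)=0\}$ is a proper subvariety of $S$ of dimension at most $k-2$; the union of lines in direction $v$ through points of $Z_v$ then has dimension at most $k-1$, so for $p$ outside a proper closed subset the restriction $f(p+tv)$ has nonzero leading coefficient $f_d(v)$ and no repeated root, i.e.\ the line meets $S$ in exactly $d$ distinct points, and this holds on a dense open set of parameters $(p,v)$. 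With that genericity argument inserted, your proof is complete and establishes exactly the stated fact under the paper's definitions.
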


    \begin{fact}[Generalized B\'ezout's theorem {\cite[Example 12.3.1]{fulton-84}; see also \cite{cohen-moshkovitz-23, bukh-tsimerman-12}}] \label{Bezout}       
        For any two varieties $S, T \subseteq \CC^k$,
        \[
        \deg(S \cap T) \le \deg(S) \cdot \deg(T).
        \]
    \end{fact}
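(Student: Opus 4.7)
The plan is to reduce the statement to the case where $S$ and $T$ are both irreducible, and then to prove the irreducible case using a hypersurface-slicing lemma together with an induction on the codimension of $T$.

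For the reduction, decompose $S = \bigcup_i S_i$ and $T = \bigcup_j T_j$ into irreducible components, so $S \cap T = \bigcup_{i,j}(S_i \cap T_j)$. Iterating Fact~\ref{unions} yields $\deg(S \cap T) \le \sum_{i,j} \deg(S_i \cap T_j)$, and combined with $\deg S = \sum_i \deg S_i$ and $\deg T = \sum_j \deg T_j$, it suffices to prove the bound for irreducible $S$ and $T$. If one is contained in the other, the inequality is immediate (using that $\deg \ge 1$ for any nonempty irreducible variety); so assume neither does.

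The key ingredient is the following hypersurface-slicing lemma: if $V \subseteq \CC^k$ is an irreducible variety of dimension $d \ge 1$ and degree $e$, and $f \in \CC[x_1, \dots, x_k]$ is a polynomial of degree $h$ that does not vanish on $V$, then every irreducible component of $V \cap \{f = 0\}$ has dimension $d-1$ (by Krull's Hauptidealsatz) and $\deg(V \cap \{f = 0\}) \le eh$. The degree bound follows by intersecting with a generic affine subspace $L$ of codimension $d-1$: then $V \cap L$ is a (possibly reducible) curve of degree $e$, $L \cap \{f=0\}$ is a hypersurface in $L \cong \CC^{k-d+1}$ of degree $h$, and the classical Bézout theorem for a curve and a hypersurface gives $|V \cap L \cap \{f=0\}| \le eh$; for generic $L$ this count equals $\deg(V \cap \{f=0\})$.

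With this lemma in hand, I would induct on $\codim T$. The base case $T = \CC^k$ is trivial. For the inductive step, pick a polynomial $f$ of degree at most $\deg T$ vanishing on $T$ but not on $S$; the lemma gives $\deg(S \cap \{f=0\}) \le \deg S \cdot \deg T$, and each irreducible component of $S \cap \{f=0\}$ has dimension $\dim S - 1$. The main obstacle is that the set-theoretic inclusion $S \cap T \subseteq S \cap \{f=0\}$ does not by itself force $\deg(S \cap T) \le \deg(S \cap \{f=0\})$, since a subvariety can have larger degree than an ambient variety (for instance, many points on a curve). The cleanest resolution is via intersection cycles in the sense of Fulton~\cite{fulton-84}: working in the projective closure of $\CC^k$, one shows that the cycle-theoretic intersection product of $S$ and $T$ is a well-defined cycle $\sum_i m_i [C_i]$ supported on $S \cap T$ with $\sum_i m_i \deg(C_i) \le \deg S \cdot \deg T$, and since each $m_i \ge 1$ and the $C_i$ exhaust the irreducible components of $S \cap T$, Fact~\ref{unions} gives $\deg(S \cap T) \le \sum_i \deg(C_i) \le \deg S \cdot \deg T$, as desired.
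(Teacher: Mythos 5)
The paper itself gives no proof of \cref{Bezout}: it is quoted as a black-box fact from Fulton \cite{fulton-84} (Example 12.3.1, the refined B\'ezout theorem) and the expositions in \cite{cohen-moshkovitz-23,bukh-tsimerman-12}. Measured against that, your proposal is not an independent proof. Your reduction to irreducible $S,T$ via \cref{unions} is fine, and your diagnosis of why the slicing induction stalls is exactly right (degree is not monotone under inclusion, so $S\cap T\subseteq S\cap\{f=0\}$ gives nothing). But the step you then invoke to rescue it --- ``the intersection product of $S$ and $T$ is a cycle $\sum_i m_i[C_i]$ supported on $S\cap T$, with $\sum_i m_i\deg(C_i)\le\deg S\cdot\deg T$, every irreducible component of $S\cap T$ appearing with $m_i\ge 1$'' --- \emph{is} Fulton's refined B\'ezout theorem, i.e.\ precisely the cited result. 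So the hypersurface-slicing lemma and the induction on $\codim T$ end up as dead scaffolding, and the honest content of your argument is ``reduce to components, then cite Fulton'', which is what the paper does anyway (and your deduction from the cycle statement is correct, modulo the routine passage from $\CC^k$ to the projective closure, where one checks that closures of components of $S\cap T$ occur among the distinguished varieties and that degrees are preserved).

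If you want a genuinely self-contained proof rather than a citation, the standard elementary route is different from the one you started: use the diagonal trick. Identify $S\cap T$ with $(S\times T)\cap\Delta$ in $\CC^{2k}$, note $\deg(S\times T)=\deg S\cdot\deg T$, and then intersect with the $k$ hyperplanes cutting out the diagonal $\Delta$ one at a time, using the lemma that for any variety $V$ and hyperplane $H$ one has $\deg(V\cap H)\le\deg V$ (for each irreducible component, either it lies in $H$ or the intersection drops dimension and its degree is at most that of the component). Because hyperplane sections never increase degree, this sidesteps exactly the monotonicity obstacle you ran into when slicing by the degree-$\le\deg T$ hypersurfaces through $T$; it is essentially the argument in \cite{bukh-tsimerman-12}. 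Also note that your inductive step quietly assumes that an irreducible $T$ is cut out set-theoretically by polynomials of degree at most $\deg T$, which is itself a nontrivial theorem and would need justification if that route were pursued.
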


    \begin{proposition} \label{infinite-intersection}
        Let $\{S_i\}_{i \in I}$ be a (not necessarily finite) collection of varieties in $\CC^k$. Suppose that $\deg(S_i) \le d$ for all $i \in I$. Then the set $S = \bigcap_{i \in I} S_i$ is a variety of degree at most $d^k$.
    \end{proposition}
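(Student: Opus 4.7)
My plan has two parts: first show that $S$ is a variety, then bound its degree. The first part follows from the Noetherianity of the Zariski topology on $\CC^k$: consider the family $\mathcal{F} = \{\bigcap_{i \in F} S_i : F \subseteq I \text{ finite}\}$ of Zariski-closed subsets, ordered by inclusion. Any descending chain in $\mathcal{F}$ stabilizes, so $\mathcal{F}$ contains a minimal element $\bigcap_{i \in F_0} S_i$, which necessarily coincides with $\bigcap_{i \in I} S_i = S$. Hence $S$ is a finite intersection of the $S_i$'s and, in particular, a variety.

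For the degree bound, I plan to prove by induction on $\dim T$ the following strengthening, stated uniformly over all families of bounded-degree varieties: for any variety $T \subseteq \CC^k$ and any family $\{S'_i\}_{i \in I'}$ of varieties in $\CC^k$ each of degree at most $d$,
\[
\deg\!\left(T \cap \bigcap_{i \in I'} S'_i\right) \le \deg(T) \cdot d^{\dim T}.
\]
The proposition follows by taking $T = \CC^k$, which is irreducible of dimension $k$ and degree $1$, with the original family $\{S_i\}_{i \in I}$. The base case $T = \emptyset$ is trivial. For the inductive step, I would decompose $T$ into its irreducible components $V_1, \ldots, V_r$, so that $\deg T = \sum_l \deg V_l$; by \cref{unions} it suffices to show $\deg(V \cap \bigcap_{i} S'_i) \le \deg(V) \cdot d^{\dim T}$ for each component $V$. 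If $V \subseteq \bigcap_{i \in I'} S'_i$ the bound is immediate. Otherwise, pick $i^* \in I'$ with $V \not\subseteq S'_{i^*}$ and set $V' := V \cap S'_{i^*}$; this is a proper closed subset of the irreducible variety $V$, so $\dim V' < \dim V$, and Bezout's inequality (\cref{Bezout}) yields $\deg V' \le \deg V \cdot d$. Since $V \cap \bigcap_{i \in I'} S'_i = V' \cap \bigcap_{i \in I' \setminus \{i^*\}} S'_i$, applying the inductive hypothesis to $V'$ together with the smaller (but still degree-$\le d$) family $\{S'_i\}_{i \in I' \setminus \{i^*\}}$ gives
\[
\deg\!\left(V \cap \bigcap_{i \in I'} S'_i\right) \le \deg V' \cdot d^{\dim V'} \le \deg V \cdot d \cdot d^{\dim V - 1} \le \deg V \cdot d^{\dim T}.
\]

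The main subtlety I expect is the formulation of the inductive statement: it must be stated uniformly in the family $\{S'_i\}_{i \in I'}$ rather than only for the original family, because the induction removes one element of the family at each step when one drops from $V$ to $V' = V \cap S'_{i^*}$. Beyond this bookkeeping, the argument uses only the dimension-reducing step on an irreducible variety, \cref{unions}, and \cref{Bezout}.
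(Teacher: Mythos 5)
Your proof is correct and takes essentially the same route as the paper's: both establish a strengthened statement by induction on dimension, cutting an irreducible piece by a single $S_i$ that meets it properly, applying B\'ezout (\cref{Bezout}) and summing over irreducible components via \cref{unions}; the paper simply folds the ``$S$ is a variety'' claim into the induction rather than deriving it separately from Noetherianity. One minor remark: the uniformity over the family that you flag as the main subtlety is not actually needed, since $V' \subseteq S'_{i^*}$ means dropping $i^*$ does not change the intersection --- the paper keeps the fixed set $S$ throughout its induction.
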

    \begin{proof}
        We will prove a stronger statement: for any irreducible variety $T \subseteq \CC^k$ of dimension at most $\ell$ and degree at most $d_0$, the intersection $T \cap S$ is a variety of degree at most $d_0 d^{\ell}$. The proposition then follows from this statement applied with $T = \CC^k$.

        We argue by induction on $\ell = \dim T$. If $T \cap S = T$, there is nothing to prove. Otherwise, there exists $i \in I$ such that $T \cap S_i \subsetneq T$. Let $T_1, \ldots, T_m$ be the irreducible components of $T \cap S_i$. Each of them has dimension at most $\ell - 1$, and by B\'ezout's theorem (\cref{Bezout}) we have
        \[
        \sum_{j = 1}^m \deg(T_j) = \deg(T \cap S_i) \le d_0 d.
        \]
        By the induction hypothesis, we know that for each $1 \le j \le m$ the set $T_j \cap S$ is a variety, and that
        \[
        \deg(T_j \cap S) \le \deg(T_j) \cdot d^{\ell-1}.
        \] 
        As $T \cap S = \bigcup_{j = 1}^m (T_j \cap S)$, by \cref{unions} we conclude that
        \[
        \deg(T \cap S) \le \sum_{j = 1}^m \deg(T_j \cap S) \le d^{\ell-1}\sum_{j = 1}^m \deg(T_j) \le d_0 d^\ell. \qedhere
        \]
    \end{proof}

    \subsection{Number theory} \label{sec:number-theory}

    A large area of research in number theory is concerned with counting integer solutions to polynomial equations or, more generally, integer points on algebraic varieties. The most basic result in this direction is the Schwartz--Zippel lemma.

    \begin{proposition}[Schwartz--Zippel lemma for varieties, see for example {\cite[Lemma 14]{bukh-tsimerman-12}}]\label{trivial-bound}
        Let $S \subseteq \CC^k$ be a variety of dimension $\ell$ and degree $d$. Then for any $B \ge 1$,
        \[
        N_S(B) \le d \cdot (2B+1)^{\ell}.
        \]
    \end{proposition}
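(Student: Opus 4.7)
I will prove the bound by induction on the dimension $\ell$, where the inductive statement is: for every variety $S \subseteq \CC^k$ with $\dim S \le \ell$ and $\deg S \le d$, and every $B \ge 1$, we have $N_S(B) \le d \cdot (2B+1)^\ell$. The base case $\ell = 0$ is immediate: a zero-dimensional variety is a finite set of at most $d$ points (this is essentially the definition of degree), so $N_S(B) \le d = d \cdot (2B+1)^0$.

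For the inductive step ($\ell \ge 1$), I would decompose $S$ into its irreducible components $S = T_1 \cup \dots \cup T_m$. Since degrees sum (by definition) and $N_S(B) \le \sum_j N_{T_j}(B)$, it suffices to prove the bound for each irreducible component separately. So fix an irreducible component $T$ with $\dim T = \ell_T \le \ell$ and $\deg T = d_T$. If $\ell_T < \ell$ then the induction hypothesis applied at level $\ell - 1$ already gives $N_T(B) \le d_T (2B+1)^{\ell_T} \le d_T (2B+1)^\ell$, so we may assume $\ell_T = \ell$.

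The key geometric step is then to find a coordinate direction along which $T$ can be sliced. Since $T$ is irreducible with $\dim T = \ell \ge 1$, $T$ cannot be a single point, so at least one of the coordinate projections $\pi_i : T \to \CC$, $(x_1,\dots,x_k) \mapsto x_i$, is non-constant. The Zariski closure of $\pi_i(T)$ is then an irreducible subvariety of $\CC$ containing more than one point, hence all of $\CC$; equivalently, $T$ is not contained in any hyperplane of the form $\{x_i = c\}$. For each $t \in \CC$, let $T_t := T \cap \{x_i = t\}$. By irreducibility of $T$, the containment $T_t \subseteq T$ is strict, so $\dim T_t \le \ell - 1$, and by generalised B\'ezout (\cref{Bezout}) we have $\deg T_t \le d_T \cdot 1 = d_T$.

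Now I would finish by slicing: only integer values of $t$ contribute to $N_T(B)$, so
\[
N_T(B) = \sum_{\substack{t \in \ZZ \\ |t| \le B}} N_{T_t}(B) \le (2B+1) \cdot \max_t N_{T_t}(B) \le (2B+1) \cdot d_T (2B+1)^{\ell-1} = d_T (2B+1)^\ell,
\]
where the second inequality applies the induction hypothesis (with $\ell-1$) to each $T_t$, viewed as a variety in the hyperplane $\{x_i=t\} \cong \CC^{k-1}$. Summing over irreducible components then completes the inductive step. The only non-routine ingredient is the existence of a ``slicing direction'' $x_i$, but this is forced by irreducibility and positive dimension as explained; everything else is bookkeeping with \cref{unions,Bezout}.
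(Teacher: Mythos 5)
Your proof is correct. Note that the paper does not actually prove \cref{trivial-bound}: it is quoted as a known fact, with a citation to Bukh--Tsimerman. Your argument is essentially the standard one (and close to the proof in the cited reference): induct on dimension, reduce to irreducible components using additivity of degree and subadditivity of point counts (as in \cref{unions}), pick a coordinate $x_i$ that is non-constant on an irreducible top-dimensional component $T$ (possible since $\dim T\ge 1$), and slice by the integer hyperplanes $\{x_i=t\}$, $|t|\le B$; irreducibility forces each slice to be a proper subvariety, hence of dimension at most $\ell-1$, while \cref{Bezout} keeps its degree at most $\deg T$, so the induction closes with the factor $(2B+1)$ coming from the at most $2B+1$ integer slices. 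All the steps you use are justified by the facts already recorded in the paper (the base case follows from the paper's definition of degree for zero-dimensional varieties, and the degree of a slice viewed inside the hyperplane $\{x_i=t\}\cong\CC^{k-1}$ agrees with its degree in $\CC^k$, which is the standard invariance implicitly used). So the proposal supplies a self-contained and correct proof of a statement the paper treats as a black box.
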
    
    Since bijective affine-linear maps preserve dimension and degree (by \cref{projections}), in terms of the density function this means $d_S(B) \le d \cdot (2B+1)^{-(k-\ell)}$.

    \cref{trivial-bound} is sharp when $S$ is a union of $d$ axis-parallel affine subspaces of dimension $\ell$ (and it can be approximately sharp whenever $S$ contains a dimension-$\ell$ affine subspace as one of its irreducible components). However, one can obtain much stronger bounds by making certain assumptions on $S$.  We will need a few different results in this direction.
    
    First, we can make the assumption that $S$ is irreducible. The following bound in this setting was proved by Pila~\cite{pila-96} (improving on his slightly weaker bound in \cite{pila-95}). It was proved using the so-called \emph{Bombieri--Pila determinant method} (famously introduced by Bombieri and Pila~\cite{bombieri-pila-89} to prove a similar theorem for curves in $\mb R^2$). We refer the reader to \cite{bloom-lichtman-23} for a recent survey on this topic.

    \begin{theorem}[{Pila \cite{pila-95, pila-96}}] \label{Pila}
        Let $S \subseteq \CC^k$ be an irreducible variety of dimension $\ell$ and degree $d$. Then for any $B \ge 2$,
        \[
        N_S(B) = O_{d, k}\big(B^{\ell-1+1/d} (\log B)^{C_d}\big)
        \]
        for some constant $C_d$ depending only on $d$.
    \end{theorem}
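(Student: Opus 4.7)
The plan is to follow the Bombieri--Pila \emph{determinant method}, combined with induction on the dimension $\ell$ of $S$. The base case $\ell=0$ is trivial since $|S|\le d$ by the trivial bound (\cref{trivial-bound}). For the inductive step the induction hypothesis is the bound in the statement, with constants allowed to depend on $d$ and $k$. Before starting the induction, it will also be helpful to reduce to the hypersurface case by a generic linear projection $\CC^k\to\CC^{\ell+1}$: the projection of $S$ has the same dimension $\ell$ and degree $\le d$, and each point in the image has $O_{d,k}(1)$ preimages except on a proper subvariety, which can be absorbed into the induction.

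The core engine is a \textit{zero-lemma}: suppose $N$ integer points $p_1,\dots,p_N\in[-B,B]^{\ell+1}$ all lie on the hypersurface $S$, and let $f_1,\dots,f_M$ be a basis for polynomials of degree at most $D$ restricted to $S$, where $M=O_d(D^\ell)$. If $N<M$ the bound is already trivial. Otherwise, consider any $M\times M$ submatrix of $(f_j(p_i))_{i,j}$: either its determinant is a nonzero integer (so bounded below by $1$), or it vanishes (so the corresponding points lie on an auxiliary hypersurface $\{g=0\}$ of degree $\le D$ not containing $S$). Hadamard's inequality gives an upper bound of roughly $(C_d B)^{DM}$ on $|\det|$ in the nonzero case, and balancing this against the lower bound $1$ forces vanishing as soon as $N$ exceeds a threshold of order $B^{\ell-1+1/d}$, after an appropriate choice of $D\sim\log B$.

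The induction step then proceeds by covering $\ZZ^{\ell+1}\cap S\cap[-B,B]^{\ell+1}$ by clusters of size at most $T=O_{d}(B^{\ell-1+1/d}(\log B)^{C})$, each contained in an auxiliary hypersurface $H_j$ of degree $D$ with $S\not\subseteq H_j$. Since $S$ is irreducible, each $S\cap H_j$ is a proper subvariety of $S$, with dimension at most $\ell-1$ and degree at most $dD$ by B\'ezout (\cref{Bezout}), though here the degree can be $\Theta(\log B)$ rather than $O_d(1)$, which is a nuisance I will have to manage. Decomposing each $S\cap H_j$ into its irreducible components via \cref{unions}, applying the induction hypothesis to each component, and summing, I obtain the claimed bound after pulling back through the initial projection.

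The principal obstacle will be twofold. First, matching the exponent exactly to $\ell-1+1/d$, rather than some weaker quantity, depends delicately on the optimal choice of $D$ as a function of $B$; this is the quantitative refinement in Pila's work over the earlier Bombieri--Pila bound for plane curves. Second, and more seriously, the auxiliary hypersurfaces $H_j$ will have degree $\sim\log B$, so the inductive hypothesis has to be set up uniformly in the degree parameter, or else the logarithmic exponent $C_d$ blows up with $\ell$. The standard way to handle this is to prove a stronger induction hypothesis allowing degree up to a polylog in $B$, carefully tracking the $\log$-factor arithmetic so that it telescopes into a single power $(\log B)^{C_d}$ at the end rather than compounding across the $\ell$ induction steps.
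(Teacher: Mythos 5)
This statement is not proved in the paper at all: it is imported verbatim from Pila's work \cite{pila-95,pila-96}, so there is no internal argument to compare yours against, and what you are really attempting is a reconstruction of Pila's proof. Your reconstruction has the right skeleton (determinant method, auxiliary hypersurfaces, B\'ezout, induction on dimension), but the step that is supposed to do all the work is wrong as stated. If the $M\times M$ interpolation determinant is a nonzero integer, then it is at least $1$, and Hadamard's inequality over the whole box gives an upper bound of size roughly $(C_dB)^{DM}$, which is astronomically \emph{larger} than $1$; comparing these two bounds forces nothing, and no threshold of order $B^{\ell-1+1/d}$ can emerge from it. The actual engine of the Bombieri--Pila/Pila method is a local estimate that you have omitted entirely: one covers the integer points by small patches (boxes of side $B^{1-\eta}$, or arcs/patches on which $S$ admits a smooth algebraic parametrization coming from its degree-$d$ equation), orders the basis elements by degree, and shows via Taylor expansion (a Wronskian/mean-value-type argument) that the determinant is bounded by its trivial size times a factor $(\text{patch size}/B)^{E}$, where $E$ grows superlinearly in $M$ because the determinant vanishes to high order when the evaluation points coalesce. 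It is this gain that makes the determinant smaller than $1$ on each patch and hence zero, and it is the count of patches needed to cover $[-B,B]^{\ell+1}$ --- together with the optimal choice $D\sim\log B$ --- that produces the exponent $\ell-1+1/d$ and the power of $\log B$. Without this input your argument yields nothing beyond the trivial bound of \cref{trivial-bound}.

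Two further points are flagged but not resolved, and they are genuine work rather than bookkeeping. First, the auxiliary intersections $S\cap H_j$ have degree up to $dD\sim d\log B$, so the induction hypothesis as you state it (constants depending on the degree) does not apply uniformly; Pila avoids this by an explicit, uniform-in-degree treatment (ultimately reducing to the plane-curve case, where the degree dependence is tracked explicitly), and simply ``allowing degree up to a polylog'' changes the shape of the bound you are inducting on. Second, components of $S\cap H_j$ of low degree --- above all linear components, i.e.\ $(\ell-1)$-planes contained in $S$ --- get no savings from the induction hypothesis (a degree-$1$ component of dimension $\ell-1$ may carry $\Theta(B^{\ell-1})$ integer points), so they must be counted separately against the number of auxiliary hypersurfaces and the number of such components per hypersurface; this is exactly the delicate part of the higher-dimensional argument. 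Finally, a minor repair: the ``generic'' projection $\CC^k\to\CC^{\ell+1}$ must be chosen with integer coefficients of bounded height so that integer points map to integer points in a box of comparable size, and so that it is birational onto its image (otherwise the image may have degree $<d$, and since $1/d'>1/d$ for $d'<d$ the bound you get for the image is too weak to pull back).
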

    Again, \cref{projections} allows to rewrite this in terms of the density function as 
    \[
    d_S(B) \le O_{d, k}\big(B^{-(k-\ell+1-1/d)}(\log B)^{C_d}\big).
    \]
    
    The example $S = \{x \in \CC^k : x_1 = x_2^d\}$ shows that \cref{Pila} is sharp up to logarithmic factors, i.e., one cannot hope to remove the ``$1/d$'' term in the exponent, in general. However, there is a general belief that if one makes a mild assumption ruling out examples of this type, one should expect a bound of the form $N_S(B)\le B^{\ell-1+o(1)}$. A conjecture along these lines was first proposed by Heath-Brown~\cite{heath-brown-83}, and this conjecture (and its variants) are usually collectively referred to as the ``dimension growth conjecture''. Various partial results are available, see \cite{vermeulen-24} and the references therein.
    In this paper we use the following result, due to Vermeulen \cite{vermeulen-24} for $d \ge 4$ and due to Browning and Gorodnik~\cite{BG17} for $d = 2$ (using ideas of Browning, Heath-Brown and Salberger \cite{browning-heath-brown-salberger-06}; see also \cite{salberger-23-preprint}), which settles the (uniform) ``affine dimension growth conjecture'' for affine hypersurfaces of degree $d \neq 3$.

    \begin{theorem}[{Affine dimension growth conjecture for hypersurfaces; Vermeulen \cite[Theorem 1.2]{vermeulen-24} and Browning--Gorodnik \cite[Theorem 1.11]{BG17}}] \label{affine-dimension-growth}
    Consider an irreducible polynomial $f \in \CC[x_1, \ldots, x_k]$ of degree $d \neq 3$. Suppose that $f$ cannot be represented as a polynomial of two linear forms. Then, with $S\subseteq \CC^k$ as the zero set of $f$, and for any $B \ge 1$, $\eps > 0$, we have
    \[
    N_S(B) \le O_{d, k, \eps}(B^{k-2+\eps}).
    \]
    \end{theorem}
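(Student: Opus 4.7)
\textbf{Proof plan for \cref{affine-dimension-growth}.} Since this is a deep number-theoretic result that is quoted from the work of Vermeulen (for $d \ge 4$) and Browning--Gorodnik (for $d=2$), any self-contained proof would be lengthy; here I sketch the general strategy. The central tool is the $p$-adic \emph{determinant method} introduced (in a real-variable form) by Bombieri and Pila \cite{bombieri-pila-89} and placed in a $p$-adic/affine framework by Heath-Brown \cite{heath-brown-02}. The idea is to show that for a suitable auxiliary parameter $D$, all integer points of $S$ with coordinates bounded by $B$ lie on a union of a controlled number of auxiliary hypersurfaces of degree at most $D$ that do \emph{not} contain $S$, and then to apply B\'ezout's theorem (\cref{Bezout}) to reduce the problem to counting integer points on varieties of dimension $k-2$, where one iterates.

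Concretely, the plan for $d \ge 4$ would be: choose a prime $p$ of size $\approx B^{\theta}$ for a well-chosen $\theta = \theta(d,k,\eps)$; for each smooth $\FF_p$-point $\bar P$ of the reduction $S \bmod p$, apply the determinant method in a $p$-adic neighbourhood of $\bar P$ to produce an auxiliary polynomial of degree at most $D = O_{d,k,\eps}(1)$ vanishing at every $\ZZ$-point of $S$ reducing to $\bar P$. Summing over $\bar P$ and using the Lang--Weil-type estimate $|S(\FF_p)| \ll p^{k-2}$, one gets that all integer points of $S \cap [-B,B]^k$ lie on an auxiliary variety $S \cap T$ with $\deg T \le D$, and $\dim(S\cap T) \le k-2$. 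The hypothesis that $f$ is not a polynomial of two linear forms is used precisely to ensure that no high-multiplicity component of $S \cap T$ is a subspace responsible for too many integer points in the worst case (and to handle the projection-to-a-surface step, where this avoids the degenerate situation where the image has dimension $\le 1$ and fails the needed absolute-irreducibility properties). Iterating the bound $O_{d,k}(B^{k-2+\eps})$ over the $O_{d,k}(1)$ irreducible components of $S\cap T$ via \cref{trivial-bound} closes the induction.

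For $d = 2$, the proof is essentially independent: a quadric $f = 0$ that is not a polynomial in two linear forms has rank at least $3$, and one can count integer zeros either via a direct application of the Hardy--Littlewood circle method or via Browning--Gorodnik's geometry-of-numbers style analysis in \cite{BG17}, both of which yield the bound $O(B^{k-2+\eps})$ (in fact typically a clean $B^{k-2}$, up to logarithmic factors depending on $k$). This case is simpler because quadrics admit an explicit classification up to $\QQ$-equivalence.

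\textbf{Main obstacle.} The principal difficulty is of course the case $d = 3$: cubic hypersurfaces genuinely resist the determinant method with the parameters above, because the exponent $\theta$ that balances the point count in a $p$-adic neighbourhood against the number of $\FF_p$-points on $S$ is exactly critical for cubics. This is why the hypothesis $d \ne 3$ appears, and why Vermeulen's recent work \cite{vermeulen-24} had to extend the earlier treatment in \cite{browning-heath-brown-salberger-06} in a subtle way, using more refined $p$-adic counting and a careful analysis of the singular locus, to obtain \emph{uniform} constants in $k$. A secondary obstacle is keeping the implicit constants uniform in $k$ throughout the induction, since naively the number of auxiliary hypersurfaces produced at each step could depend badly on $k$; this is handled by working with absolutely irreducible components and invoking the affine (rather than projective) version of dimension growth at each stage.
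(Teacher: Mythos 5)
The paper does not reprove this theorem: it is imported as a black box from Vermeulen (the case $d\ge 4$) and Browning--Gorodnik (the case $d=2$), so your sketch of the $p$-adic determinant method is essentially a summary of the cited literature rather than an argument the paper itself supplies. At that level your outline is broadly consistent with how those works proceed, including the role of the ``not a polynomial of two linear forms'' hypothesis (it excludes hypersurfaces that are cylinders over plane curves, which can carry on the order of $B^{k-2+1/d}$ integer points) and the reason $d=3$ is excluded.

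The genuine gap is the field of coefficients. The statement here is for $f\in\CC[x_1,\ldots,x_k]$, whereas Vermeulen and Browning--Gorodnik prove their theorems only for polynomials with rational coefficients --- and your proposed argument, which reduces modulo a prime $p$ and counts $\FF_p$-points of $S$, only makes sense in that arithmetic setting. You never explain how to pass from $\QQ$ to $\CC$, and this is precisely the one ingredient the paper adds: if $f$ is not proportional to a polynomial with coefficients in $\QQ$, then \cref{galois-trick} (applied with $\FF=\QQ$, via an automorphism of $\CC$ fixing $\QQ$ but moving some coefficient of $f$) places all integer zeros of $f$ inside a variety of codimension at least $2$ and degree at most $d^2$, after which the Schwartz--Zippel bound (\cref{trivial-bound}) already gives $N_S(B)\le O_{d,k}(B^{k-2})$; only when $f$ is proportional to a rational polynomial are the cited theorems invoked (and there irreducibility over $\CC$ gives the absolute irreducibility those theorems need). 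Without this reduction, or some equivalent handling of genuinely complex coefficients, your proposal does not prove the statement as formulated.
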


    Note that if the assumption of \cref{affine-dimension-growth} holds for a polynomial $f$, then it also holds for $f \circ \varphi$ for any bijective affine-linear map $\varphi:\CC^k \to \CC^k$. Therefore, we can rewrite the resulting bound in terms of the density function as $d_S(B) \le O_{d, k, \eps}(B^{-2+\eps})$.

    Vermeulen \cite{vermeulen-24} and Browning--Gorodnik \cite{BG17} state their results only for polynomials with rational coefficients. The reason is that this is the hardest case, which is also most natural to consider from number-theoretic point of view. Since we would like to apply \cref{affine-dimension-growth} to an arbitrary polynomial $f$, we provide a short argument which handles the case when $f$ is not proportional to a polynomial with coefficients in $\QQ$. Namely, \cref{galois-trick} below (applied with $\FF = \QQ$) implies that in this case the set of integer zeros of $f$ lies in a variety of codimension at least $2$. Then the desired bound on its size follows directly from the Schwartz--Zippel lemma (\cref{trivial-bound}).

    \begin{proposition} \label{galois-trick}       
       Consider a field $\mb F\subseteq \mb C$, and consider an irreducible polynomial $f \in \CC[x_1, \ldots, x_k]$ of degree $d$, which is not proportional to a polynomial with coefficients in $\FF$. Let $S \subseteq \CC^k$ be the zero set of $f$. Then there exists another variety $T \subseteq S$ of dimension at most $k-2$ and degree at most $d^2$ such that $T \cap \FF^k = S \cap \FF^k$.
    \end{proposition}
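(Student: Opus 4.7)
The plan is to produce an auxiliary polynomial $g \in \FF[x_1,\ldots,x_k]$ that vanishes on every $\FF$-rational zero of $f$ but is not a $\CC$-multiple of $f$; then the variety $T := S \cap \{g=0\}$ will do the job, with the codimension bound coming from the irreducibility of $f$ and the degree bound coming from \cref{Bezout}.

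To construct $g$, let $V \subseteq \CC$ be the $\FF$-vector subspace spanned by the (finitely many) coefficients of $f$. The hypothesis that $f$ is not $\CC$-proportional to any $\FF$-polynomial is exactly the statement that $\dim_\FF V \ge 2$. Fix an $\FF$-basis $\mu_1,\ldots,\mu_r$ of $V$ (with $r \ge 2$), and expand
\[
f = \sum_{i=1}^r \mu_i p_i, \qquad p_i \in \FF[x_1,\ldots,x_k].
\]
A short check confirms that each $p_i$ is a nonzero polynomial of degree at most $d$ (if some $p_i$ vanished, the corresponding $\mu_i$ would be redundant as a basis element of $V$). For any $x \in \FF^k$, each $p_i(x)$ lies in $\FF$, and the $\FF$-linear independence of the $\mu_i$'s then forces $f(x) = 0$ if and only if $p_1(x) = \cdots = p_r(x) = 0$. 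In particular, setting $g := p_1$, we get $S \cap \FF^k \subseteq \{g=0\}$, which together with $T \subseteq S$ gives $T \cap \FF^k = S \cap \FF^k$.

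For the dimension and degree of $T$: if $g = \lambda f$ for some $\lambda \in \CC^{\times}$, then $f = \lambda^{-1}g$ would have coefficients in $\lambda^{-1}\FF$, contradicting the hypothesis on $f$. Hence $g$ is not a $\CC$-multiple of $f$; since $f$ is irreducible and $\deg g \le d$, we conclude $f \nmid g$, so the irreducible variety $S = \{f=0\}$ (of dimension $k-1$ by \cref{one-polynomial}) is not contained in $\{g=0\}$. Therefore $T \subsetneq S$ forces $\dim T \le k-2$, and B\'ezout (\cref{Bezout}) gives $\deg T \le \deg S \cdot \deg\{g=0\} \le d \cdot d = d^2$.

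The only point requiring real care is the linear-algebra bookkeeping in the second paragraph (non-vanishing of the $p_i$'s and non-proportionality of $g$ with $f$), but both boil down to $\FF$-linear independence of the $\mu_i$'s, so no genuine obstacle arises.
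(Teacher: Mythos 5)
Your proof is correct, but it takes a genuinely different route to the auxiliary polynomial than the paper does. The paper rescales $f$ so that one coefficient equals $1$, picks a coefficient $z\in\CC\setminus\FF$, and invokes the existence of an automorphism $\sigma$ of $\CC$ fixing $\FF$ but moving $z$ (extending an automorphism of $\FF(z)$ to all of $\CC$); the conjugate polynomial $f^{\sigma}$ then plays the role of your $g$, and the conclusion follows, as in your argument, from irreducibility of $f$ (forcing a proper intersection, hence codimension at least $2$) together with \cref{Bezout}. Your construction instead expands the coefficients of $f$ in an $\FF$-basis $\mu_1,\dots,\mu_r$ of their $\FF$-span, writing $f=\sum_i\mu_i p_i$ with $p_i\in\FF[x_1,\dots,x_k]$, and takes $g=p_1$; the hypothesis that $f$ is not proportional to an $\FF$-polynomial is correctly identified with $r\ge 2$, and the $\FF$-linear independence of the $\mu_i$ gives both that $S\cap\FF^k\subseteq\{g=0\}$ and that $g$ is not a scalar multiple of $f$. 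What your approach buys is elementarity: it avoids the extension-of-automorphisms fact (which needs a transcendence-basis/choice argument) and the algebraic-versus-transcendental case split, using only linear algebra over $\FF$. One detail you gloss over: $g=p_1$ could be a nonzero constant, in which case $\{g=0\}=\emptyset$ and $T=\emptyset$; under the paper's conventions (the empty variety has dimension $-\infty$ and degree $0$, and then $S\cap\FF^k=\emptyset$ as well) the conclusion still holds, so this is a harmless degenerate case rather than a gap. Also note that your degree bound tacitly uses $\deg\{g=0\}\le\deg g\le d$, which follows from \cref{one-polynomial} applied to the irreducible factors of $g$ together with \cref{unions}.
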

    \begin{proof}
        Rescale $f$ so that one of its coefficients is equal to $1$. Then it has a coefficient $z \in \CC\setminus\FF$.

        Recall the following simple fact: for any $z \in \CC \setminus \FF$ there exists an automorphism $\sigma$ of $\CC$ which acts as the identity on $\FF$ but does not fix $z$.
        To prove this fact, one can first define this automorphism on $\FF(z)$ by sending $z$ to a different root of the minimal polynomial of $z$ over $\FF$ if it is algebraic over $\FF$, and to (say) $z+1$ if $z$ is transcendental over $\FF$. Then one can extend this automorphism to the whole of $\CC$ (see for example \cite{yale-66}).

        Applying this automorphism $\sigma$ to each coefficient of $f$, we obtain a polynomial $f^\sigma$, which is not proportional to $f$ but still satisfies $f(q) = f^\sigma(q)$ for any $q \in \FF^k$. Therefore, the variety $T$ defined as
        \[
        T = \{x \in \CC^k : f(x) = f^\sigma(x) = 0\}
        \] 
        indeed satisfies $T \cap \FF^k = S \cap \FF^k$. By \cref{one-polynomial}, $T$ is an intersection of two distinct irreducible varieties of dimension $k-1$ and degree $d$. Then it has dimension at most $k-2$ and, by B\'ezout's theorem (\cref{Bezout}), degree at most $d^2$.
    \end{proof}

    \section{Decomposition into subspaces}
    \label{sec:decoupling}

    In this section we prove \cref{structure-randomness}, stated below. As outlined in \cref{sec:outline}, this provides a decomposition of $\mb C^k$ into a ``structured'' and subspace $W$ and a ``disordered'' subspace $U$.

    \begin{theorem} \label{structure-randomness}
        Fix $\delta, C_1 > 0$. Let $S \subseteq \CC^k$ be a variety of degree at most $d$. Consider a sequence $A$ of vectors in $\CC^k$ with basis packing number at least $\delta n$. 

        Then there exists a decomposition of $\CC^k$ as $U \oplus W$ for some linear subspaces $U$ and $W$, a variety $S' \subseteq W$ of degree at most $d^k$, and a subsequence $A'$ of $A$ satisfying all the following conditions:
        \begin{enumerate}
            \item[(1)] The basis packing number of $A'$ is at least $(\delta/(2(2k)^k))\cdot  n$;
            \item[(2)] Let $\pi_W: \CC^k \to W$ be the projection map. Then $\rho(\pi_W(A')) \ge n^{-C}$ for some constant $C$ not depending on $n$ (but possibly depending on $\delta, C_1, d, k)$;
            \item[(3)] $\pi_W^{-1}(S') \subseteq S$, and $\rho(A', S \setminus \pi_W^{-1}(S')) \le n^{-C_1}$.
        \end{enumerate}
    \end{theorem}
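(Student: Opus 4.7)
The plan is to construct the decomposition via an iterative procedure, starting from the trivial decomposition $W = \{0\}$, $U = \CC^k$, $A' = A$. Throughout the iteration, for the current choice of $W$ and $U$ I define
\[
S' = \{w \in W : w + U \subseteq S\} = \bigcap_{u \in U} \bigl((S - u) \cap W\bigr),
\]
which is a subvariety of $W$ of degree at most $d^k$ by \cref{infinite-intersection}; in particular the first part of condition (3) holds automatically. At each step I check the stopping rule $\rho(A', S \setminus \pi_W^{-1}(S')) \le n^{-C_1}$: if it holds, I output the current data; otherwise I enlarge $W$ by at least one dimension as described below.

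To enlarge $W$, I use the following \emph{extraction step}. The failure of the stopping rule means that $X := \xi_1 a'_1 + \ldots + \xi_m a'_m$ lies in $S \setminus \pi_W^{-1}(S')$ with probability at least $n^{-C_1}$. For any such outcome $x$, we have $\pi_W(x) + U \not\subseteq S$ by the very definition of $S'$, so the slice $(\pi_W(x) + U) \cap S$ is a \emph{proper} subvariety of the affine space $\pi_W(x) + U$ (of degree at most $d$, by \cref{Bezout}). Thus, conditionally on its $W$-projection, $X$ is constrained to a positive-codimension affine subvariety of a fiber of $\pi_W$ with polynomial probability. A decoupling argument (the forthcoming \cref{iterated-decoupling}, applied in the $U$-direction) reduces this to a linear-anticoncentration statement about the $U$-components of the $a'_i$ along some new direction, and the optimal inverse Littlewood--Offord theorem (\cref{inverse}) then produces a linear form $\ell$ on $\CC^k$, not vanishing on $U$, for which $\ell(X)$ has point concentration at least $n^{-C'}$ for some $C' = O_{d,k,C_1}(1)$. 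I then enlarge $W$ by adjoining a vector $v \in U$ with $\ell(v) \neq 0$ (correspondingly shrinking $U$) and discard from $A'$ the exceptional vectors lying outside the proper symmetric GAP provided by \cref{inverse}.

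Since each iteration strictly increases $\dim W$, the procedure terminates after at most $k$ steps. Conditions (1) and (2) are then verified by direct bookkeeping: each iteration introduces only a polynomially small factor into $\rho(\pi_W(A'))$, and loses at most a factor of $1/(2k)$ in the basis packing number (coming from discarding the exceptional vectors of \cref{inverse} together with a \cref{dropping-to-subspace}-type restriction used to preserve structure along the new direction added to $W$). After at most $k$ iterations these losses compound to $n^{-O(1)}$ and $(1/(2k))^k$ respectively, matching the bounds in (1) and (2), with the final constant $C$ in (2) depending only on $\delta, C_1, d, k$.

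\textbf{Main obstacle.} The crux is the extraction step: turning the geometric fact that $X$ lies in a proper subvariety of $\pi_W(X) + U$ with polynomial probability (conditionally on $\pi_W(X)$) into an honest linear-anticoncentration statement that is suitable for \cref{inverse}, via decoupling. The key difficulty is that the relevant subvariety \emph{depends} on the value of $\pi_W(X)$, so one needs a decoupling step that works uniformly across the fibers of $\pi_W$, or at least a decoupling inside each fiber that can be recombined coherently. A secondary (and more mechanical) hurdle is to track the basis packing number and the polynomial exponent in $\rho(\pi_W(A'))$ carefully across the up-to-$k$ iterations, so that the final quantitative bounds depend only on $\delta, C_1, d, k$.
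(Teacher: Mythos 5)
Your overall architecture coincides with the paper's: iterate, define $S' = \bigcap_{u \in U}\left((S-u)\cap W\right)$ (degree at most $d^k$ via \cref{infinite-intersection}), stop once $\rho(A', S\setminus\pi_W^{-1}(S'))\le n^{-C_1}$, otherwise shrink $U$, and terminate within $k$ steps. But the extraction step --- which you yourself flag as the crux --- is precisely the nontrivial content of the paper's argument (its \cref{one-step}), and your proposal leaves it unresolved. The paper closes the fiber-dependence gap as follows. It maintains as an invariant that $\rho(A_i, U_i)\ge n^{-C'_i}$, i.e.\ that $\rho(\pi_{W_i}(A_i))$ is polynomially large. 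This allows it to apply the inverse theorem (\cref{inverse}) to the \emph{projected} sequence $\pi_W(A)$, producing a proper symmetric GAP $Q$ of size $n^{O(1)}$ containing all but $\delta n/2$ of the projections; after restricting to the subsequence $A'$ whose projections lie in $Q$, the random projection $\pi_W(X)$ has support $H$ of polynomial size. One then takes a union bound over the polynomially many fibers $w\in H\setminus S'$ and applies the decoupling lemma (\cref{iterated-decoupling}) separately inside each \emph{fixed} fiber, after partitioning $A'$ into $k$ subsequences of linear basis packing number: since for $w\notin S'$ the slice $S_w=S\cap(U+w)$ is a proper subvariety of $U+w$, every linear subspace contained in a translate of $S_w$ is a proper subspace of $U$, so either some $\rho(A'',U')$ with $U'\subsetneq U$ is polynomially large (and one recurses with $U_{i+1}=U'$), or every fiber probability is at most $kd\,n^{-C''/2^{k-1}}$, which after summing over the $n^{O(1)}$ fibers contradicts the failure of the stopping rule. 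None of this mechanism appears in your sketch, and without it the step ``decoupling inside each fiber that can be recombined coherently'' is exactly what remains to be proved.

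Two further points where your sketch, as written, would not go through. First, you assign the inverse theorem the wrong role: in the paper it is not used after the decoupling to ``produce a linear form''; it is used \emph{before}, on $\pi_W(A)$, and this is what makes the union bound over fibers possible (it is also why the maintained invariant must be the subspace concentration $\rho(A_i,U_i)\ge n^{-C'_i}$, so that \cref{inverse} is applicable to the projections at every round). Second, tracking a single linear form $\ell$ with $\ell(X)$ concentrated, and then claiming that $\rho(\pi_W(A'))$ only loses a polynomial factor when you adjoin $v$ to $W$, hides a joint-concentration issue: concentration of $\pi_W(X)$ and of $\ell(X)$ separately does not imply concentration of the projection onto $W\oplus\CC v$. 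The paper avoids this entirely because the decoupling outputs concentration on a subspace directly --- $\rho(A'',V)\ge n^{-C''}$ with $V\subsetneq U$ the translation-stabilizer of an irreducible component of the fiber slice --- and setting $U_{i+1}=V$ makes condition (2) at the next round (and at termination, via \cref{rho(A S)-1,rho(A S)-3}) an immediate consequence, with no need to combine information across iterations.
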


    Intuitively, 
    condition (2) says that the projection of (a subsequence of) $A$ onto $W$ has polynomially large point probabilities (which allows us to apply an inverse Littlewood--Offord theorem such as \cref{inverse}). The set $\pi_W^{-1}(S') \subseteq S$ can be viewed as ``the part of $S$ which we can control via its projection onto $W$''. Condition (3) then gives us control over the complementary part of $S$ (which cannot be studied via its projection onto $W$).

    Our strategy to prove \cref{structure-randomness} is to consider the following procedure. We begin with $A' = A$ and $U = \mathbb{C}^k$, where conditions (1) and (2) hold automatically. Then we show that the only way for condition (3) to fail is if $A'$ contains a ``linear-size'' subsequence $A_1$ for which $\rho(A_1, U_1)$ remains polynomial in $n$ for some proper subspace $U_1 \subsetneq U$. In that case, we set $A' = A_1$ and $U = U_1$ (while maintaining conditions (1) and (2)), and repeat the process. As the dimension of $U$ decreases on each step, the procedure terminates after at most $k$ steps.

    Later in this section, we will state and prove \cref{one-step}, which describes a single step of the above procedure. Its proof relies on \cref{iterated-decoupling}, stated below, which allows one to bound the ``variety probability'' $\rho(A, S)$ in terms of certain ``subspace probabilities'' $\rho(A', V)$ (for subsequences $A'$ of $A$ and subspaces $V$ contained in a translate of $S$). 
    
    \begin{lemma} \label{iterated-decoupling}
        Let $S \subseteq \CC^k$ be a variety of dimension at most $\ell$ and degree at most $d$. Consider a sequence $A$ of vectors in $\CC^k$, partitioned into $\ell+1$ subsequences $A_0, \ldots, A_\ell$. Then
        \[
        \rho(A, S) \le (\ell+1) \cdot d \cdot \left(\sup_{i, V} \rho(A_i, V)\right)^{1/2^{\ell}},
        \]
        where the supremum is taken over $0 \le i \le \ell$ and over linear subspaces $V \subseteq \CC^k$, such that $V \subseteq S - y$ for some $y \in \CC^k$.
    \end{lemma}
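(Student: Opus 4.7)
The plan is to prove the lemma by induction on $\ell$. For the base case $\ell = 0$, $S$ is a finite set of at most $d$ points, so \cref{rho(A S)-2} together with translation-invariance of $\rho(A, \cdot)$ gives $\rho(A, S) \le d \cdot \rho(A_0, \{0\})$; since $\{0\}$ is a linear subspace contained in $S - s$ for any $s \in S$, this matches the claim. For the inductive step, \cref{rho(A S)-2} and the additivity of degree over irreducible components let me assume $S$ is irreducible of dimension exactly $\ell$. If $S$ is itself an $\ell$-dimensional affine subspace, then $V := S - y_0$ trivially provides a witness. Otherwise, consider the translation stabilizer $V_0 := \{v \in \CC^k : S + v = S\}$: if $V_0 \ne \{0\}$ then $V_0$ itself is a witness subspace, and I apply the inductive hypothesis to $\pi(S) \subseteq \CC^k/V_0$ (of strictly smaller dimension, and the same degree by \cref{projections}) after combining the last $\dim V_0 + 1$ of the subsequences, pulling back via \cref{rho(A S)-3}.

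The main case is $\mathrm{Stab}(S) = \{0\}$, in which I apply a Cauchy--Schwarz decoupling on $A_\ell$. Taking an independent Rademacher copy $(\xi'_j)_{j \in A_\ell}$ and reparametrizing by the disagreement indicator $\theta_j := \mathbf{1}[\xi_j \ne \xi'_j]$ together with fresh Rademachers $\eta_j$ (on disagreeing coordinates) and $\tau_j$ (on agreeing coordinates), the inequality $(\mathbb{E}f)^2 \le \mathbb{E}f^2$ yields
\[
\rho(A, S)^2 \le \sup_y \expected{\prob{W + U \in S \text{ and } W - U \in S \mid \theta}},
\]
where $U := \sum_{\theta_j = 1} \eta_j a_j$ and $W := \sum_{\theta_j = 0} \tau_j a_j + \sum_{i < \ell} X^{(i)} + y$ are conditionally independent given $\theta$. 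The event is equivalent to $W \in T_U := (S - U) \cap (S + U)$, which has degree at most $d^2$ by \cref{Bezout}. Crucially, because $\mathrm{Stab}(S) = \{0\}$, for every $U \ne 0$ the variety $T_U$ has dimension at most $\ell - 1$: otherwise, irreducibility of $S$ would force $T_U = S - U$ and hence $S = S + 2U$, placing $2U \ne 0$ in the stabilizer.

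I then split on $U = 0$ versus $U \ne 0$. For $U \ne 0$, I apply the inductive hypothesis to $T_U$ using the $\ell$-subsequence partition $A_0, A_1, \ldots, A_{\ell-2}, A_{\ell-1} \cup (A_\ell)_{\theta_j = 0}$ of the sequence $A^W$ contributing to $W$; since $T_U \subseteq S - U$, \cref{rho(A S)-1} ensures the resulting supremum is bounded by $\sigma := \sup_{i, V} \rho(A_i, V)$, giving $\sup_y \prob{W \in T_U \mid \theta, U} \le \ell d^2 \cdot \sigma^{1/2^{\ell-1}}$ uniformly. For $U = 0$, the $\ell^2$-norm identity $\prob{U = 0} = \sum_z \prob{X^{(\ell)} = z}^2 \le \rho(A_\ell)$, combined with $\{0\} \subseteq S - s$, gives the $U = 0$ contribution $\le \prob{U = 0} \le \sigma$. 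Adding and using $\sigma \le \sigma^{1/2^{\ell-1}}$ yields $\rho(A, S)^2 \le (\ell + 1) d^2 \sigma^{1/2^{\ell-1}}$, and taking square roots gives the claim (using $\sqrt{\ell+1} \le \ell+1$). The main obstacle is this closing step: the $U = 0$ contribution would create a circular dependence on $\rho(A, S)$ itself if not for the $\ell^2$-norm bound on $\prob{U = 0}$, which is precisely what lets the induction close.
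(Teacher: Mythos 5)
Your argument is correct, and its skeleton is the paper's: reduce to irreducible $S$, induct on the dimension, apply a Cauchy--Schwarz decoupling with an independent copy of one block, use B\'ezout (\cref{Bezout}) to keep the degree of the intersection under control, and treat separately the degenerate event where the dimension fails to drop. The genuine difference is in how that degenerate event is handled. The paper keeps a possibly nontrivial translation stabilizer $V_S=\{v:S=S-v\}$ inside the decoupling step: it shows $V_S$ is a \emph{linear subspace} contained in a translate of $S$, so the event $\dim T=\ell$ (equivalently $X-X'\in V_S$) is bounded directly by $\rho(A_0,V_S)$, which is itself an admissible term in the supremum. You instead quotient out a nontrivial stabilizer first (applying the inductive hypothesis to $\pi(S)\subseteq\CC^k/V_0$ after merging blocks, and pulling witnesses back via \cref{rho(A S)-3,rho(A S)-1}), and then, with trivial stabilizer, your agreement/disagreement reparametrization makes the degenerate event exactly $U=0$, bounded by the collision probability $\sum_z\prob{X^{(\ell)}=z}^2\le\rho(A_\ell,\{0\})$ — which is precisely the paper's bound specialised to $V_S=\{0\}$. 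Both routes close the induction with the right constants; the paper's is a bit leaner because the stabilizer never has to be removed, while your route needs two standard facts you leave implicit: that $\mathrm{Stab}(S)$ is a linear subspace (the paper proves this via the line-intersection argument, and you need it to form the quotient), and that $\pi(S)$ is Zariski closed of the same degree so that \cref{projections} applies to $S=\pi^{-1}(\pi(S))$ (immediate from $\pi(S)=s^{-1}(S)$ for a linear section $s$ of $\pi$). Neither omission is serious, but both should be stated if you write this up.
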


    The proof of \cref{iterated-decoupling} is based on an ``iterative decoupling argument'', inspired by the approach of \cite{kwan-sauermann-23}. For context, \emph{decoupling} is a general term for a large body of techniques in probability and statistics for ``reducing from dependent situations to independent ones'' (see for example the monograph~\cite{dlPG12}). In Littlewood--Offord theory, 
    ``decoupling'' usually refers to a class of techniques to reduce polynomial anticoncentration to linear anticoncentration (popularised by Costello, Tao and Vu~\cite{costello-tao-vu-06}), via inequalities such as \cref{decoupling-lemma} below. The particular statement of \cref{decoupling-lemma} appears (for example) as \cite[Lemma 8.4]{costello-vu-08}, but for the convenience of the reader we provide the short proof.

    \begin{lemma} \label{decoupling-lemma}
        If an event $\mc E(X,Y)$ depends on independent random objects $X,Y$, and $X'$ is an independent copy of $X$, then
        \[
        \prob{\mc{E}(X, Y)} \le \Big(\prob{\mc{E}(X, Y) \text{ and } \mc{E}(X', Y)}\Big)^{1/2}.
        \]
    \end{lemma}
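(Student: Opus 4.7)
The plan is a short conditioning-and-Cauchy--Schwarz argument. First I would condition on the ``outer'' variable $Y$, and introduce the shorthand $p(Y) = \prob{\mc{E}(X,Y) \mid Y}$ for the conditional probability of the event given $Y$. Since $X$ and $X'$ are i.i.d.\ and independent of $Y$, once we fix $Y = y$ the two events $\mc{E}(X,y)$ and $\mc{E}(X',y)$ become independent, each with probability $p(y)$. Therefore
\[
\prob{\mc{E}(X,Y) \text{ and } \mc{E}(X',Y) \mid Y} = p(Y)^2.
\]

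Taking expectations over $Y$ yields
\[
\prob{\mc{E}(X,Y)} = \expected{p(Y)}, \qquad \prob{\mc{E}(X,Y) \text{ and } \mc{E}(X',Y)} = \expected{p(Y)^2}.
\]
Now I would apply the Cauchy--Schwarz inequality (equivalently Jensen's inequality applied to the convex function $t \mapsto t^2$) to obtain $\expected{p(Y)}^2 \le \expected{p(Y)^2}$. Rearranging and taking square roots gives exactly the claimed bound.

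There is no real obstacle: the statement is essentially a one-line consequence of Cauchy--Schwarz after conditioning, and the only point worth mentioning carefully is the conditional independence of $\mc{E}(X,Y)$ and $\mc{E}(X',Y)$ given $Y$, which follows from the joint independence assumption on $(X, X', Y)$.
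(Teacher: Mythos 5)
Your proposal is correct and follows essentially the same argument as the paper: condition on $Y$, use the conditional independence of $\mc E(X,Y)$ and $\mc E(X',Y)$ given $Y$ to write the joint probability as $\EE_Y\bigl[\prob{\mc E(X,Y)\mid Y}^2\bigr]$, and then apply Cauchy--Schwarz (equivalently Jensen) to compare with $\EE_Y\bigl[\prob{\mc E(X,Y)\mid Y}\bigr]^2$. No gaps.
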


    \begin{proof}
        By the Cauchy--Schwarz inequality, we have
        \begin{align*}
        \prob{\mc E(X,Y)\text{ and }\mc E(X',Y)} &= \EE_Y\!\Bigl[\prob{\mc E(X,Y)\text{ and }\mc E(X',Y)\,|\, Y}\Bigr]=\EE_Y\!\Bigl[\prob{\mc E(X,Y)\,|\, Y}^2\Bigr]\\
        &\ge \EE_Y\!\Bigl[\prob{\mc E(X,Y)\,|\, Y}\Bigr]^2=\prob{\mc E(X,Y)}^2.
        \end{align*}
        Taking square roots on both sides completes the proof.
    \end{proof}

    \begin{proof}[\textbf{Proof of \cref{iterated-decoupling}}]
        Let $S_1, \ldots, S_m$ be the irreducible components of $S$. Recall that (by definition) $\deg S = \sum_{j = 1}^m \deg S_j$, and that (by \cref{rho(A S)-2}) $\rho(A, S) \le \sum_{j = 1}^m \rho(A, S_j)$. Therefore, by treating each irreducible component separately, we may assume that $S$ is irreducible.
        
        We argue by induction on $\ell$. In the base case $\ell = 0$ we consider only one subsequence $A_0 = A$, the variety $S$ consists of a single point, and the only linear subspace appearing in the supremum has dimension zero. So, in this case both sides of the inequality are equal to $\rho(A)$.

        Let $n$ be the size of $A$, and let $I_0 \subseteq [n]$ be the set of indices corresponding to the subsequence $A_0$. Consider independent random variables 
        \[
        X = \sum_{i \in I_0} \xi_i a_i, \quad Y = \sum_{i \in [n] \setminus I_0} \xi_i a_i,
        \]
        and let $X'$ be an independent copy of $X$.

        Fix any $x \in \CC^k$. Then by the decoupling lemma (\cref{decoupling-lemma}),
        \begin{align*}
        \prob{X + Y \in S - x} &\le \prob{X + Y \in S - x \text{ and } X' + Y \in S - x}^{1/2} \\
        &= \prob{Y \in (S - x - X) \cap (S - x - X')}^{1/2}
        \end{align*}
        Define a (random) variety $T = (S - x - X) \cap (S - x - X')$.

        First we deal with the case when $\dim T \le \ell - 1$. By B\'ezout's theorem (\cref{Bezout}), we have $\deg T \le d^2$. Applying the induction hypothesis to the variety $T$ and the subsequences $A_1, \ldots, A_{\ell}$, we obtain that
        \begin{align} \label{eq:dim T < ell}
        \probs{Y}{Y \in T \mid \dim T \le \ell - 1} 
        &\le \ell d^2\left(\sup_{i', V'} \rho(A_{i'}, V')\right)^{1/2^{\ell-1}}.
        \end{align}
        where the supremum is taken over $1 \le i' \le \ell$ and over linear subspaces $V'$ contained in a translate of $T$. Since $T$ itself is contained in a translate of $S$, this supremum is bounded above by the supremum appearing in the statement of the lemma:
        \[
        \sup_{1 \le i' \le \ell, V' \subseteq T - y'} \rho(A_{i'}, V') \le \sup_{0 \le i \le \ell, V \subseteq S - y} \rho(A_i, V).
        \]
        
        Next, we consider the case when $\dim T = \dim S = \ell$. As $S$ is irreducible, this can happen only if $S - x - X = S - x - X'$. Define 
        \[
        V_S = \{v \in \CC^k : S = S - v\}.
        \]
        Equivalently, $V_S$ consists of all vectors $v \in \CC^k$ such that for any $x \in S$ the point $x + v$ also lies in $S$. We claim that $V_S$ is a linear subspace.
        
        It is clear that if $v_1, v_2$ lie in $V_S$ then $v_1 + v_2$ also lies in $V_S$. Thus, for any $y \in S$ and $v \in V_S$ the point $y + tv$ lies in $S$ for any $t \in \NN$. So, the variety $S$ has infinitely many intersection points with the line $y + tv$. Then it contains the whole line, and $y + tv \in S$ for any $t \in \CC$. Hence, if $v \in V_S$ then $tv \in V_S$ for any $t \in \CC$.

        Therefore, $V_S$ is a linear subspace contained in $S-y$ for any $y \in S$. From its definition, we have 
        \begin{equation} \label{eq:dim T = ell}
        \probs{X, X'}{\dim T = \ell} = \probs{X, X'}{(X + x) - (X' + x) \in V_S} = \EE_{X'}\Big[\probs{X}{X \in V_S + X' \mid X'}\Big]\le \rho(A_0, V_S).
        \end{equation}

        Combining \cref{eq:dim T < ell} and \cref{eq:dim T = ell}, we conclude that
        \begin{align*}
        \prob{Y \in T} &\le \probs{X, X'}{\dim T = \ell} + \EE_{X, X'}\Big[\probs{Y}{Y \in T \mid \dim T \le \ell - 1}\Big] \\
        &\le \rho(A_0, V_S) + \ell d^2 \left(\sup_{1 \le i' \le \ell, V' \subseteq T - y'} \rho(A_{i'}, V')\right)^{1/2^{\ell-1}} \\
        &\le (\ell + 1) d^2 \left(\sup_{0 \le i \le \ell, V \subseteq S - y} \rho(A_i, V)\right)^{1/2^{\ell - 1}}.
        \end{align*}
        So, for any $x \in \CC^k$ we have
        \[
        \prob{X + Y \in S - x} \le \prob{Y \in T}^{1/2} \le (\ell + 1) d \left(\sup_{i, V} \rho(A_i, V)\right)^{1/2^{\ell}}.
        \]
        As $\rho(A, S) = \sup_{x \in \CC^k} \prob{X + Y \in S - x}$, this completes the proof.
    \end{proof}

    Now we state and prove \cref{one-step}, which constitutes one step of the iterative procedure in the proof of \cref{structure-randomness}.

    \begin{proposition} \label{one-step}
        Fix $\delta, C, C_1 > 0$. Let $S \subseteq \CC^k$ be a variety of degree at most $d$. Consider a sequence $A$ of \textbf{at most} $n$ vectors in $\CC^k$ with basis packing number at least $\delta n$. Let $U$ be a subspace of $\CC^k$ satisfying $\rho(A, U) \ge n^{-C}$. Fix a decomposition of $\CC^k$ as $U \oplus W$ for some linear subspace $W$, and let $\pi_W:\CC^k \to W$ be the projection map.        
        Then at least one of the following holds:
        \begin{enumerate}
            \item[(a)] There exists a subsequence $A'$ of $A$ with basis packing number at least $(\delta/2) n$, and a variety $S' \subseteq W$ of degree at most $d^k$, such that $\pi_W^{-1}(S') \subseteq S$ and 
            \[
            \rho(A', S \setminus \pi_W^{-1}(S')) \le n^{-C_1}.
            \]
            \item[(b)] There exists a subsequence $A''$ of $A$ with basis packing number at least $(\delta/(2k)) n$, and a linear subspace $U' \subsetneq U$, such that
            \[
            \rho(A'', U') \ge n^{-C''}
            \]
            for some constant $C''$ not depending on $n$ (but possibly depending on $\delta, C, C_1, d, k$).
        \end{enumerate}
    \end{proposition}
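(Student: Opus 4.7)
The plan is to set $S':=\{w\in W : w+U\subseteq S\}=\bigcap_{u\in U}\bigl((S-u)\cap W\bigr)$: each $(S-u)\cap W$ is a variety in $W$ of degree at most $d$ by B\'ezout (\cref{Bezout}), so the infinite-intersection bound (\cref{infinite-intersection}), applied inside $W$ (identified with $\CC^{\dim W}$), yields that $S'$ is a variety of degree at most $d^{\dim W}\le d^k$, and $\pi_W^{-1}(S')\subseteq S$ by construction. Let $T:=S\setminus\pi_W^{-1}(S')$ and let $R$ be its Zariski closure; concretely $R$ is the union of the irreducible components $S_j$ of $S$ with $S_j\not\subseteq\pi_W^{-1}(S')$, so $R\subseteq S$ has degree at most $d$ and $T\subseteq R$.

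Next I would partition $A=A_0\sqcup\cdots\sqcup A_\ell$ with $\ell:=\dim R\le k$, distributing the $\delta n$ disjoint bases evenly so that each $A_i$ has basis packing number at least $\delta n/(k+1)\ge\delta n/(2k)$. Applying \cref{iterated-decoupling} to $R$ with this partition gives
\[
\rho(A,T)\le\rho(A,R)\le(\ell+1)\,d\cdot M^{1/2^{\ell}},\qquad M:=\sup_{i,V}\rho(A_i,V),
\]
where $V$ ranges over linear subspaces of $\CC^k$ with $V\subseteq R-y$ for some $y\in\CC^k$. Set $C':=2^kC_1+1$. If $M\le n^{-C'}$ then for large $n$ we have $\rho(A,T)\le n^{-C_1}$ and case~(a) holds with $A':=A$ (basis packing $\delta n\ge\delta n/2$ is fine). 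Otherwise pick $i$ and $V$ with $\rho(A_i,V)>n^{-C'}$, and set $A'':=A_i$.

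The final step is to extract from such a $V$ a proper subspace $U'\subsetneq U$ with $\rho(A'',U')\ge n^{-C''}$ for some $C''=O_{\delta,C,C_1,d,k}(1)$. The key structural input is that, by construction of $R$, no irreducible component $S_j$ of $R$ is $U$-invariant: the set $\{w\in W : w+U\subseteq S_j\}$ is a proper subvariety of (the closure of) $\pi_W(S_j)$. Combining this non-$U$-invariance with a careful inspection of the proof of \cref{iterated-decoupling} — which shows that the subspaces $V$ appearing in $M$ arise as translation stabilizers of iterated irreducible subvarieties of $R$ — together with the hypothesis $\rho(A'',U)\ge\rho(A,U)\ge n^{-C}$ (via \cref{rho(A S)-1}), allows one to produce the required $U'$: either directly via $U':=V$ when $V\subsetneq U$, or via $U':=V\cap U$ after combining the concentration of $\sum\xi_i a_i$ in a translate of $V$ with its concentration in a translate of $U$.

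\textbf{Main obstacle.} The hard part will be the last step: handling the cases when $V\supseteq U$ or $V$ is transverse to $U$, and converting two separate concentration estimates on $V$ and on $U$ into a single concentration estimate on $V\cap U\subsetneq U$. This implication is not automatic from the two bounds in isolation, and requires leveraging the stabilizer structure produced by the iterated decoupling (which pins down the translate $V+x$ along which concentration occurs), the non-$U$-invariance of components of $R$, and possibly a further, finer application of the decoupling machinery inside an affine subspace of the form $U+\pi_W(x)$.
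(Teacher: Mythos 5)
Your construction of $S'$ (as $\bigcap_{u\in U}\bigl((S-u)\cap W\bigr)$, with degree at most $d^k$ via \cref{Bezout} and \cref{infinite-intersection}) is exactly the paper's, but after that the argument diverges, and the divergence is where the gap lies. You apply \cref{iterated-decoupling} to (the closure $R$ of) all of $S\setminus\pi_W^{-1}(S')$, so the subspaces $V$ appearing in your supremum $M$ are merely linear subspaces contained in some translate of $S$ --- for instance translation stabilizers of components of $S$ --- and these can be transverse to $U$ or even contain it (e.g.\ if a component of $S$ is a cylinder in a direction transverse to $U$, the decoupling returns that transverse direction, about which concentration can be as large as $n^{-1/2}$ while saying nothing about proper subspaces of $U$). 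Your proposed repair --- combining $\rho(A_i,V)\ge n^{-C'}$ with $\rho(A_i,U)\ge n^{-C}$ to get polynomial concentration on $U\cap V$ for a subsequence --- is precisely the missing step: two separate concentration bounds on $V$ and on $U$ do not in any obvious way combine to a concentration bound on $U\cap V$ (the two near-events involve different translates and need never occur simultaneously), and nothing in the paper's toolkit supplies such an implication. You acknowledge this yourself under ``Main obstacle'', so as written the proof establishes neither alternative in the case that $V$ is not a proper subspace of $U$. A telltale sign is that your argument never really uses the hypothesis $\rho(A,U)\ge n^{-C}$.

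The paper's proof uses that hypothesis at the outset to avoid ever producing such a $V$. Since $\rho(\pi_W(A))=\rho(A,U)\ge n^{-C}$ (\cref{rho(A S)-3}), the inverse theorem (\cref{inverse}) gives a GAP $Q\subseteq W$ of polynomial size containing all but $\delta n/2$ of the projections; passing to the subsequence $A'$ whose projections lie in $Q$, the projection $\pi_W(X)$ is supported on polynomially many points $H$. One then writes $S\setminus\pi_W^{-1}(S')=\bigsqcup_{w\notin S'}S_w$ with $S_w=S\cap(U+w)$, union-bounds over the at most $|H|$ reachable fibers, and applies \cref{iterated-decoupling} \emph{to each fiber $S_w$ separately}. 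Because $S_w\subseteq U+w$ and $w\notin S'$ forces $S_w\subsetneq U+w$, every linear subspace contained in a translate of $S_w$ is automatically a \emph{proper} subspace of $U$; so if the resulting bound fails, alternative (b) holds directly, with no need to intersect subspaces or compare unrelated concentration events. To salvage your route you would need either a genuinely new lemma converting simultaneous concentration on $U$ and on a transverse $V$ into concentration on $U\cap V$ (which is doubtful as a general statement), or to reintroduce the fiberwise structure --- at which point you are back to the paper's argument.
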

    \begin{proof}
        First, we define a variety $S' \subseteq W$ in the following way:
        \[
        S' = \{w \in W : u + w \in S \text{ for every } u \in U\} 
        \]
        In other words, $S' = \bigcap_{u \in U} (S - u) \cap W$. By B\'ezout's theorem (\cref{Bezout}), the degree of each $(S - u) \cap W$ is at most $d$. Therefore, by \cref{infinite-intersection}, the set $S'$ is indeed a variety of degree at most $d^k$.

        Next we define a subsequence $A'$. Let $\pi_W: \CC^k \to W$ be the projection map. Then by \cref{rho(A S)-3},
        \[
        \rho(\pi_W(A)) = \rho(A, U) \ge n^{-C}.
        \]
        The basis packing number of $A$ is at least $\delta n$, thus, in particular, it contains at least $\delta n$ vectors. Since $\rho(\pi_W(A)) \ge n^{-C} \ge (\delta n)^{-C'}$ for some constant $C' = C'(\delta, C)$, 
        we can apply the optimal inverse theorem (\cref{inverse}) to the sequence $\pi_W(A)$, with $s = \delta n / 2$. As a result, we obtain a proper symmetric GAP $Q \subseteq W$ of rank $r = O_{\delta, C}(1)$, containing all but at most $\delta n / 2$ elements of $\pi_W(A)$, with
        \[
        |Q| \le K_{\delta, C} \cdot \rho(\pi_W(A))^{-1} n^{-r/2} 
        \]
        for some constant $K_{\delta, C}$ depending only on $\delta$ and $C$.

        We define $A'$ to be the subsequence of vectors $a$ in $A$ whose projections $\pi_W(a)$ lie in $Q$. Since $A'$ is obtained by removing at most $\delta n/2$ elements from $A$, its basis packing number is at least $\delta n/2$.

        Let $a'_1, \ldots, a'_m$ be the vectors of $A'$, and let $X = \xi_1 a'_1 + \ldots + \xi_m a'_m$. Suppose that condition (a) does not hold: that is, for some $x \in \CC^k$ 
        \begin{equation} \label{eq:condition-(1)}
        \prob{X + x \in S \setminus \pi_W^{-1}(S')} > n^{-C_1}.
        \end{equation}
        
        The projection of each vector in the sequence $A'$ onto $W$ lies in $Q$, thus for any outcomes of independent Rademacher random variables $\xi_1, \ldots, \xi_m$, the sum $\xi_1 \pi_W(a'_1) + \ldots + \xi_m \pi_W(a'_m)$ lies in the dilated GAP $n Q$. (In fact, with high probability it lies in the smaller dilated GAP $\sqrt{n \log n} Q$ by \cref{Chernoff}, but here this is not important for us.)  In particular, the random variable $\pi_W(X + x)$ is supported on some finite set $H \subseteq W$ satisfying
        \begin{equation}\label{eq:|H|}
        |H| \le n^r|Q| \le n^r \cdot K_{\delta, C} \cdot \rho(\pi_W(A))^{-1} n^{-r/2} \le K_{\delta, C} \cdot n^{C + r/2}.
        \end{equation}

        Choose $C''$ such that $K_{\delta, C} \cdot k \cdot d \cdot n^{-{C''}/{2^{k-1}} + C + {r}/{2} + C_1} < 1$ for any $n \ge 2$. Suppose that condition (b) also does not hold: that is, any subsequence $A''$ of $A$ with basis packing number at least $(\delta/(2k)) n$ and any proper subspace $U' \subsetneq U$ satisfy
        \[
        \rho(A'', U') < n^{-C''}.
        \]
        Our goal is to show that this leads to a contradiction.

        For any $w \in W$ let $S_w$ be the intersection of $S$ with the affine subspace $U + w$. Then 
        \[
        S \setminus \pi_W^{-1}(S') = \bigsqcup_{w \in W, w \notin S'} S_w.
        \]
        Moreover, the probability that $X + x$ lies in $S_w$ is positive only if $w =\pi_W(S_w)$ lies in $H$. Therefore, 
        \begin{equation} \label{eq:X + x in S}
        \prob{X + x \in S \setminus \pi_W^{-1}(S')} = \sum_{w \in H, w \notin S'} \prob{X + x \in S_w} \le \sum_{w \in H, w \notin S'} \rho(A', S_w).
        \end{equation}
        In order to estimate each summand we use \cref{iterated-decoupling}. As the basis packing number of the sequence $A'$ is at least $\delta n / 2$, we can partition it into $k$ subsequences $A_0, \ldots, A_{k-1}$, such that each of them has basis packing number at least $\delta n/(2k)$.
        
        Fix $w \in H \setminus S'$. Then, by definition of $S'$, we have $S_w \subsetneq U + w$. Note that $\dim S_w \le k-1$ and (by B\'ezout's theorem (\cref{Bezout})) $\deg S_w \le \deg S \le d$. Therefore, by \cref{iterated-decoupling},
        \[
        \rho(A', S_w) \le k \cdot d \cdot \left(\sup_{0 \le i \le k-1, \; V \subseteq S_w - y} \rho(A_i, V)\right)^{1/2^{k-1}}.
        \] 
        Crucially, since $S_w \subsetneq U + w$, the supremum in the right hand side is taken only over proper subspaces of $U$. As we assumed that condition (b) does not hold, we conclude that
        \[
        \rho(A', S_w) \le k \cdot d \cdot n^{-C'' / 2^{k-1}}.
        \]
        Combining this with \cref{eq:|H|} and \cref{eq:X + x in S}, we obtain
        \[
        \prob{X + x \in S \setminus \pi_W^{-1}(S')} \le |H| \cdot k \cdot d \cdot n^{-C'' / 2^{k-1}} \le K_{\delta, C} \cdot k \cdot d \cdot n^{-{C''}/{2^{k-1}} + C + {r}/{2}}.
        \]
        This is less than $n^{-C_1}$ by our choice of $C''$, which contradicts our assumption \cref{eq:condition-(1)}.
    \end{proof}

    Now we show how to iterate \cref{one-step} to prove \cref{structure-randomness}.

    \begin{proof}[\textbf{Proof of \cref{structure-randomness}}]
        We will describe an iterative process to construct a descending chain $A_0, A_1, \ldots $ of subsequences of $A$, and a descending chain $U_0 \supsetneq U_1 \supsetneq \ldots $ of linear subspaces of $\CC^k$, maintaining the following two properties:
        \begin{itemize}
            \item The basis packing number of $A_i$ is at least $(\delta/(2k)^i) n$;
            \item $\rho(A_i, U_i) \ge n^{-C'_i}$ for some constant $C'_i$ not depending on $n$.
        \end{itemize}
        
        We start with $A_0 = A$, $U_0 = \CC^k$ and $C'_0 = 1$. Now, suppose we have already constructed $A_i$ and $U_i$. We will attempt to find a decomposition $U\oplus W$ with $U=U_i$ (and some $A',S'$), satisfying the desired properties (1), (2) and (3) (actually, only (3) is nontrivial). If this is not possible, we will show how to construct $A_{i+1}$ and $U_{i+1}$, to continue the process. Since this process can continue for at most $k$ steps (as it is not possible to have a descending chain of more than $k+1$ linear subspaces of $\mb C^k$), this is sufficient to prove \cref{structure-randomness}.

        Let $W_i$ be an arbitrary linear complement of $U_i$, and apply \cref{one-step} to the sequence $A_i$ and the decomposition $\CC^k = U_i \oplus W_i$. Suppose that condition (a) of \cref{one-step} holds. Then there exists a variety $S' \subseteq W_i$ of degree at most $d^k$, and a subsequence $A'$ of $A_i$ with basis packing number at least $(\delta/(2(2k)^i)) n$ such that
        \[
        \rho(A', S \setminus \pi_{W_i}^{-1}(S')) \le n^{-C_1}.
        \]
        In this case we are done by setting $U = U_i$, $W = W_i$, and $C = C'_i$. Indeed, properties (1) and (3) are satisfied by the above. Recalling \cref{rho(A S)-1,rho(A S)-3}, we note that $\rho(A', U_i) \ge \rho(A_i, U_i)=\rho(\pi_{W_i}(A_i)) \ge n^{-C'_i}$, thus property (2) holds as well.
                
        Otherwise, condition (b) of \cref{one-step} holds, which gives us a subsequence $A_{i+1}$ with basis packing number at least $(\delta/(2k)^{i+1}) n$, and a subspace $U_{i+1} \subsetneq U_i$ such that 
        \[
        \rho(A_{i+1}, U_{i+1}) \ge n^{-C'_{i+1}}
        \]
        for some $C'_{i+1}$ depending only on $\delta, C'_{i}, C_1, d, k$. This allows us to proceed to the next step of the process.
    \end{proof}

    \section{Proofs of the main results}
    \label{sec:main-proofs}

    In this section we prove a general result (\cref{varieties-general} below), which allows us to reduce Littlewood--Offord-type statements about estimating $\rho(A, S)$ for an algebraic variety $S$ to questions about counting lattice points on $S$ (in the sense of the integer point density function from \cref{def:density}). We will then show how to apply \cref{varieties-general} along with the number-theoretic results presented in \cref{sec:number-theory} to derive \cref{varieties-1/d,varieties-(k-ell)/2,polynomials-all}. After that, we deduce \cref{polynomials-1/d,semialgebraic} from \cref{varieties-1/d}.
    
    \begin{theorem} \label{varieties-general}
        Let $S \subseteq \CC^k$ be a variety of dimension at most $\ell$ and degree at most $d$. Consider a sequence $A$ of vectors in $\CC^k$ with basis packing number at least $b \ge 2$. Then there exists $r = O_{d, k}(1)$ such that
        \[
        \rho(A, S) \le O_{d, k}\left(\left(d_{S}(\sqrt{b \log b}) + (b \log b)^{-(k-\ell+1)/2}\right) \cdot (\log b)^r \right).
        \]
    \end{theorem}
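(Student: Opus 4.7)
The plan is to invoke \cref{structure-randomness} to decompose the problem into a ``structured'' part (handled via \cref{lattice-points-reduction}) and a ``disordered'' part (whose contribution is already controlled by the error term produced by \cref{structure-randomness}). Since \cref{structure-randomness} is formulated for sequences whose length is linear in the basis packing number, I would first restrict to the subsequence $A^{(0)}$ of $A$ consisting of $b$ disjoint bases of $\CC^k$: this has length $n_0 := bk$ and basis packing number $b$, and by \cref{rho(A S)-1} satisfies $\rho(A, S) \le \rho(A^{(0)}, S)$.

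Applying \cref{structure-randomness} to $A^{(0)}$ with $\delta = 1/k$ and a sufficiently large constant in place of ``$C_1$'' yields a decomposition $\CC^k = U \oplus W$, a subvariety $S' \subseteq W$ of degree at most $d^k$, and a subsequence $A'$ of $A^{(0)}$ satisfying properties (1)--(3) of that theorem. Combining \cref{rho(A S)-1,rho(A S)-2} with property (3) gives
\[
\rho(A^{(0)}, S) \le \rho(A', \pi_W^{-1}(S')) + \rho(A', S \setminus \pi_W^{-1}(S')) \le \rho(A', \pi_W^{-1}(S')) + O_k(b^{-C_1}),
\]
where $\pi_W: \CC^k \to W$ denotes the projection. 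The error term can be absorbed into the $(b \log b)^{-(k-\ell+1)/2} (\log b)^r$ part of the target bound (by choosing $C_1$ slightly larger than $(k-\ell+1)/2$ and inflating $r$ accordingly).

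For the main term, \cref{rho(A S)-3} gives $\rho(A', \pi_W^{-1}(S')) = \rho(\pi_W(A'), S')$. Since $\pi_W$ is surjective, each basis of $\CC^k$ inside $A'$ projects to a spanning set of $W$ from which a basis of $W$ can be extracted, so the basis packing number of $\pi_W(A')$ in $W$ is at least that of $A'$, which by property (1) is $\Theta_k(b) = \Theta_k(m)$ with $m := |\pi_W(A')|$. Together with property (2) (which gives $\rho(\pi_W(A')) \ge n_0^{-C} \ge m^{-C'}$ for some $C' = O_{d,k}(1)$, since $m = \Theta_k(n_0)$), this verifies the hypotheses of \cref{lattice-points-reduction}. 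Applying it to $\pi_W(A')$ in $W$ with the set $S'$ yields
\[
\rho(\pi_W(A'), S') \le O_{d,k}\!\left( d_{S'}(\sqrt{m \log m}) \cdot (\log m)^r + m^{-(k-\ell+1)/2} \right)
\]
for some $r = O_{d,k}(1)$. To finish, \cref{slicing} gives $d_{S'}(B) = d_{\pi_W^{-1}(S')}(B)$, and since the density function is monotone in its set argument and $\pi_W^{-1}(S') \subseteq S$, this is at most $d_S(B)$. Finally, \cref{no-jumps} lets me replace $\sqrt{m \log m}$ by $\sqrt{b \log b}$ at the cost of a $\Theta_k(1)$ factor, since $m = \Theta_k(b)$; collecting all terms gives the claimed estimate.

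I do not expect a deep obstacle here: \cref{structure-randomness,lattice-points-reduction} do the heavy lifting, and the proof essentially consists of composing them via the projection $\pi_W$. The main care required is parameter bookkeeping, chiefly verifying that the polynomial lower bound on the point probability survives when passing from $A^{(0)}$ to $\pi_W(A')$, and that the density function $d_{S'}$ (computed in $W$) is controlled by $d_S$ (computed in $\CC^k$) through the inclusion $\pi_W^{-1}(S') \subseteq S$.
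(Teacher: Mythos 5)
Your proposal is correct, and its skeleton---restricting to the $b$ disjoint bases, applying \cref{structure-randomness} with $\delta=1/k$, splitting $\rho(A',S)$ along $\pi_W^{-1}(S')$, applying \cref{lattice-points-reduction} to $\pi_W(A')$ and $S'$, and using \cref{no-jumps} to pass from $\sqrt{m\log m}$ to $\sqrt{b\log b}$---is exactly the paper's. The one place where you genuinely diverge is the final comparison of $d_{S'}$ with $d_S$: you combine \cref{slicing} with the observation that the density function is monotone under set inclusion (valid, since $N_{\varphi(T)}(B)\le N_{\varphi(S)}(B)$ for every bijective affine $\varphi$ when $T\subseteq S$), so that $d_{S'}=d_{\pi_W^{-1}(S')}\le d_S$ unconditionally. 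The paper instead first reduces to irreducible $S$ (via \cref{rho(A S)-2} and \cref{unions}) and then splits into two cases: if $\pi_W^{-1}(S')=S$ it gets $d_{S'}=d_S$ from \cref{slicing}, while if $\pi_W^{-1}(S')\subsetneq S$ it uses irreducibility and \cref{projections} to force $\codim S'\ge k-\ell+1$ and bounds $d_{S'}$ by the Schwartz--Zippel lemma (\cref{trivial-bound}); that second case is precisely the origin of the $(b\log b)^{-(k-\ell+1)/2}$ term in the statement. Your monotonicity shortcut avoids both the case analysis and the reduction to irreducible components, and in fact yields the marginally stronger conclusion in which only the $d_S(\sqrt{b\log b})\cdot(\log b)^r$ term (plus negligible $b^{-O_{k}(1)}$ errors from the two black boxes) appears, which of course implies the stated bound. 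The remaining bookkeeping in your write-up---the basis packing number of $\pi_W(A')$ being at least that of $A'$, transferring $\rho(\pi_W(A'))\ge n_0^{-C}$ to $\ge m^{-C'}$ using $m=\Theta_k(n_0)$, and absorbing the error terms by taking the constants ``$C_1$'' of order $k$---is sound and matches the paper's.
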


    In comparison to \cref{lattice-points-reduction}, note that \cref{varieties-general} does not require that $\rho(A) \ge n^{-C}$, but instead requires that $S$ is an algebraic variety of bounded degree (and it gives a slightly worse bound).
    
    \begin{proof}[\textbf{Proof of \cref{varieties-general}}]
        Let $S_1, \ldots, S_t$ (where $t \le d$) be the irreducible components of $S$. By \cref{rho(A S)-2}, we have $\rho(A, S) \le \sum_{j = 1}^t \rho(A, S_j)$. Therefore, by treating each irreducible component separately, we may assume that $S$ is irreducible.

        We have a sequence $A$ of vectors in $\CC^k$ with basis packing number at least $b$. Consider a subsequence $A_0$ of $A$, which contains only the vectors of the $b$ bases. It has size $m = bk$ and basis packing number equal to $b$. 
        Applying \cref{structure-randomness} to the sequence $A_0$ and the variety $S$ with $\delta = 1/k$ and $C_1 = k+1$, we obtain a decomposition $\CC^k = U \oplus W$, a variety $S' \subseteq W$ of degree at most $d^k$, and a subsequence $A'$ satisfying the following conditions: 
        \begin{enumerate}
            \item[(1)] The basis packing number of $A'$ is at least $\a m$ for some $\a = \a(k) > 0$;
            \item[(2)] Let $\pi_W:\CC^k \to W$ be the projection map. Then for some $C = C(d, k)$ we have $\rho(\pi_W(A')) \ge m^{-C}$;
            \item[(3)] $\pi_W^{-1}(S') \subseteq S$, and $\rho(A', S \setminus \pi_W^{-1}(S')) \le m^{-(k+1)}$.
        \end{enumerate}
        Assuming that $b$ is sufficiently large with respect to $d$ and $k$, condition (3) guarantees that  
        \begin{equation} \label{eq:rho(A' S setminus pi_W^{-1}(S'))}
        \rho(A', S \setminus \pi_W^{-1}(S')) \le m^{-(k+1)} \le b^{-(k+1)} \le (b \log b)^{-(k-\ell+1)/2} (\log b)^r.
        \end{equation}        
        By \cref{rho(A S)-1,rho(A S)-2},
        \[
        \rho(A, S) \le \rho(A', S) \le \rho(A', \pi_W^{-1}(S')) + \rho(A', S \setminus \pi_W^{-1}(S')).
        \]
        Then, by \cref{eq:rho(A' S setminus pi_W^{-1}(S'))}, the second summand $\rho(A', S \setminus \pi_W^{-1}(S'))$ is small compared to the desired upper bound. Therefore, it is sufficient to focus on the first summand $\rho(A', \pi_W^{-1}(S'))$. \cref{rho(A S)-3} implies that
        \[
        \rho(A', \pi_W^{-1}(S')) = \rho(\pi_W(A'), S').
        \]
        
        By condition (1), the basis packing number of the sequence $A'$ is at least $\a m$, thus the same holds for $\pi_W(A')$ (as a sequence of vectors in $W$). Let $m'$ be the size of $A'$, so we have $\a m \le m' \le m$. Condition (2) then implies that $\rho(\pi_W(A')) \ge m^{-C} \ge (m')^{-C'}$ for some constant $C' = C'(C, k)$. Therefore, we can apply \cref{lattice-points-reduction} to the sequence $\pi_W(A')$ and the variety $S'$ with $\FF = \CC$, $\delta = \a$ and $C_1 = k+1$. As a result, we obtain a positive integer $r = O_{d, k}(1)$ such that
        \[
            \rho(\pi_W(A'), S') \le O_{d, k}\left(d_{S'}(\sqrt{m' \log m'}) \cdot (\log m')^r + (m')^{-(k+1)}\right).
        \]     
        Since $\a b \le \a m \le m' \le m = bk$ and the density function does not ``jump too much'' by \cref{no-jumps}, we conclude that
        \begin{equation} \label{eq:d_{S'}(sqrt{m log m})}
        \rho(\pi_W(A'), S') \le O_{d, k}\left(d_{S'}(\sqrt{b \log b}) \cdot (\log b)^r + b^{-(k+1)}\right).
        \end{equation}        
        Again, $b^{-(k+1)}$ is small compared to the desired upper bound. The expression in the first summand is quite similar to the one in the statement of the theorem, except that it involves the density function of $S'$ instead of $S$. We consider two cases depending on whether $\pi_W^{-1}(S') = S$ or not.
        
        First, suppose that $\pi_W^{-1}(S') = S$. Then, by \cref{slicing}, we have $d_{S'} = d_S$. Substituting this into \cref{eq:d_{S'}(sqrt{m log m})} gives the desired bound, completing the proof in this case.
        
        Otherwise, we have $\pi_W^{-1}(S') \subsetneq S$. As $S$ is irreducible, we combine this with \cref{projections} to conclude that
        \[
        \codim S' = \codim(\pi_W^{-1}(S')) \ge \codim S + 1 \ge k-\ell+1.
        \]
        Recall that degree of $S'$ is at most $d^k$. Then from the Schwartz--Zippel lemma (\cref{trivial-bound}) we have
        \[
        d_{S'}(\sqrt{b \log b}) \le O_{d, k}\Big((b \log b)^{-(k-\ell+1)/2}\Big).
        \]
        Again, substituting this into \cref{eq:d_{S'}(sqrt{m log m})} completes the proof.
    \end{proof}

    \begin{proof}[\textbf{Proof of \cref{varieties-1/d}}]
        The probability $\prob{\xi_1 a_1 + \ldots + \xi_n a_n \in S}$ is bounded by $\rho(A, S)$, and we would like to prove that
        \[
        \rho(A, S) \le O_{d, k}\big(b^{-(k-\ell+1-1/d)} (\log b)^{C_{d, k}}\big).
        \]
        By \cref{varieties-general}, for some $r = O_{d, k}(1)$ we have
        \begin{equation} \label{eq:d_S(sqrt{b log b})}
        \rho(A, S) \le O_{d, k}\left(\left(d_{S}(\sqrt{b \log b}) + (b \log b)^{-(k-\ell+1)/2}\right) \cdot (\log b)^r\right),
        \end{equation}
        Recall that $S \subseteq \CC^k$ is an irreducible variety of dimension $\ell$ and degree $d$. Then, by Pila's bound (\cref{Pila}),
        \[
        d_{S}(\sqrt{b \log b}) = \sup_{\varphi}\left(\frac{N_{\varphi(S)}(\sqrt{b \log b})}{(2 \lfloor\sqrt{b \log b}\rfloor + 1)^k}\right) \le O_{d, k}\left(b^{-(k-\ell+1-1/d)/2} (\log b)^{C_{d}-(k-\ell+1-1/d)/2}\right).
        \]
        Substituting this into \cref{eq:d_S(sqrt{b log b})} gives $\rho(A, S) \le O_k\left(b^{-(k-l+1-1/d)/2} (\log b)^{C_d+r-(k-l+1-1/d)/2}\right)$, completing the proof.
    \end{proof}

    Next, we deduce \cref{varieties-(k-ell)/2} by combining \cref{varieties-1/d} with the following result of Ferber, Jain and Zhao \cite{ferber-jain-zhao-22} (which is a refined version of Hal\'asz' theorem \cite{halasz-77}). 

    \begin{theorem}[{\cite[Theorem 1.11]{ferber-jain-zhao-22}}] \label{halasz}
        Let $A$ be a sequence of vectors in $\RR^k$, and let $I_1, \ldots, I_{s}$ (for some even $s$) be a partition of $[n]$. Denote $t := \frac{1}{s}\sum_{j = 1}^s \dim_\RR \Span_{\RR}\{a_i : i \in I_j\}$. Then
        \[
        \sup_{x \in \RR^k} \prob{\xi_1 a_1 + \ldots + \xi_n a_n = x} \le \left(2^{-s}\binom{s}{s/2}\right)^{t}. 
        \]
    \end{theorem}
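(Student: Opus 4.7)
The plan is to follow the standard template for Halász-type theorems: iterated Cauchy–Schwarz applied to $\prob{X=x}$ (where $X=\xi_1 a_1+\dots+\xi_n a_n$) to reduce the problem to estimating the vanishing probability of a sum of iid integer-weighted copies of the $a_i$'s, and then use the partition structure to extract the claimed bound. The inequality should be viewed as interpolating between the classical Erdős–Littlewood–Offord theorem (the case $d_j=1$ for all $j$, which recovers a sum of $s$ Rademachers of maximum point probability $2^{-s}\binom{s}{s/2}$) and a multidimensional version where each block $I_j$ contributes ``$d_j$ independent coordinates worth'' of anticoncentration.

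First, I would reduce to the clean case where each $I_j$ is a basis of $W_j:=\Span_{\RR}\{a_i:i\in I_j\}$. This is a standard conditioning step: within each $I_j$, fix the Rademacher signs at a maximal subset of ``dependent'' indices, retaining exactly $d_j$ indices corresponding to a basis of $W_j$. By \cref{rho(A S)-1} (monotonicity of $\rho$ under passing to subsequences), this conditioning can only \emph{increase} $\sup_x \prob{X=x}$, so it suffices to prove the bound in the reduced setting. After this step, each $X_j:=\sum_{i\in I_j}\xi_i a_i$ is uniform on the $2^{d_j}$ vertices of a nondegenerate parallelepiped in $W_j$.

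Next, iterate Cauchy–Schwarz $m=\log_2 s$ times, assuming $s$ is a power of 2 (otherwise replace $s$ by the nearest power of 2). At each step, $\sum_x \prob{Y=x}^2=\prob{Y=Y'}$ for $Y'$ an independent copy. A direct induction yields the identity
\[
p^s \;\le\; \prob{\textstyle\sum_i Z_i a_i = 0},
\]
where $p:=\sup_x \prob{X=x}$ and $Z_1,\dots,Z_n$ are iid, each distributed as a sum of $s$ iid Rademachers (so in particular $\prob{Z_i=0}=2^{-s}\binom{s}{s/2}$, and the other atoms are smaller). It then remains to show
\[
\prob{\textstyle\sum_i Z_i a_i = 0}\;\le\; \bigl(2^{-s}\textstyle\binom{s}{s/2}\bigr)^{st},
\]
which, after taking $s$-th roots, gives exactly the theorem.

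The hard part will be this final bound. Expand $\prob{\sum_i Z_i a_i=0}=\sum_{z\in\ZZ^n,\sum z_i a_i=0}\prod_i \prob{Z_i=z_i}$: within each block $j$, linear independence of $(a_i)_{i\in I_j}$ means the block contribution $\sum_{i\in I_j}Z_ia_i$ is zero iff all the $Z_i$ there vanish, which would yield precisely a factor of $(2^{-s}\binom{s}{s/2})^{d_j}$ per block; however, when the $W_j$'s overlap in $\RR^k$, block contributions can cancel against each other, so the naive product bound fails (and is in fact violated by simple overlapping examples). The clean way to resolve this is to pass to characteristic functions: writing $|\phi_X(v)|=\prod_i|\cos\langle v,a_i\rangle|$ and using the Gaussian-type upper bound $\cos^s(\theta)\lesssim e^{-cs\theta^2}$, one can express the quantity of interest via a Fourier integral/Plancherel-type identity and evaluate the resulting Gaussian integral over a fundamental domain. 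The key invariant is the quadratic form $M=\sum_i a_i a_i^\top$, whose relevant ``effective determinant'' across the partition decomposes as $\prod_j \det(M|_{W_j})$ — so the integral factorizes (up to controlled constants) as a product over blocks, producing exactly one factor of $(2^{-s}\binom{s}{s/2})^{d_j}$ per $j$. Executing this Fourier argument carefully, and ensuring the factorization is tight enough to yield the stated constant, is the main technical obstacle.
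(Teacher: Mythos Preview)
First, note that the paper does not prove \cref{halasz}: it is quoted as \cite[Theorem 1.11]{ferber-jain-zhao-22} and used as a black box, so there is no proof here to compare against. Evaluating your proposal on its own terms, there is a genuine gap. Your Step~2 (iterated Cauchy--Schwarz) is fine when $s=2^m$ and does yield $p^s\le\prob{\sum_i Z_ia_i=0}$ with each $Z_i$ a sum of $s$ independent Rademachers. But the inequality you then set out to prove in Step~3,
\[
\prob{\textstyle\sum_i Z_i a_i = 0}\;\le\;\bigl(2^{-s}\textstyle\binom{s}{s/2}\bigr)^{st},
\]
is \emph{false}. Take $k=1$, $s=2$, $a_1=a_2=1$ with $I_1=\{1\}$, $I_2=\{2\}$ (so $d_1=d_2=1$, $t=1$, and your Step~1 reduction changes nothing). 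Then each $Z_i\in\{-2,0,2\}$ with probabilities $1/4,1/2,1/4$, giving $\prob{Z_1+Z_2=0}=2(1/4)^2+(1/2)^2=3/8$, whereas your target right-hand side is $(1/2)^2=1/4$. Equivalently, Step~2 only yields $p\le\sqrt{3/8}\approx 0.612$, but the theorem (which is tight in this example) asserts $p\le 1/2$. No Fourier or Gaussian computation can close this gap, since the inequality you are aiming for is already violated; in particular your proposed ``factorization over blocks'' cannot hold here, because $W_1=W_2=\RR$ coincide and the cross-block cancellations are exactly what produce the excess $3/8-1/4$.

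The conceptual problem is that iterated Cauchy--Schwarz erases the partition: after the $\log_2 s$ squarings the $Z_i$ are iid and the $a_i$ enter completely symmetrically, so the information ``which indices lie in the same block $I_j$'' is gone. But the exponent $t=\tfrac{1}{s}\sum_j d_j$ is a feature of the partition, not of the multiset $\{a_i\}$, and any correct argument must exploit the block structure before (or instead of) such a lossy symmetrization. (As a separate minor issue, ``replace $s$ by the nearest power of $2$'' does not preserve the exact constant $2^{-s}\binom{s}{s/2}$ for general even $s$.)
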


    \begin{proof}[\textbf{Proof of \cref{varieties-(k-ell)/2}}]
        Let $S_1$ be an irreducible component of $S$. If $\deg S_1 \ge 2$, we apply \cref{varieties-1/d} to conclude that
        \[
        \prob{\xi_1 a_1 + \ldots + \xi_n a_n \in S_1} \le O_{d, k}\left(b^{-(k-\dim S_1 +1-1/\deg S_1)/2} (\log b)^{C}\right).
        \]
        for some constant $C$ depending only on $\deg S_1$ and $k$. Since 
        \[
        \dim S_1 \le \dim S \le \ell \quad \text{ and } \quad 2 \le \deg S_1 \le \deg S \le d,
        \] 
        this bound is at most $O_{d, k}(b^{-(k-\ell)/2})$. This completes the proof in this case.

        If $\deg S_1 = 1$ then $S_1$ is an affine subspace of dimension at most $\ell$ (by enlarging it, we may assume that its dimension is exactly $\ell$). A minor technical issue we need to handle is that \cref{halasz} is stated only for point probabilities and only over the field $\RR$. 
        
        Let $V$ be the linear subspace which is a translate of $S_1$, and let $\pi:\CC^k \to \CC^k/V \simeq \CC^{k-\ell} \simeq \RR^{2(k-\ell)}$ be the quotient map. By assumption, there is a  partition $I_1, \ldots, I_{b_1}$ of $[n]$ with $b_1 = 2\lfloor b/2 \rfloor \le b$, such that each subset of vectors $\{a_i : i \in I_j\}$ contains a basis of $\CC^k$. Then, for any $1 \le j \le b_1$,
        \[
        \dim_{\RR}\Span_{\RR}\{\pi(a_i) : i \in I_j\} \ge \dim_{\CC}\Span_{\CC}\{\pi(a_i) : i \in I_j\} = k-\ell.
        \]
        Applying \cref{halasz} to this partition, we conclude that
        \[
        \prob{\xi_1 a_1 + \ldots + \xi_n a_n \in S_1} \le \sup_{x \in \CC^k/V} \prob{\xi_1 \pi(a_1) + \ldots + \xi_n \pi(a_n) = x} \le \left(2^{-b_1}\binom{b_1}{b_1/2}\right)^{k-\ell} \le O(b^{-(k-\ell)/2}).
        \]
        Summing over all irreducible components of $S$ (there are at most $d$ of them) completes the proof.
    \end{proof}

    Next, we deduce \cref{polynomials-all,polynomials-1/d,semialgebraic} from the results proved previously in this section. All these deductions share the same first step, which is an application of the ``dropping to a subspace'' lemma (\cref{dropping-to-subspace}).
    
    \begin{proof}[\textbf{Proof of \cref{polynomials-all}}]
        As $F$ has Chow rank at most $c$, it can be written as 
        \[
        F(t_1, \ldots, t_n) = f(L_1(t_1, \ldots, t_n), \ldots, L_k(t_1, \ldots, t_n))
        \]        
        for $k = dc$, some $f \in \CC[x_1, \ldots, x_k]$ and homogeneous linear forms $L_1, \ldots, L_k$. Suppose that the form $L_i$ is given by $a_{i1} t_1 + \ldots + a_{in} t_n$ for some coefficients $a_{ij} \in \CC$. Denoting $a_j = (a_{1j}, \ldots, a_{kj}) \in \CC^k$, we have
        \[
        F(t_1, \ldots, t_n) = f(t_1 a_1 + \ldots + t_n a_n).
        \]
        Let $b_0 = \lfloor b/(k(k+1)) \rfloor + 1$. Then by \cref{dropping-to-subspace} applied to the sequence $A = (a_1, \ldots, a_n)$, there exists a subsequence $A' = A[I_0]$ of size at least
        \[
        n - (b_0-1)\frac{k(k+1)}{2} \ge n - b/2,
        \]
        and a subspace $V' \subseteq \CC^k$ (of dimension $k' \le k$) such that all the elements of $A'$ lie in $V'$, and $A'$ has basis packing number at least $b_0$ (as a sequence of vectors in $V'$). 

        Let $I_1 = [n]\setminus I_0$, so $|I_1| \le b/2 < b$. It suffices to show that for an arbitrary outcome of the Rademacher random variables $(\xi_i)_{i \in I_1}$, if we condition on $(\xi_i)_{i \in I_1}$ taking this particular outcome, then the desired bounds on $\prob{F(\xi_1,\dots,\xi_n)=0}$ hold in the resulting conditional probability space. In other words, let $F_*$ be a polynomial obtained by an arbitrary substitution of $\pm 1$ instead of the variables $(t_i)_{i \in I_1}$; it suffices to prove the desired bounds with ``$F_*$'' in place of ``$F$''.
        
        Note that we can write
        \begin{equation} \label{eq:poly-all-0}
        F_*((t_i)_{i \in I_0}) = f_*\left(\sum_{i \in I_0} t_i a_i\right)
        \end{equation}
        for some polynomial $f_*$ defined on $V'$ (one can take $f_*(x)$ to be a restriction of $f(x + x_0)$ to $V'$, for certain $x_0 \in \CC^k$). 
        Therefore, we need to estimate the probability that $f_*\left(\sum_{i \in I_0} \xi_i a_i\right) = 0$.

        Let $S \subseteq V' \simeq \CC^{k'}$ be the variety defined by $f_*$.
        \begin{enumerate}
            \item [(1)] First, we prove \cref{polynomials-all}(1). By our assumption, $F_*$ is not identically zero. Since $b_0 \ge 1$, the vectors $(a_i)_{i \in I_0}$ span the vector space $V'$, and thus the polynomial $f_*$ is also not identically zero. By \cref{one-polynomial} (applied to each irreducible factor of $f_*$ over $\CC$) combined with \cref{unions}, $S$ has dimension $k'-1$ and degree at most $\deg f_* \le \deg f = d$.

            As the basis packing number of the sequence $A'$ is at least $b_0$, from \cref{varieties-(k-ell)/2} we obtain that
            \[
            \prob{f_*\left(\sum_{i \in I_0} \xi_i a_i\right) = 0} = \prob{\sum_{i \in I_0} \xi_i a_i \in S} \le O_{d, k}(b_0^{-1/2}).
            \]
            Since $b_0 = \Omega_k(b)$, this completes the proof.
            
            \item[(2)] Next, we prove \cref{polynomials-all}(2). We may assume that $b > 2d$ (otherwise, the desired probability bound is trivial). Let $F^{=d}_{*}$ be the homogeneous degree-$d$ part of $F_{*}$. The polynomial $F_*$, in turn, was obtained from $F$ by substitution of $\pm1$'s instead of at most $b/2$ of its variables. Observe that each variable of $F$ is part of at most $n^{d-1}$ monomials of degree $d$, and that there are at most $dn^{d-1}$ monomials of degree less than $d$ in total. Therefore, $F^{=d}_{*}-F$ has at most $(b/2+d)n^{d-1} < bn^{d-1}$ nonzero coefficients.
            So, by our assumption on $F$, the polynomial $F^{=d}_{*}$ is irreducible (in particular, it is not zero). Recalling $\cref{eq:poly-all-0}$, we conclude that the homogeneous degree-$d$ part $f^{=d}_*$ of $f_*$ is also irreducible and nonzero.
            
            So, $f_*$ is an irreducible polynomial of degree $d$. Moreover, we note that $f_*$ cannot be written as a polynomial of two linear forms. Indeed, otherwise $f_*^{=d}$ can be represented as $g(L_1, L_2)$ for a homogeneous polynomial $g \in \CC[x_1, x_2]$ and two homogeneous linear forms $L_1, L_2$. Recall that $\CC$ is algebraically closed, and let $r_1, \ldots, r_{d_*}$ be the complex roots of $g(1, z)$. Then
            \[
            g(L_1, L_2) = L_1^d g(1, L_2/L_1) = z_0 L_1^d \prod_{j = 1}^{d_*}(L_2/L_1 - r_j) = z_0 L_1^{d-d_*} \prod_{j = 1}^{d_*}(L_2 - r_j L_1) \quad \text{ (for some $z_0 \in \CC$)}.
            \]
            So, $f_*^{=d}$ splits into a product of linear forms. As $d \ge 1$, this contradicts its irreducibility.
            
            Since the variety $S \subseteq \CC^{k'}$ is defined by the irreducible polynomial $f_*$ of degree $d$, it has dimension $k'-1$ and degree $d$ by \cref{one-polynomial}. Recall that the basis packing number of the sequence $A'$ is at least $b_0$. Applying \cref{varieties-general} to $A'$ and $S$, we obtain that
            \begin{equation} \label{eq:poly-all-1}
                \prob{f_*\left(\sum_{i \in I_0} \xi_i a_i\right) = 0} = \prob{\sum_{i \in I_0} \xi_i a_i \in S} \le O_{d, k}\left(\left(d_{S}(\sqrt{b_0 \log b_0}) + (b_0 \log b_0)^{-1}\right) \cdot (\log b_0)^r \right)
            \end{equation}
            for some $r = O_{d, k}(1)$. 
            Since $f_*$ is an irreducible polynomial of degree $d \neq 3$ which cannot be represented as a polynomial of two linear forms, we can estimate the density function $d_S$ using the result of Vermeulen and Browning--Gorodnik (\cref{affine-dimension-growth}). Namely, we conclude that for any $\eps > 0$
            \[
            d_S\left(\sqrt{b_0 \log b_0}\right) \le O_{d, k, \eps}\left(b_0^{-1+\eps}\right).
            \]
            Since $b_0 = \Omega_k(b)$, substituting this into \cref{eq:poly-all-1} completes the proof. \qedhere
        \end{enumerate}
    \end{proof}
    \begin{remark}
        In fact, the proof of \cref{polynomials-all}(2) implies the following slightly stronger statement. Let $F \in \CC[t_1, \ldots, t_n]$ be a polynomial of degree $d \neq 3$ and Chow rank at most $c$. Suppose that after any substitution of $\pm 1$'s instead of fewer than $b$ variables of $F$, the resulting polynomial is irreducible of degree $d$ and cannot be written as a polynomial of two linear forms. Then for any $\eps > 0$ we have $\prob{F(\xi_1, \ldots, \xi_n) = 0} = O_{d, k, \eps}(b^{-1+\eps})$.
    \end{remark}

    \begin{proof}[\textbf{Proof of \cref{polynomials-1/d}}]
        The first half of the proof is almost identical to the first half of the proof of \cref{polynomials-all}. As $F$ has Chow rank at most $c$ (over $\FF$), it can be written as 
        \[
        F(t_1, \ldots, t_n) = f(L_1(t_1, \ldots, t_n), \ldots, L_k(t_1, \ldots, t_n))
        \]        
        for $k = dc$, some $f \in \FF[x_1, \ldots, x_k]$ and homogeneous linear forms $L_1, \ldots, L_k$ with coefficients in $\FF$. Suppose that the form $L_i$ is given by $a_{i1} t_1 + \ldots + a_{in} t_n$ with $a_{ij} \in \FF$. Denoting $a_j = (a_{1j}, \ldots, a_{kj}) \in \FF^k$, we have
        \[
        F(t_1, \ldots, t_n) = f(t_1 a_1 + \ldots + t_n a_n).
        \]
        Let $b_0 = \lfloor b/(k(k+1)) \rfloor + 1$. Then by \cref{dropping-to-subspace} applied to the sequence $A = (a_1, \ldots, a_n)$, there exists a subsequence $A' = A[I_0]$ of size at least $n - b/2$ and a subspace $V' \subseteq \FF^k$ (of dimension $k' \le k$) such that all the elements of $A'$ lie in $V'$, and $A'$ has basis packing number at least $b_0$ (as a sequence of vectors in $V'$). 

        Let $I_1 = [n]\setminus I_0$, $|I_1| \le b/2 < b$. It suffices to show that for an arbitrary outcome of the Rademacher random variables $(\xi_i)_{i \in I_1}$, if we condition on $(\xi_i)_{i \in I_1}$ taking this particular outcome, then the desired bounds on $\prob{F(\xi_1,\dots,\xi_n)=0}$ hold in the resulting conditional probability space. In other words, let $F_*$ be a polynomial obtained by an arbitrary substitution of $\pm 1$ instead of variables $(t_i)_{i \in I_1}$; it suffices to prove the desired bounds with ``$F_*$'' in place of ``$F$''.
    
        Note that we can write
        \begin{equation} \label{eq:poly-all-0}
        F_*((t_i)_{i \in I_0}) = f_*\left(\sum_{i \in I_0} t_i a_i\right)
        \end{equation}
        for some polynomial $f_*$ with coefficients in $\FF$ defined on $V'$ (one can take $f_*(x)$ to be a restriction of $f(x + x_0)$ to $V'$, for certain $x_0 \in \FF^k$). 
        Therefore, we need to estimate the probability that $f_*\left(\sum_{i \in I_0} \xi_i a_i\right) = 0$.
        
        By our assumption, $F_*$ is irreducible (over $\FF$) of degree $d$. Since $b_0 \ge 1$, the vectors $(a_i)_{i \in I_0}$ span the vector space $V'$, and thus $f_*$ is also irreducible (over $\FF$) of degree $d$. First, we consider the case when $f_*$ is, furthermore, irreducible over $\CC$.

        As we have $V' \simeq \FF^{k'} \subseteq \CC^{k'}$, let $S \subseteq \CC^{k'}$ be the variety defined by $f_*$. By \cref{one-polynomial}, it is an irreducible variety of dimension $k'-1$ and degree $d$. As the basis packing number of $A'$ (as a sequence of vectors in $V'$) is at least $b_0$, we apply \cref{varieties-1/d} to conclude that
        \[
        \prob{f_*\left(\sum_{i \in I_0} \xi_i a_i\right) = 0} = \prob{\sum_{i \in I_0} \xi_i a_i \in S} \le O_{d, k}\left(b_0^{-1 + \frac{1}{2d}} (\log b_0)^{C_{d, k}}\right).
        \]
        Since $b_0 = \Omega_k(b)$, this completes the proof in this case.

        Next, we consider the case when $f_*$ is reducible over $\CC$. Let $g$ be any irreducible (over $\CC$) factor of $f_*$. Since $f_*$ is irreducible over $\FF$, $g$ is not proportional to a polynomial with coefficients in $\FF$. Thus, by \cref{galois-trick}, there exists a variety $T \subseteq \CC^{k'}$ of dimension at most $k'-2$ and degree at most $d^2$ such that $T \cap \FF^{k'} = S_g \cap \FF^{k'}$ (where $S_g \subseteq \CC^{k'}$ is the variety defined by $g$). Note that $\sum_{i \in I_0} \xi_i a_i$ always lies in $\FF^{k'}$. As the basis packing number of $A'$ is at least $b_0$, from \cref{varieties-(k-ell)/2} we conclude that
        \[
        \prob{g\left(\sum_{i \in I_0} \xi_i a_i\right) = 0} = \prob{\sum_{i \in I_0} \xi_i a_i \in S_g} = \prob{\sum_{i \in I_0} \xi_i a_i \in T} \le O_{d, k}(b_0^{-1}).
        \]        
        Since $b_0 = \Omega_k(b)$, this is at most $O_{d, k}(b^{-1})$. Taking the sum over all irreducible (over $\CC$) factors of $f_*$ completes the proof.
    \end{proof}

    Before proceeding to the proof of \cref{semialgebraic}, we record the following proposition about the Zariski closure of a semialgebraic set. Our main reference for properties of semialgebraic sets is the notes of Coste \cite{coste-00}.

    \begin{proposition} \label{semialgebraic-dimension}
        Let $S\subseteq \RR^k \subseteq \CC^k$ for $k \ge 1$ be a semialgebraic set which does not contain a line segment. Then the Zariski closure of $S$ in $\CC^k$ is a variety of dimension at most $k-1$ (equivalently, this Zariski closure is not the whole $\CC^k$).
    \end{proposition}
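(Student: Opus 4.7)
The plan is to argue by contradiction: suppose the complex Zariski closure $\overline{S}^{\CC} \subseteq \CC^k$ of $S$ equals all of $\CC^k$, and derive that $S$ contains a line segment.

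First, I would show that this assumption forces the \emph{real} Zariski closure $\overline{S}^{\RR} \subseteq \RR^k$ of $S$ to equal $\RR^k$. Indeed, any polynomial $f \in \RR[x_1, \ldots, x_k]$ vanishing on $S$ can be viewed as a polynomial in $\CC[x_1, \ldots, x_k]$ vanishing on $S$; its zero set in $\CC^k$ is then a complex variety containing $S$, hence it contains $\overline{S}^{\CC} = \CC^k$, so $f = 0$. Thus no nonzero real polynomial vanishes on $S$, meaning the real Zariski closure is all of $\RR^k$.

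Second, I would invoke the standard fact from real algebraic geometry (see, e.g., Coste's notes~\cite{coste-00}, Proposition 2.8.2) that the real dimension of a semialgebraic set $S \subseteq \RR^k$ coincides with the dimension of its real Zariski closure. This yields $\dim_{\RR} S = k$.

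Third, by the cylindrical algebraic decomposition (or equivalently the stratification of a semialgebraic set into smooth semialgebraic cells), a semialgebraic set of dimension $k$ in $\RR^k$ must contain at least one $k$-dimensional smooth cell, which is necessarily an open subset of $\RR^k$. Hence $S$ contains a nonempty open ball, which certainly contains line segments, contradicting the hypothesis.

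There is no real obstacle here: the proof is essentially a sequence of appeals to the standard dictionary between semialgebraic sets and their Zariski closures. The only mildly delicate point is the second step, comparing semialgebraic dimension to algebraic dimension, but this is a textbook result for which I would simply cite~\cite{coste-00}.
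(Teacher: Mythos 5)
Your proposal is correct. It shares the paper's two main ingredients -- the cell-decomposition argument (a full-dimensional cell in $\RR^k$ is, by invariance of domain, open in $\RR^k$ and hence contains a segment) and Coste's theorem that the dimension of a semialgebraic set equals the dimension of its real Zariski closure -- but it handles the passage from real to complex Zariski closure differently. The paper argues directly: it bounds the semialgebraic dimension of $S$ by $k-1$, transfers this to the real Zariski closure, and then invokes the general fact that the real algebraic dimension of $S_{\RR}$ equals the complex dimension of $S_{\CC}$ (justified via Krull dimension and Noether normalization). You instead argue by contradiction, so you only need the degenerate case of that comparison: if the complex closure were all of $\CC^k$, then no nonzero real polynomial vanishes on $S$, so the real closure is all of $\RR^k$ -- an elementary ideal-theoretic observation that sidesteps the general real/complex dimension equality. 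The trade-off is that your argument directly yields only the ``not all of $\CC^k$'' form of the conclusion, and you then rely on the (standard, and explicitly stated in the proposition) equivalence that a proper Zariski-closed subset of $\CC^k$ has dimension at most $k-1$; the paper's route gives the dimension bound for $S_{\CC}$ directly. Your citation for the dimension-of-Zariski-closure fact looks like it points to Bochnak--Coste--Roy numbering rather than Coste's notes, but the fact itself is the same one the paper uses, so this is only a bookkeeping issue.
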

    \begin{proof}
        The statement follows from these three facts:
        \begin{itemize}
        \item \cite[Proposition 3.15]{coste-00} The dimension of $S$ as a semialgebraic set is defined as the maximum dimension of a cell in its cell decomposition. A cell of dimension $k$ is homeomorphic to $(0, 1)^k$, and therefore contains a line segment. Therefore, the dimension of $S$ as a semialgebraic set is at most $k-1$.
        \item \cite[Theorem 3.20]{coste-00} The dimension of $S$ as a semialgebraic set is equal to the dimension of its \textit{real} Zariski closure $S_{\RR}$ (as a \textit{real} algebraic set).
        \item The dimension of $S_{\RR}$ as a \textit{real} algebraic set is equal to the dimension of its \textit{complex} Zariski closure $S_{\CC}$ (as a \textit{complex} algebraic variety). Since the dimension of a variety is equal to the Krull dimension of its coordinate ring \cite[Proposition 1.7]{hartshorne-77}, this can be seen (for example) from the Noether normalization lemma. \qedhere
        \end{itemize}
    \end{proof}

    \begin{proof}[\textbf{Proof of \cref{semialgebraic}}]
        Let $b_0 = \lfloor n/(k(k+1)) \rfloor + 1$. Then by \cref{dropping-to-subspace} applied to the sequence $A = (a_1, \ldots, a_n)$, there exists a subsequence $A' = A[I_0]$ of size at least $n/2$ and a subspace $V' \subseteq \RR^k$ such that all the elements of $A'$ lie in $V'$, and $A'$ has basis packing number at least $b_0$ (as a sequence of vectors in $V'$). Conditioning on the outcomes of the random variables $(\xi_i)_{i \in [n] \setminus I_0}$, we have
        \begin{equation} \label{eq:semialgebraic-1}
        \prob{\xi_1 a_1 + \ldots + \xi_n a_n \in S} \le \sup_{x \in \RR^k} \prob{\sum_{i \in I_0} \xi_i a_i \in (S-x) \cap V'}.
        \end{equation}        
        As $V' \subseteq \RR^k \subseteq \CC^k$, define $V'_{\CC} \subseteq \CC^k$ as be the minimal complex subspace containing $V'$. It satisfies $V'_{\CC} \cap \RR^k = V'$ and $\dim_{\CC} V_{\CC} = \dim_{\RR} V'$.
        
        Fix any $x \in \RR^k$. Note that $(S-x) \cap V'$ is a semialgebraic set which does not contain a line segment (since $S$ does not contain a line segment).
        Then, by \cref{semialgebraic-dimension}, the complex Zariski closure $S_{\CC} \subseteq V'_{\CC}$ of $(S-x)\cap V'$ has dimension at most $\dim V'_{\CC}-1$.         
        Recall that the basis packing number of the sequence $A'$ is at least $b_0$. Applying \cref{varieties-(k-ell)/2} to $A'$ and $S_{\CC}$, we conclude that
        \[
        \prob{\sum_{i \in I_0} \xi_i a_i \in (S-x) \cap V'} \le \prob{\sum_{i \in I_0}\xi_i a_i \in S_{\CC}} \le O_{\deg(S_{\CC}), k}(b_0^{-1/2}).
        \]
        Since $b_0 = \Omega_k(n)$, substituting this into \cref{eq:semialgebraic-1} completes the proof.
    \end{proof}

    \begin{remark}
        \label{semialgebraic-improved}
        The bound in \cref{semialgebraic} is sharp up to a multiplicative constant factor: if $S = \{0\}$, and $a_1 = \ldots = a_{2n} \in \RR^k\setminus\{0\}$, then $\prob{\xi_1 a_1 + \ldots + \xi_{2n} a_{2n} \in S} = \binom{2n}{n}/2^{2n} = \Theta(n^{-1/2})$. However, we sketch how this bound can be improved under additional conditions on the vectors $a_1, \ldots, a_n$.
        \begin{itemize}        
        \item Suppose that the vectors $a_1, \ldots, a_n \in \RR^k$ in \cref{semialgebraic} satisfy the following condition: for some $\delta > 0$ every line passing through the origin contains fewer than $(1-\delta)n$ of them (this happens, for example, when $k \ge 2$ and one can form $\delta n$ disjoint bases from these vectors). Then one can modify the proof above to ensure that $\dim V'_{\CC} \ge 2$. Since $S$ does not contain a line segment, a simple argument based on \cref{semialgebraic-dimension} implies that no irreducible component of its Zariski closure $S_{\CC} \subseteq V'_{\CC}$ is a hyperplane in $V'_{\CC}$. Therefore, applying \cref{varieties-1/d} instead of \cref{varieties-(k-ell)/2}, we obtain a stronger bound 
        \[
        \prob{\xi_1 a_1 + \ldots + \xi_n a_n \in S} \le O_{S, \delta}(n^{-3/4} (\log n)^{C_k}).
        \]
        \item Furthermore, suppose that the vectors $a_1, \ldots, a_n \in \RR^k$ satisfy the following condition: for some $\delta > 0$ every two-dimensional linear subspace contains fewer than $(1-\delta)n$ of them. In this case one can similarly ensure that $\dim V'_\CC \ge 3$, and that each irreducible component of the Zariski closure $S_\CC \subseteq V'_\CC$ either has codimension at least two, or is not a preimage of a curve under a linear map. Applying \cref{varieties-general} combined with the best known results on the affine dimension growth conjecture (which is settled for $d \neq 3$, but has only partial results for $d = 3$) stated for varieties ``not cylindrical over a curve'' \cite[Theorem 1.2]{vermeulen-24}, we can get a bound of roughly $O_{S, \delta}(n^{-0.92})$. 
        \end{itemize}
    \end{remark}

    \section{Concluding remarks}
In this paper we have introduced a general method to study certain geometric variants of the Littlewood--Offord problem via lattice point counting, and applied this method in several different contexts. There are a number of interesting directions for future research.

First, regarding the general polynomial Littlewood--Offord problem: one of our primary motivations to consider the bounded-rank setting was that this setting already seems to incorporate many of the most important difficulties of the general polynomial Littlewood--Offord problem. Indeed, the resolution of the quadratic Littlewood--Offord problem by Kwan and Sauermann~\cite{kwan-sauermann-23} was accomplished by first solving the bounded-rank case, and then adapting and quantifying the approach for the general case.

Unfortunately, it seems challenging to adapt the techniques in this paper to general polynomials (without a bound on the Chow rank). We remark that we do not really need the Chow rank to be $O(1)$: indeed, it should be straightforward to modify our proof of \cref{polynomials-all}(1) (by using \cref{varieties-1/d} and the Erd\H{o}s--Littlewood--Offord theorem instead of \cref{varieties-(k-ell)/2}) to show that there is a slowly growing function $h(n)$ such that when the Chow rank of $F$ is at most $h(n)$ then 
\[
\prob{F(\xi_1, \ldots, \xi_n) = 0} \le O_{d}(b^{-1/2}),
\]
with the implicit constant not depending on the Chow rank. However, $h(n)$ would definitely need to grow rather slowly (e.g., it seems significant new ideas would be required to handle Chow rank as large as $n^{0.01}$).

It may also be fruitful to consider different (``weaker'') notions of rank/complexity than Chow rank. Indeed, while there is only really one sensible notion of rank for quadratic polynomials, for polynomials of higher degree there are several fundamentally different notions of rank. One natural candidate that often arises in analytic number theory is the \emph{Schmidt rank} (also called \emph{$h$-invariant} or \emph{strength}): for a polynomial $F$ of degree $d$, its Schmidt rank is the smallest integer $s$ such that $F$ can be written as $\sum_{i = 1}^s P_i$, where each $P_i$ is a product of two polynomials of degree strictly less than $d$ (for a homogeneous polynomial of a fixed degree $d$, its Schmidt rank is equivalent to the so-called \emph{partition rank} of its coefficient tensor).

Finally, another interesting direction is to consider ``small-ball concentration'' probabilities instead of ``point concentration'' probabilities. Namely, assuming that ``sufficiently many'' coefficients of $F$ have absolute value at least $1$, one can sometimes obtain similar upper bounds on $\prob{|F(\xi_1, \ldots \xi_n)| \le 1}$ (see \cite{halasz-77,meka-nguyen-vu-16, ferber-jain-zhao-22}). It seems difficult to adapt our methods to this setting. One reason for this is that the decoupling techniques used in \cref{sec:decoupling} are not well-suited for small-ball probabilities. Another reason is that we are not aware of appropriate number-theoretic results (analogous to \cref{Pila}) sufficient to finish the proof in the small-ball setting.       

    \bibliographystyle{plain}

    \bibliography{references}

\end{document}